\documentclass[12pt]{amsart}
\usepackage{amssymb}
\usepackage{hyperref}
\usepackage[margin=1.25in]{geometry}
\usepackage[utf8]{inputenc}
\usepackage{chngcntr}
\usepackage{apptools}
\usepackage{thm-restate}
\usepackage{url}
\AtAppendix{\counterwithin{theorem}{section}}
\usepackage{filecontents}

\newtheorem{theorem}{Theorem}[]
\newtheorem{corollary}{Corollary}[]
\newtheorem{proposition}{Proposition}[section]
\newtheorem{lemma}[proposition]{Lemma}
\newtheorem{corollaryy}[proposition]{Corollary}
\newtheorem{claim}[proposition]{Claim}
\theoremstyle{definition}
\newtheorem{definitionn}[proposition]{Definition}
\theoremstyle{remark}
\newtheorem{remark}[proposition]{Remark}

\theoremstyle{definition}
\newtheorem{definition}{Definition}[]

\newcommand{\bb}{b}
\newcommand{\cc}{c}

\newcommand{\G}{\mathcal{G}}
\newcommand{\Gin}{\mathcal{G}_{\infty}}
\newcommand{\origin}{\textbf{\textit{o}}}









\newcommand{\R}{\mathbb{R}}  





\begin{document}


\title{Ricci flow of warped Berger metrics on $\mathbb{R}^{4}$}


\author{Francesco Di Giovanni}
\address{Department of Mathematics, University College London, Gower Street, London, WC1E 6BT, United Kingdom} 
\email{francesco.giovanni.17@ucl.ac.uk}




\begin{abstract}
We study the Ricci flow on $\R^{4}$ starting at an SU(2)-cohomogeneity 1 metric $g_{0}$ whose restriction to any hypersphere is a Berger metric. We prove that if $g_{0}$ has no necks and is bounded by a cylinder, then the solution develops a global Type-II singularity and converges to the Bryant soliton when suitably dilated at the origin. This is the first example in dimension $n > 3$ of a non-rotationally symmetric Type-II flow converging to a rotationally symmetric singularity model. Next, we show that if instead $g_{0}$ has no necks, its curvature decays and the Hopf fibres are not collapsed, then the solution is immortal. Finally, we prove that if the flow is Type-I, then there exist minimal 3-spheres for times close to the maximal time.

\end{abstract}

\maketitle


\section{Introduction}
Given a smooth Riemannian manifold $(M,g_{0})$, Hamilton's Ricci flow starting at $g_{0}$ is defined to be the geometric heat-type evolution equation \cite{threemanifolds}
\[
\frac{\partial g}{\partial t} = -2\text{Ric}_{g(t)}, \,\,\,\,\,\,\, g(0) = g_{0}.
\]
\noindent Shi proved that if $(M,g_{0})$ is complete and has bounded curvature, then the Ricci flow problem admits a solution \cite{shi}. Moreover, such solution is unique in the class of complete solutions with bounded curvature by the work of Chen and Zhu \cite{uniqueness}. A solution to the Ricci flow encounters a finite-time singularity at some $T < \infty$ if and only if \cite{formationsingularities}, \cite{shi}
\[
\limsup_{t\nearrow T}\,\sup_{M}\lvert\text{Rm}_{g(t)}\rvert_{g(t)} = \infty.
\]
\noindent Finite time singularities of the Ricci flow are classified as follows \cite{formationsingularities}: 
\begin{align*}
\emph{Type-I}&:\,\,\,\,\,\limsup_{t\nearrow T}\,(T-t)\,\sup_{M}\,\lvert \text{Rm}_{g(t)}\rvert_{g(t)} < \infty, \\
\emph{Type-II}&:\,\,\,\,\,\limsup_{t\nearrow T}\,(T-t)\,\sup_{M}\,\lvert \text{Rm}_{g(t)}\rvert_{g(t)} = \infty.
\end{align*}
\noindent According to results of Naber \cite{naber} and Enders-M\"uller-Topping \cite{type1} any parabolic dilation of a Type-I Ricci flow at a singular point converges to a non-flat gradient shrinking soliton. On the other hand, far less is known about Type-II singularities. 

The first examples of Type-II singularities in dimension $n\geq 3$ were found in \cite{type2} by Gu and Zhu, who considered a family of rotationally invariant Ricci flows on $S^{n}$. Angenent, Isenberg and Knopf later 
discovered Type-II spherically symmetric Ricci flows on $S^{n}$ that are modelled on degenerate neckpinches \cite{degenerateneck}.
Type-II singularities were also derived for rotationally invariant Ricci flows on $\R^{n}$ by Wu in \cite{wu} and later, for a larger set of initial data, by the author in \cite{work}.

Only very recently the first explicit examples of \emph{non} rotationally symmetric Type-II Ricci flows in dimension higher than three have been analysed by Appleton in \cite{appleton2} and by Stolarski in \cite{stolarski}, where they both obtained Ricci flat singularity models.

In our first result we show that a large family of 4-dimensional cohomogeneity 1 Ricci flows develop Type-II singularities modelled on the Bryant soliton \cite{bryant}. The Ricci flow on 4-dimensional cohomogeneity 1 manifolds has been recently studied on various topologies \cite{bettiol}, \cite{IKS1}, \cite{IKS2},\cite{appleton2}. In \cite{IKS1} Isenberg, Knopf and {\v{S}}e{\v{s}}um showed that the Ricci flow starting at a family of \emph{generalized warped Berger metrics} on $S^{1}\times S^{3}$ is Type-I and becomes rotationally symmetric around any singularity. This behaviour is regarded as a Type-I example of \emph{symmetry enhancement} along the Ricci flow.

In this work we study the Ricci flow evolving from a generalized warped Berger metric on $\R^{4}$. Namely, consider a metric $g_{0}$ invariant under the cohomogeneity 1 left-action of SU(2) on $\R^{4} = \mathbb{C}^{2}$. We can then write $g_{0}$ in Bianchi IX form as \cite{gibbons} 
\[
g_{0} = (ds)^{2} + a^{2}(s)\,\sigma_{1}\otimes \sigma_{1} + \bb^{2}(s)\,\sigma_{2}\otimes\sigma_{2} + \cc^{2}(s)\,\sigma_{3}\otimes\sigma_{3},
\]   
\noindent where $\{\sigma_{i}\}_{i=1}^{3}$ is a coframe dual to some Milnor frame $\{X_{i}\}_{i=1}^{3}$ on SU(2), with $X_{3}$ tangent to the Hopf fibres. We further assume that $g_{0}$ is invariant under rotations of the Hopf fibres. The last condition means that the left-invariant vector field $X_{3}$ is Killing thus extending 
the Lie algebra of $g_{0}$-Killing vectors to $\mathfrak{u}(2)$. In particular, we can write $g_{0}$ as 
\[
g_{0} = (ds)^{2} + \bb^{2}(s)\,(\sigma_{1}\otimes \sigma_{1} + \sigma_{2}\otimes\sigma_{2}) + \cc^{2}(s)\,\sigma_{3}\otimes\sigma_{3}.
\]
\noindent In analogy with \cite{IKS1} we finally assume that $\cc\leq \bb$ so that each non-degenerate fiber $\{s\}\times S^{3}$ is a Berger sphere. We call such metric a \emph{warped Berger} metric on $\R^{4}$. 

We first focus on initial data with linear volume growth.
\begin{definition}\label{definitionGm}
We let $\G$ be the set of complete bounded curvature warped Berger metrics $g_{0}$ on $\R^{4}$ satisfying the following conditions:
\begin{itemize}
\item[(i)] $\bb_{s}\geq 0$, $H\geq 0$, where $H(r)$ is the \emph{mean curvature} of the centred Euclidean sphere of Euclidean radius $r$ with respect to $g_{0}$. 
\item[(ii)] $\sup_{p\in\R^{4}}\bb(p) < \infty$.
\end{itemize} 
\end{definition}

The control on the sign of $H$ amounts to ruling out the existence of necks \cite{exampleneck}. We also note that the condition in (i) is weaker than asking for both $\bb$ and $\cc$ to be monotone. We prove that any Ricci flow starting in $\G$ converges to the Bryant soliton once suitably rescaled. This provides Type-II examples of symmetry enhancement and constitutes the first explicit case in dimension higher than three of a non-conformally flat Type-II Ricci flow converging to a rotationally symmetric singularity model.

\begin{theorem}\label{maintheoremtype2} Let $(\R^{4},g(t))_{0\leq t < T}$ be the maximal complete, bounded curvature solution to the Ricci flow starting at some $g_{0}\in\G$. The solution develops a Type-II singularity at some $T< \infty$ which is modelled on the Bryant soliton once suitably dilated. 
\end{theorem}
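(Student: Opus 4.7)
My plan is to argue in three stages: (a) finite-time singularity concentrating at the origin, (b) exclusion of Type-I behavior, and (c) Type-II blow-up and identification with the Bryant soliton via symmetry enhancement.

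For (a), the first task is to verify that the defining conditions of $\G$---the inequalities $\bb_{s}\geq 0$, $H\geq 0$, $\cc\leq \bb$, together with $\sup \bb < \infty$---are preserved along the flow. Each should follow from a parabolic maximum principle applied to the scalar evolution equations obtained by substituting the warped Berger ansatz into the Ricci flow; these are the type of preservation statement that appear in \cite{IKS1} for $S^{1}\times S^{3}$ and should have analogues in the $\R^{4}$ setting. Finite-time singularity then follows because the positive Ricci contribution from the $S^{3}$ fibres has magnitude bounded below by a multiple of $1/\bb^{2}$, and $\bb$ is uniformly bounded above, so Hamilton's maximum principle for scalar curvature forces $R_{\min}\to\infty$ in finite time. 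Concentration of curvature at the origin uses $\bb_{s} \geq 0$ and the cylindrical bound: the metric stabilizes at infinity to a warped cylinder with uniformly bounded curvature, so that by pseudolocality (or standard cohomogeneity-1 curvature estimates applied to the scalar ODE/PDE system) the curvature remains uniformly bounded on $[0,T)$ outside any neighborhood of $\origin$.

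For (b), I argue by contradiction. A Type-I singularity would, by Naber and Enders-M\"uller-Topping, produce a non-flat gradient shrinking soliton as a parabolic blow-up limit. Any such limit inherits the SU(2)-cohomogeneity-1 structure together with the preserved conditions $\cc\leq \bb$, $\bb_{s}\geq 0$, $H\geq 0$, and lives on $\R^{4}$ or a quotient. A classification of such SU(2)-invariant gradient shrinkers should rule out the only candidates: the shrinking cylinders $\R\times S^{3}$ and $\R^{2}\times S^{2}$ are precluded by the no-neck condition $H\geq 0$, while the Gaussian soliton is precluded by the concentration of curvature at $\origin$.

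For (c), Hamilton's Type-II point-picking produces a sequence $(x_{k},t_{k})$ with $t_{k}\nearrow T$ and rescalings $Q_{k} := |\mathrm{Rm}|(x_{k},t_{k}) \to \infty$ such that the parabolic rescalings $Q_{k}\,g(t_{k} + t/Q_{k})$ subconverge, with pointed basepoints $x_{k}$, to a complete eternal Ricci flow $(\R^{4},g_{\infty}(t))$ of bounded curvature attaining its spacetime maximum. Since the singularity is at $\origin$, the $x_{k}$ may be taken with $d_{g(t_{k})}(x_{k},\origin)\cdot\sqrt{Q_{k}} \lesssim 1$, and the smoothness of $g_{0}$ at $\origin$ (which forces $\bb,\cc \sim s$ with $\cc/\bb \to 1$ there) combined with a quantitative pinching estimate gives $\cc_{k}/\bb_{k}\to 1$ uniformly on compact subsets of the blow-up. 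Hence $g_{\infty}(t)$ is rotationally symmetric on $\R^{4}$; Hamilton's differential Harnack then promotes it to a steady gradient soliton, and the classification of rotationally symmetric steady gradient Ricci solitons on $\R^{4}$ identifies $g_{\infty}$ with the Bryant soliton. The principal obstacle I expect is the quantitative symmetry enhancement $\cc_{k}/\bb_{k} \to 1$ on the blow-up: while this is immediate at $s=0$, propagating it to a full neighborhood of the origin in the rescaled geometry requires a comparison estimate between $\cc$ and $\bb$ that survives parabolic dilation. A secondary obstacle is the classification of SU(2)-invariant gradient shrinking solitons with the constraints of $\G$ needed to exclude Type-I.
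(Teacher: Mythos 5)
Your overall skeleton (finite-time blow-up, exclude Type-I, identify the Type-II limit via Hamilton's eternal-solution result) matches the paper's. But several of the intermediate steps are either wrong as stated or substitute hand-waving for what is in fact the main technical content of the paper.

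First, your stage (a) claim that ``the curvature remains uniformly bounded on $[0,T)$ outside any neighborhood of $\origin$'' is false. The paper proves precisely the opposite in Theorem~\ref{maintheoremtype2bis}(ii): the singularity is \emph{global}, i.e.\ $\limsup_{t\nearrow T}|\mathrm{Rm}|(p,t)=\infty$ at every $p\in\R^4$. The true statement going in the other direction---that bounded curvature near $\origin$ forces bounded curvature everywhere---is what the paper uses: the monotonicity of $\bb$ (Lemma~\ref{cruciallemma}) together with the scale-invariant estimate $\bb^2|\mathrm{Rm}|\le\alpha$ from \eqref{normsquaredcurvatureboundedbyc} shows that if $\bb(r,t)\ge\delta$ for some fixed $r>0$ then the curvature is uniformly bounded on $\R^4\setminus B(\origin,r)$. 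Your pseudolocality heuristic does not give what you want: $g_0\in\G$ is bounded by a cylinder, but a round cylinder does not have decaying curvature, so the hypotheses of pseudolocality-type statements are not satisfied at infinity, and indeed the cylinder regions \emph{do} become singular.

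Second, in stage (b) you claim the shrinking cylinders $\R\times S^3$ and $\R^2\times S^2$ are ``precluded by the no-neck condition $H\ge0$.'' This is incorrect: the round cylinder $\R\times S^3$ has $\bb_s=\cc_s=0$, hence $H\equiv0$, which is perfectly compatible with $H\ge0$. (Theorem~\ref{maintheoremtype2bis}(iii) in fact exhibits shrinking cylinders as blow-up limits at infinity, so no condition in $\G$ can rule them out as singularity models.) The paper excludes the cylinder at $\origin$ by a \emph{topological} argument: if $\origin$ is a Type-I singular point, Enders--M\"uller--Topping produce a non-flat gradient shrinking soliton as blow-up limit, which by the rotational symmetry of singularity models (Proposition~\ref{classificationsingularity}) and Kotschwar's rigidity must be the cylinder---but the Cheeger--Gromov limit of pointed dilations of $(\R^4,g(t),\origin)$ is exhausted by open sets diffeomorphic to $\R^4$, and $\R\times S^3$ is not. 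You also implicitly need to know that any blow-up limit is rotationally symmetric to invoke a classification of gradient shrinkers; without that, the class of SU(2)-invariant shrinkers you would have to classify is much larger and nothing in the literature does it.

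Third, the ``principal obstacle'' you name---propagating $\cc/\bb\to1$ to the rescaled limit---is not a secondary issue to be waved through; it is the technical heart of the paper, comprising essentially all of Sections~4 and~5. The key is that the ratio $\cc/\bb$ is scale invariant, so $\cc/\bb\to1$ at the limit requires estimates that \emph{break} scaling, e.g.\ Lemma~\ref{rotationalsymmetry} ($\frac{1}{\bb}(\frac{\bb}{\cc}-1)\le\alpha$), Lemma~\ref{rotationalsymmetryorder1} ($|\cc_s/\cc-\bb_s/\bb|\le\alpha$), and Lemma~\ref{rotationalsymmetry2} ($\bb|k_{01}-k_{03}|\le\alpha$). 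These, combined with the $C^3$-bounds of Lemma~\ref{C3bounds}, the Ascoli--Arzel\`a extraction of limiting vector fields, and the fact that $\bb(\Phi_j(q),t_j)\to0$ (Lemma~\ref{bphihgoesto0true}), are what show that the Milnor frame converges to a frame of Killing fields on the singularity model. Your proposal does not supply any mechanism for producing such scale-breaking estimates, so stage (c) as written does not go through. Finally, in (c) you also need $\kappa$-non-collapsing to apply Hamilton's compactness; the paper verifies this via the volume lower bound coming from the uniform bound $\cc/\bb\ge\varepsilon$.
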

Theorem \ref{maintheoremtype2} resembles an analogous result recently derived in \cite{appleton2}, where Appleton studied the Ricci flow on the blow-up of $\mathbb{C}^{2}/\mathbb{Z}_{k}$ starting from a subclass of U(2)-invariant metrics as above. In particular, he showed that when $k=2$ if the initial metric is bounded by a cylinder at infinity and both $\bb$ and $\cc$ are increasing and satisfy a differential inequality, then the flow is Type-II and converges to the Eguchi-Hanson metric once suitably dilated.

Theorem \ref{maintheoremtype2} characterizes the Type-II singularity only partially. Indeed, while the Type-II singularity is not isolated, being the Bryant soliton asymptotically cylindrical (see, e.g., \cite{brendle3}), in general there is no control on the blow-up sequence giving rise to a family of shrinking cylinders. Moreover, the symmetries and the lack of necks suggest that the curvature ought to become large locally around the singular orbit (see also \cite{appleton2}). Equivalently, one should detect the Bryant soliton when dilating the flow at the origin $\origin$ by suitable factors. In our next result we address these issues hence providing  a much clearer picture of the Type-II singularity developed by Ricci flows in $\G$. In the following statement $R_{g(t)}$ represents the \emph{scalar curvature} of the Ricci flow solution. 
\begin{theorem}\label{maintheoremtype2bis} Let $(\R^{4},g(t))_{0\leq t < T}$ be the maximal complete, bounded curvature solution to the Ricci flow starting at some $g_{0}\in\G$. Then the following conditions hold:
\begin{itemize}
\setlength\itemsep{0.5em}
\item[(i)]\emph{(The Bryant soliton appears at the origin.)} There exists $t_{j}\nearrow T$ such that the rescaled Ricci flows $(\R^{4},g_{j}(t),\origin)$ defined by $g_{j}(t)\doteq R_{g(t_{j})}(\origin)g(t_{j} + (R_{g(t_{j})}(\origin))^{-1}t)$ converge to the Bryant soliton in the Cheeger-Gromov sense.
\item[(ii)]\emph{(The singularity is global.)} For any $p\in\R^{4}$ we have
\[
\limsup_{t\nearrow T}\left(\lvert \emph{Rm}_{g(t)}\rvert_{g(t)}(p)\right) = \infty.
\]
\item[(iii)]\emph{(Type-I blow-up at infinity.)} For any $t_{j}\nearrow T$ there exist a sequence $\{p_{j}\}$ and $\alpha > 0$ such that $d_{g_{0}}(\origin,p_{j})\rightarrow \infty$, $(T-t_{j})R_{g(t_{j})}(p_{j})\leq \alpha$, and the rescaled Ricci flows $(\R^{4},g_{j}(t),p_{j})$ defined by $g_{j}(t)\doteq R_{g(t_{j})}(p_{j})g(t_{j} + (R_{g(t_{j})}(p_{j}))^{-1}t)$ converge to the self-similar shrinking cylinder in the Cheeger-Gromov sense.
\item[(iv)]\emph{(Classification of singularity models.)} Any non-trivial singularity model is isometric to either the self-similar shrinking cylinder or the Bryant soliton.
\end{itemize}
\end{theorem}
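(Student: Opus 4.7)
The plan is to bootstrap from Theorem \ref{maintheoremtype2}, which already provides a sequence $(p_{j}, t_{j})$ along which parabolic rescaling converges to the Bryant soliton, by combining it with the SU(2) symmetry of $g(t)$, the cylindrical asymptotic structure forced by $\sup_{\R^{4}}\bb_{0} < \infty$, and classification results for $\kappa$-noncollapsed ancient Ricci flows.

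For part (i), I would use that the Bryant soliton is SO(4)-invariant with a unique scalar-curvature maximum at its tip, the only SO(4)-fixed point. Any Cheeger-Gromov limit of an SU(2)-invariant Ricci flow remains SU(2)-invariant, and on $(\R^{4}, g(t_{j}))$ the only SU(2)-fixed point is the origin $\origin$. A bounded-distortion argument then shows that the basepoints $p_{j}$ produced by Theorem \ref{maintheoremtype2} stay at bounded rescaled distance from $\origin$, yielding $R_{g(t_{j})}(\origin) \asymp R_{g(t_{j})}(p_{j})$. Switching the basepoint to $\origin$ preserves convergence to the Bryant soliton up to an isometric shift of the tip.

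For part (iii), the conditions $\bb_{s} \geq 0$ and $\sup\bb_{0} < \infty$ provide finite asymptotic warping functions $\bb_{\infty}(t), \cc_{\infty}(t)$ as $s \to \infty$ with $\cc_{\infty} \leq \bb_{\infty}$. The geometry at infinity is then that of a generalized Berger cylinder on which the Ricci flow reduces to an ODE, and standard rounding estimates for Berger spheres drive $\cc_{\infty}/\bb_{\infty} \to 1$ as $t \nearrow T$ with the common radius shrinking self-similarly. Choosing $p_{j}$ with $d_{g_{0}}(\origin, p_{j}) \to \infty$ and $R_{g(t_{j})}(p_{j})(T-t_{j})$ bounded then produces the round shrinking cylinder as blow-up limit. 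Part (ii) will follow from (i) and (iii): the existence of a uniformly curvature-bounded fiber $\{s_{0}\} \times S^{3}$ up to time $T$ would, via pseudolocality and the preservation of cohomogeneity 1 along the flow, allow extending the solution past $T$ on a neighbourhood of that fiber, contradicting the maximality of $T$ since the singular set accumulates both at $\origin$ and at spatial infinity.

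For part (iv), any non-trivial singularity model is a complete, $\kappa$-noncollapsed, bounded-curvature ancient Ricci flow inheriting SU(2)-cohomogeneity 1 symmetry, and by curvature-positivity estimates from earlier sections the limit has nonnegative sectional curvature. Within the resulting class of SU(2)-invariant warped Berger ancient solutions, one aims to establish the dichotomy between a collapsed Hopf fiber (forcing a shrinking cylinder) and a uniformly nondegenerate Berger fiber (forcing a non-flat steady gradient soliton); a rigidity argument for SU(2)-invariant steady solitons on $\R^{4}$ with nonnegative curvature then identifies the latter with the Bryant soliton. The hard part will be this final classification, where one must exclude exotic non-rotationally symmetric ancient SU(2)-invariant solutions via a delicate combination of curvature monotonicity along the original flow with rigidity theorems in the warped Berger class.
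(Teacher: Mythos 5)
Your plan reproduces the general spirit of part (i)---use the SU(2)-symmetry, the fact that the tip of the Bryant soliton is the unique point where the scalar curvature is maximal, and a distance-distortion argument to move the basepoint to $\origin$---but it leaves out the actual mechanism the paper uses: since $\bb$ is monotone in $s$ and the Killing fields $X_{i,\infty}$ generating the rotational symmetry cannot vanish along a principal orbit of the limit, the rescaled distance $d_{g_j(0)}(\origin,p_j)$ is forced to tend to zero, not merely stay bounded. This is what lets one replace the dilation factors $\lambda_j$ with $R_{g(t_j)}(\origin)$. Your ``bounded distance plus isometric shift'' is on the right track but is not a proof.

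The serious gap is in part (ii), and in the order of your argument. You propose to derive (ii) from (iii), asserting that if a hypersphere $\{s_0\}\times S^3$ stayed curvature-bounded up to time $T$, pseudolocality would let you extend the flow past $T$ near that fiber, contradicting ``maximality of $T$ since the singular set accumulates both at $\origin$ and at spatial infinity.'' This does not work. First, that the singular set accumulates at spatial infinity is precisely the content of (ii); you cannot assume it. Second, having a bounded-curvature fiber does not contradict anything about $T$: a priori, the flow could blow up only at the origin and remain smooth on the complement of a compact ball, which is exactly the scenario the paper has to exclude. Pseudolocality preserves local flatness forward in time but does not force a global contradiction with $T$ being maximal. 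The paper's actual proof of (ii) is the technical heart of the theorem: it shows (Claim 6) via a blow-up/volume argument (using Perelman's no-positive-AVR theorem for $\kappa$-solutions) that $\cc H\to 0$ outside the candidate singular ball, hence $\bb^2\cc$ tends to a positive constant $\gamma$ there, and then uses the logarithmic bound $\bb\bb_{ss}\log\bb\geq -\alpha$ from Lemma \ref{estimatelog} to show $\bb$ cannot jump from being small at $\bar{x}$ to being of order $\gamma^{1/3}$ just beyond it. None of this is in your sketch.

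For (iii), your heuristic that ``the geometry at infinity reduces to an ODE and rounds out'' is not rigorous and, more importantly, it presupposes that the geometry at infinity becomes singular at time $T$---again (ii). The paper, after establishing (ii), chooses $p_j$ far out with small derivatives (via \eqref{controlinfinitypj}), proves $(T-t_j)R_{g(t_j)}(p_j)$ is bounded using $\bb^2(\cdot,t)\leq\alpha(T-t)$ (a consequence of (ii)), and then uses $\lvert\partial_y\phi_\infty\rvert<1/2$ to rule out the Bryant soliton as the limit. For (iv), you overlook that the rotational symmetry of the singularity model is already established in Section 5 (Proposition \ref{classificationsingularity}), so there are no ``exotic non-rotationally symmetric ancient SU(2)-invariant solutions'' to worry about; the remaining dichotomy is then supplied directly by the classification of rotationally symmetric $\kappa$-solutions in \cite{brendle2}, rather than by a steady-soliton rigidity argument, which would in any case be insufficient since a priori the model need not be a soliton.
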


We note that as an immediate consequence of (i) the scalar curvature and the full curvature are comparable in certain regions up to the singular time. We also point out that the phenomenon of Type-II enhancement of symmetries along the Ricci flow is intrinsic to the classification of 3-dimensional $\kappa$-solutions obtained by Brendle in \cite{brendle}. We also note that item (iv) in Theorem \ref{maintheoremtype2bis} relies on the recent extension of Brendle's work to higher dimensions by Li and Zhang \cite{brendle2}.

\vspace{0.05in}

Next, we show that the long-time property is satisfied by a class of Berger metrics whose curvature decays at infinity. General long-time existence results on non-compact manifolds usually rely on controlling the sign of the curvature and the volume growth \cite{cabezas}. From a different perspective, similar conclusions may be achieved when the analysis is restricted to families of homogeneous Riemannian metrics \cite{lafuente}. In this case the behaviour of the flow for long times is also understood \cite{bohm}. 
Instead of assuming a transitive action of a Lie group, one may study cohomogeneity 1 manifolds. In this direction, Oliynyk and Woolgar proved that the Ricci flow on $\mathbb{R}^{n}$ starting at an asymptotically flat spherically symmetric metric without necks is immortal \cite{woolgar}. The author improved this result by allowing any decay of the curvature \cite{work}. 

In our setting we consider the following set, whose intersection with $\G$ is empty.
\begin{definition}\label{definitionGin} We let $\Gin$ be the set of complete warped Berger metrics $g$ on $\R^{4}$ with \emph{positive injectivity radius} and satisfying the following conditions:
\begin{itemize}
\item[(i)] $\bb_{s} \geq 0$, $H\geq 0$.
\item[(ii)] $\lvert\text{Rm}_{g}\rvert_{g}(s)\rightarrow 0$ as $s\rightarrow \infty$ and there exist $\mu > 0$ and $s_{0} > 0$ such that $\cc(s) \geq \mu$ for any $s\geq s_{0}$.
\end{itemize}
\end{definition}

We prove the following:

\begin{theorem}\label{mainimmortalresult}
Let $(\R^{4},g(t))_{0\leq t < T}$ be the maximal complete, bounded curvature solution to the Ricci flow evolving from some $g_{0}\in\Gin$. Then the solution is immortal.
\end{theorem}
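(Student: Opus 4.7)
\vspace{0.1in}
\noindent\emph{Proof proposal.} The plan is to argue by contradiction: assume $T < \infty$ and show that all the hypotheses characterising $\Gin$ persist along $g(t)$ up to $T$, which together with the decay of curvature at infinity yields a uniform bound $\sup_{\R^4 \times [0,T)}|\mathrm{Rm}_{g(t)}|_{g(t)} < \infty$, contradicting the characterisation of the maximal time of existence. A first routine step is to check that the $\mathrm{U}(2)$-cohomogeneity 1 warped Berger structure is preserved along the Ricci flow (by uniqueness in the Chen--Zhu class applied to the pull-back of $g(t)$ under the action of the isometry group of $g_{0}$); this reduces the PDE to a strictly parabolic system for the two scalar profiles $\bb(s,t)$ and $\cc(s,t)$ plus an ODE for the arclength coordinate $s(\cdot,t)$.

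Next I would establish the preservation of the structural pinching: namely, that $\bb_{s}\geq 0$ and $H\geq 0$, and that a (possibly smaller) lower bound $\cc \geq \mu'>0$ outside a large Euclidean ball persist for all $t<T$. Preservation of the no-necks condition should follow by applying a tensor/scalar maximum principle to the evolution equations of $\bb_{s}$ and $H$, using as input the appropriate boundary/asymptotic conditions near the singular orbit (where $\bb,\cc\to 0$) and at spatial infinity (where curvature decay provides the required decay of the derivatives). For the Hopf-fibre control I expect to combine a maximum-principle comparison for $\cc$ with the fact that, whenever $\bb_{s}\geq 0$, the quantity $\cc/\bb$ has a favourable evolution that prevents $\cc$ from collapsing in regions where $\bb$ itself does not.

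Once the structural conditions are stable, the curvature control at infinity can be propagated in time. The idea is to use Perelman's pseudolocality theorem (or a direct parabolic-barrier comparison with the evolution of quasi-Euclidean data) on exhausting regions $\{s\geq s_{k}\}$ where $|\mathrm{Rm}_{g_{0}}|$ is arbitrarily small; since $\cc\geq \mu'$ gives a lower injectivity radius bound outside a large set, one obtains a uniform bound $|\mathrm{Rm}_{g(t)}|\leq C_{k}$ there for $t\in[0,T)$. In the complementary compact region, which is trapped inside a fixed large Euclidean ball by the no-necks condition together with the distance-distortion estimate, one can bound the curvature by running a standard Shi-type argument fed with these a priori $C^{0}$ controls on $\bb$, $\cc$ and their first derivatives, together with the scalar curvature formula for warped Berger metrics which expresses $|\mathrm{Rm}|$ in terms of $\bb,\cc,\bb_s,\cc_s,\bb_{ss},\cc_{ss}$ and remains controlled as long as $\cc$ stays away from zero.

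The main obstacle will be the curvature-preservation step: the lack of rotational symmetry means $|\mathrm{Rm}|$ involves the Berger ``skew'' ratios $\cc/\bb$ and $\cc^{2}/\bb^{2}$ whose differential inequalities do not decouple, so one cannot simply cite the spherically symmetric analysis of \cite{work}. The delicate point is to rule out a scenario in which $\cc/\bb$ concentrates in a bounded region producing high curvature despite the decay at infinity; handling this will require a careful interplay between the maximum principle applied to $\cc$, the monotonicity $\bb_{s}\geq 0$, and the lower injectivity bound, in the spirit of (but more subtle than) the arguments used to treat warped products in the asymptotically flat setting.
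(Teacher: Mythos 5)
Your proposal aims for a direct \emph{a priori} curvature bound: preserve the $\Gin$ structure, use pseudolocality to control the curvature at infinity, and then ``run a Shi-type argument fed with $C^{0}$ controls on $\bb,\cc$ and their first derivatives'' in the remaining compact region. This last step is where the plan breaks down and is not what the paper does. The curvature of a warped Berger metric contains the mixed sectional curvatures $k_{01}=-\bb_{ss}/\bb$ and $k_{03}=-\cc_{ss}/\cc$, so controlling $\bb,\cc,\bb_{s},\cc_{s}$ in $C^{0}$ does not control $\lvert\mathrm{Rm}\rvert$; Shi's estimates propagate from a curvature bound to higher-derivative bounds, not from zeroth and first spatial derivatives of the warping functions to a curvature bound. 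What the paper's estimates actually give in the compact region is $\bb^{2}\lvert\mathrm{Rm}\rvert\leq\alpha$ (Lemma \ref{estimatesforGin}), which is scale-invariant and in no way prevents blow-up if $\bb\to0$. Your own last paragraph flags exactly this scenario (curvature concentrating where $\bb$ degenerates) but leaves it unresolved, which is the entire content of the theorem.

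The paper instead argues by contradiction via a blow-up analysis, and the decisive input is a quantitative lower bound on the \emph{rescaled mean curvature}: Corollary \ref{lowerboundH} gives $\cc H\geq\mu>0$ on a compact ball $B(\origin,\tilde{x}_{2})$ for $t$ close to $T$, and Lemma \ref{claimzozzo} ensures any blow-up sequence eventually lives inside that ball. Feeding $\cc H\geq\mu$ into the rotationally symmetric singularity model constructed in Section 5 forces $\partial_{y}\phi_{\infty}\geq\mu/6>0$, hence the limit has Euclidean volume growth; Perelman's \cite[Proposition 11.4]{pseudolocality} then says a non-compact $\kappa$-solution with positive asymptotic volume ratio is flat, contradicting that singularity models are non-flat. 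In short, the mechanism you are missing is not a barrier estimate for $\bb,\cc$ directly, but the preserved positivity of $\cc H$ propagating to a positive derivative $\partial_{y}\phi_{\infty}$ at the blow-up scale, which is what rules the singularity out. Without that ingredient (or some substitute for Perelman's rigidity of positive-AVR ancient solutions), your plan cannot close.
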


The long-time property may in general fail if we omit the requirement on the monotonicity of $\bb$ and $H$. Indeed by the adaptation of \cite{exampleneck} to $\R^{n+1}$ obtained in \cite{work} we deduce that if $g_{0}$ is asymptotically flat with $\bb = \cc$ and $(\R^{4},g_{0})$ contains a neck which is sufficiently pinched (in a precise way), then the Ricci flow is Type-I. It therefore remains to address the relation between Type-I singularities and existence of minimal hyperspheres for Berger Ricci flows. We recall that Angenent and Knopf constructed the first examples of nondegenerate neckpinches by evolving rotationally invariant metrics on $S^{n}$ containing minimal (stable) hyperspheres \cite{exampleneck}. Later the link between Type-I singularities and minimal spheres has been explored for Ricci flows on closed 3-manifolds by Song in \cite{song2}. In our setting we prove the following: 
\begin{theorem}\label{maintypeI}
Let $(\R^{4},g(t))_{0\leq t < T}$ be the maximal complete, bounded curvature solution to the Ricci flow evolving from a complete warped Berger metric $g_{0}$ with positive injectivity radius and curvature decaying at infinity. If $g(t)$ develops a Type-I singularity at $T < \infty$, then there exists $\delta > 0$ such that $(\R^{4},g(t))$ contains minimal embedded 3-spheres for any $t\in [T-\delta,T)$.
\end{theorem}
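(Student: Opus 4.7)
The plan is to argue by contrapositive: if for some sequence $t_j \nearrow T$ the manifolds $(\R^{4}, g(t_j))$ contain no minimal embedded 3-sphere, then I will show that for some $t_0$ close to $T$ the flow restarted at $g(t_0)$ falls into the class covered by Theorem \ref{mainimmortalresult}, contradicting the existence of a singular time $T < \infty$. Because $g(t)$ is SU(2)-invariant for every $t$, the minimal embedded 3-spheres coincide with the non-degenerate orbits $\{s\}\times S^{3}$ on which the mean curvature $H(s,t) = 2\bb_{s}/\bb + \cc_{s}/\cc$ of the hypersphere vanishes, so the whole analysis reduces to studying the scalar function $H(\cdot,t)$ on the quotient.

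The first step is to control the boundary behavior of $H(\cdot,t_0)$. Smoothness of the metric at the origin, together with the standard Taylor expansions of $\bb$ and $\cc$ near $s=0$, force $H(s,t_0) \to +\infty$ as $s \to 0^{+}$. At spatial infinity, the hypothesis that $\lvert\text{Rm}_{g_0}\rvert$ decays at infinity, combined with Shi-type derivative estimates and the warped-Berger reduced ODE system, should propagate a pointwise curvature decay and a uniform lower bound on $\cc(\cdot,t_0)$ out to infinity; these together are enough to show $H(s,t_0) > 0$ for all sufficiently large $s$. Combined with the contrapositive assumption that $H(\cdot,t_0)$ has no zero, the intermediate value theorem yields $H(s,t_0) > 0$ for every $s > 0$.

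The central step is then to upgrade $H(\cdot,t_0) > 0$ to the full set of hypotheses of Definition \ref{definitionGin}: the mean curvature positivity $H \geq 0$ has been secured, the curvature decay and the non-collapsing $\cc(s,t_0) \geq \mu > 0$ for $s \geq s_0$ follow from the previous step, so it remains to establish the monotonicity $\bb_{s}(\cdot,t_0) \geq 0$. Once $g(t_0) \in \Gin$, Theorem \ref{mainimmortalresult} applied to the restarted flow with initial datum $g(t_0)$ produces an immortal solution, which is the desired contradiction with $T < \infty$ and thus yields a minimal 3-sphere for every $t$ in some interval $[T-\delta,T)$.

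The main obstacle is this last step, namely producing $\bb_{s}(\cdot,t_0) \geq 0$: the condition $H \geq 0$ and the monotonicity of $\bb$ are not equivalent a priori. My approach is to study the one-dimensional parabolic system satisfied by $\bb_{s}$ and $\cc_{s}$ under the warped-Berger reduction of the Ricci flow, and to apply a tailored maximum principle with barriers built from the Type-I curvature bound and from the decay at infinity, in the spirit of the rotationally symmetric arguments in \cite{work} and \cite{exampleneck}. Should this route prove too delicate, a backup plan is to bypass Definition \ref{definitionGin} entirely and prove an ad hoc immortality criterion requiring only $H \geq 0$ together with the curvature decay and injectivity-radius hypotheses, which could then be invoked in place of Theorem \ref{mainimmortalresult}.
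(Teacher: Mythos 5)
Your approach is genuinely different from the paper's, and unfortunately it has a gap at precisely the step you flag as the main obstacle: there is no route from the contradiction hypothesis to $\bb_{s}(\cdot,t_{0})\geq 0$. The assumption $H(\cdot,t_{0})>0$ only controls the combination $2\bb_{s}/\bb+\cc_{s}/\cc$; it allows $\bb_{s}<0$ on an interval as long as $\cc_{s}/\cc$ is large there, and nothing in the Type-I bound, the curvature decay, or a parabolic maximum principle for $\bb_{s}$ can conjure $\bb_{s}\geq 0$ at a fixed time $t_{0}$ — maximum principles \emph{propagate} such conditions forward from initial data that already satisfies them, they do not create them. Your backup plan of an ad hoc immortality criterion keyed only to $H\geq 0$ would require reproving all of Section 4.2 and Lemma \ref{cruciallemma}, where $\bb_{s}\geq 0$ is used throughout (for instance, the coupled maximum principle for $\cc\bb_{s}/\bb$ and $\cc H$, and Lemma \ref{definitionxtilde}), so it is not a routine detour. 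A further sign something is off: your argument never invokes the Type-I hypothesis. If it worked, it would prove that any finite-time singularity under these decay hypotheses forces minimal spheres, which is a strictly stronger claim than the theorem being proved, and should make you suspicious.

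The paper's proof is shorter and avoids $\Gin$ membership and the immortality theorem entirely. It uses the Type-I hypothesis in an essential way: pseudolocality gives a radius $\rho$ outside which curvature is controlled, so the singular set lies in $B(\origin,\rho)$; the Type-I characterization plus the singularity-model classification in Proposition \ref{classificationsingularity} then rule out the origin from the singular set (a shrinking cylinder cannot be exhausted by balls diffeomorphic to $\R^{4}$). Hence curvature stays bounded on $B(\origin,2r)$ for some $r>0$, giving $\bb(r,t)\geq\cc(r,t)\geq\varepsilon$ uniformly. Under the contradiction hypothesis $H(\cdot,t_{j})>0$ the identity $H=(\log(\bb^{2}\cc))_{s}$ forces $\bb^{2}\cc(x,t_{j})\geq\bb^{2}\cc(r,t_{j})\geq\varepsilon^{3}$ for all $x\geq r$. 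But the singular set contains some point $p$ in the annulus $B(\origin,\rho)\setminus B(\origin,2r)$ where $\bb(p,t)$, and by Lemma \ref{rotationalsymmetry} also $\cc(p,t)$, converge to $0$, contradicting the lower bound on $\bb^{2}\cc$. If you want to keep a version of your strategy, the lesson is to aim not for $\Gin$ membership but for the much weaker consequence of $H>0$ — monotonicity of $\bb^{2}\cc$ — which combined with a lower bound at a fixed radius and the existence of a singular point away from the origin already suffices.
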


We may also apply Theorem \ref{maintheoremtype2} and Theorem \ref{mainimmortalresult} to derive two simple corollaries. First we immediately deduce that neither $\G$ nor $\Gin$ contain shrinking solitons. 
\begin{corollary}
There are no Taub-NUT like shrinking Ricci solitons on $\R^{4}$. 
\end{corollary}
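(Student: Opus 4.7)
The plan is to derive a contradiction by combining Theorems \ref{maintheoremtype2} and \ref{mainimmortalresult}. Suppose $g_{0}$ is a complete warped Berger shrinking gradient Ricci soliton on $\R^{4}$, so that $\text{Ric}_{g_{0}} + \text{Hess}(f) = (1/2)g_{0}$ for some smooth potential $f$. Standard shrinker theory gives that $g_{0}$ has bounded curvature and that the associated self-similar Ricci flow has a finite maximal time $T<\infty$ and is of Type-I; in particular, it is neither immortal nor Type-II.

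First, I would exploit the SU(2)-cohomogeneity together with the rotation symmetry of the Hopf fibres to reduce the shrinker equation to a system of ODEs in the arc-length parameter $s$ for the warping functions $\bb$, $\cc$ and the potential $f$, subject to the smooth closing-up conditions at $\origin$. Using the standard soliton identities $R_{g_{0}}+|\nabla f|^{2}=f$ and $R_{g_{0}}\geq 0$, together with monotonicity arguments along the level sets of $f$, I would then verify the two ``no neck'' conditions $\bb_{s}\geq 0$ and $H\geq 0$ common to Definitions \ref{definitionGm} and \ref{definitionGin}.

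Next I would split into cases on the asymptotic behaviour of $\bb$. If $\sup\bb<\infty$, then $g_{0}\in\G$ by Definition \ref{definitionGm}, and Theorem \ref{maintheoremtype2} forces the self-similar flow to be Type-II, contradicting its Type-I character. If instead $\bb$ is unbounded, then the opening up at infinity combined with the shrinker asymptotics produces curvature decay $|\text{Rm}_{g_{0}}|(s)\to 0$ and a uniform positive lower bound on $\cc$ at infinity -- otherwise collapsing Hopf fibres would blow up the curvature, violating the shrinker structure. Bounded curvature on a complete $4$-dimensional shrinker also supplies the positive injectivity radius required in Definition \ref{definitionGin}, so $g_{0}\in\Gin$; Theorem \ref{mainimmortalresult} then asserts immortality of the flow, contradicting $T<\infty$.

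The main obstacle is the ODE analysis of the second step: showing that the SU(2)-invariant shrinker system genuinely enforces $\bb_{s}\geq 0$ and $H\geq 0$, and, in the unbounded-$\bb$ regime, producing the sharp asymptotics $\cc\geq \mu>0$ and $|\text{Rm}_{g_{0}}|\to 0$ required to land in $\Gin$. Both parts rest on careful integration of the soliton ODEs, using the identity $R_{g_{0}}+|\nabla f|^{2}=f$ and the qualitative behaviour of $f$ near $\origin$ and at infinity.
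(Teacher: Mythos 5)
Your contradiction logic is exactly the paper's: a shrinking soliton generates a self-similar Ricci flow that is Type-I with finite extinction time, whereas Theorem~\ref{maintheoremtype2} shows any flow starting in $\G$ is Type-II, and Theorem~\ref{mainimmortalresult} shows any flow starting in $\Gin$ is immortal. So neither $\G$ nor $\Gin$ can contain a shrinker. The case split on whether $\bb$ is bounded is also the right way to organize this.

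However, you have misread the scope of the corollary, and the extra work you insert both is not needed and contains a genuine gap. In the paper, ``Taub-NUT like'' \emph{means}, by construction, a warped Berger metric lying in $\G$ or $\Gin$ (see the sentence immediately preceding the corollary: ``we immediately deduce that neither $\G$ nor $\Gin$ contain shrinking solitons,'' and the remark explaining that the Taub-NUT metric itself belongs to $\Gin$). Thus the no-neck conditions $\bb_{s}\geq 0$, $H\geq 0$ and the asymptotic conditions (either $\sup\bb<\infty$, or $|\text{Rm}|\to 0$ with $\cc$ bounded below at infinity and positive injectivity radius) are \emph{hypotheses}, not conclusions. Your ``second step''---deriving these conditions from the shrinker identities and the ODE system---is precisely what the corollary does not claim and what the paper never proves. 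You flag this yourself as ``the main obstacle,'' and it is indeed unresolved: it is not clear (and the paper gives no argument) that an arbitrary complete warped Berger shrinker on $\R^{4}$ must satisfy $\bb_{s}\geq 0$ and $H\geq 0$, nor that in the unbounded-$\bb$ regime the curvature must decay and the Hopf fibres must stay uniformly non-collapsed. If you intend the corollary to cover all complete warped Berger shrinkers, that is a strictly stronger statement than what the paper establishes, and your sketch does not close it. Restrict to shrinkers in $\G\cup\Gin$ and the proof reduces to the two-line argument you already have.
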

In the second application we classify warped Berger Ricci flows with bounded nonnegative curvature. In particular we show that for positively curved warped Berger Ricci flows bounded by a cylinder at infinity, parabolic dilations at the origin along any sequence of times give rise to the Bryant soliton.
\begin{corollary}\label{mainpositiveresult}
Let $(\R^{4},g(t))_{0\leq t < T}$ be the maximal complete, bounded curvature Ricci flow starting at some complete warped Berger metric $g_{0}$ with bounded nonnegative curvature. Then $T$ is finite if and only if $\bb(\cdot,0)$ is bounded. If $T$ is finite, then for any $t_{j}\nearrow T$ the rescaled Ricci flows $(\R^{4},g_{j}(t),\origin)$ defined by $g_{j}(t)\doteq R_{g(t_{j})}(\origin)g(t_{j}+(R_{g(t_{j})}(\origin))^{-1}t)$ (sub)converge to the Bryant soliton. 
\end{corollary}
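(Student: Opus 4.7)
The plan is to combine Theorems \ref{maintheoremtype2}, \ref{maintheoremtype2bis} and \ref{mainimmortalresult} with a preliminary geometric observation that nonnegative curvature suffices to place $g_{0}$ in one of the classes $\G$ or $\Gin$, and then to use a symmetry-based rigidity argument at the origin for the universal appearance of the Bryant soliton. The preliminary step is as follows. For a Bianchi IX warped Berger metric the radial sectional curvatures of the $2$-planes containing $\partial_{s}$ are $-\bb_{ss}/\bb$ and $-\cc_{ss}/\cc$, so the hypothesis forces $\bb_{ss}\leq 0$ and $\cc_{ss}\leq 0$. Combined with $\bb(0)=\cc(0)=0$ and $\bb_{s}(0)=\cc_{s}(0)=1$ from smoothness at the origin, together with positivity of $\bb,\cc$ on $(0,\infty)$ and completeness, Sturm comparison forces $\bb_{s},\cc_{s}\geq 0$; this in turn gives $H=2\bb_{s}/\bb+\cc_{s}/\cc\geq 0$, so condition (i) of both Definitions \ref{definitionGm} and \ref{definitionGin} holds automatically.

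If $\bb(\cdot,0)$ is bounded then $g_{0}\in\G$, and Theorem \ref{maintheoremtype2} produces $T<\infty$. Conversely, if $\bb(\cdot,0)$ is unbounded, monotonicity forces $\bb(s,0)\to\infty$, and I would verify $g_{0}\in\Gin$. Positive injectivity radius follows from bounded curvature together with local volume lower bounds from nonneg curvature; the decay $|\text{Rm}_{g_{0}}|(s)\to 0$ follows because the Berger fibre sectional curvatures scale like $\bb^{-2}$ while the radial ones $-\bb_{ss}/\bb$ and $-\cc_{ss}/\cc$ must decay by $\bb\to\infty$, $\bb_{ss}\leq 0$ and the global curvature bound. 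Ruling out collapse of the Hopf fibre, i.e. obtaining $\cc(s)\geq\mu>0$, is the most delicate step: I would combine the warped Berger identities for the fibre curvatures with the bounded-curvature hypothesis and the constraint $\cc\leq\bb$ to show that $\cc\to 0$ along a ray is incompatible with nonneg curvature and $\bb\to\infty$. With $g_{0}\in\Gin$ in hand, Theorem \ref{mainimmortalresult} gives $T=\infty$, which is the contrapositive of the converse direction.

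For Part 2, $T<\infty$ forces $g_{0}\in\G$ by Part 1. Under nonnegative curvature operator, Hamilton's trace Harnack inequality yields monotonicity of $tR_{g(t)}(\origin)$ in $t$, and since Theorem \ref{maintheoremtype2bis}(i) already produces a sequence along which $R_{g(t_{j})}(\origin)\to\infty$, the full family satisfies $R_{g(t)}(\origin)\to\infty$ as $t\to T$. Hence for any $t_{j}\nearrow T$ the rescalings $g_{j}$ are genuine blow-up sequences; Perelman's $\kappa$-noncollapsing at $\origin$, Shi-type bounded-curvature estimates on parabolic neighborhoods, and Hamilton's compactness theorem produce a non-flat subsequential Cheeger-Gromov limit. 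By Theorem \ref{maintheoremtype2bis}(iv) the limit is either the Bryant soliton or the self-similar shrinking cylinder $\R\times S^{3}$; the isometric SU(2)-action descends to the limit and fixes the basepoint $\origin$, but SU(2) acts freely on $S^{3}$, so the cylinder admits no SU(2)-fixed point. Thus the limit must be the Bryant soliton. The principal technical obstacle is the Hopf-fibre non-collapse $\cc\geq\mu$ in Part 1, which requires a careful interplay between the bounded-curvature hypothesis and the warped Berger curvature formulas.
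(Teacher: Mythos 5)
Your treatment of Part 2 and of the case $\sup\bb(\cdot,0)<\infty$ matches the paper: concavity of $\bb,\cc$ forces $\bb_{s},\cc_{s}\geq 0$, hence $H\geq 0$, and the Harnack estimate (which yields monotonicity of $tR_{g(t)}(\origin)$ for nonnegative curvature operator) upgrades the subsequential Bryant limit in Theorem \ref{maintheoremtype2bis}(i) to an arbitrary-sequence statement. However, there is a genuine gap in the unbounded case where you attempt to verify $g_{0}\in\Gin$ directly at $t=0$.

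The issue is the decay $\lvert\text{Rm}_{g_{0}}\rvert(s)\to 0$. Your argument handles the vertical sectional curvatures correctly (they scale as $\bb^{-2}$ and $\bb\to\infty$), but for the radial curvature $-\bb_{ss}/\bb$ the reasoning does not close. From bounded curvature you only get $\lvert\bb_{ss}/\bb\rvert\leq K$, and from $\bb_{s}$ nonincreasing and bounded below you get $\int_{0}^{\infty}\lvert\bb_{ss}\rvert\,ds<\infty$; but a bounded, integrable function need not tend to zero — $-\bb_{ss}$ could consist of arbitrarily tall and thin spikes (with $-\bb_{ss}/\bb$ returning to a fixed positive value infinitely often), and at $t=0$ you have no control on $\bb_{sss}$ that would preclude this. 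The paper resolves exactly this point by \emph{not} working with $g_{0}$: it runs the flow to any $t_{0}>0$, uses the fact that nonnegative curvature operator is preserved (so the concavity and hence monotonicity of $\bb,\cc$ persist), and then invokes Shi's derivative estimates — as in the proof of Claim \ref{secondderivativesdecay} — to bound $\bb_{sss}$ and $\cc_{sss}$, which together with integrability forces $\bb_{ss}/\bb\to 0$ and $\cc_{ss}/\cc\to 0$. Then $g(t_{0})\in\Gin$ and Theorem \ref{mainimmortalresult} applies.

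A secondary point: you flag the Hopf-fibre lower bound $\cc\geq\mu>0$ as ``the most delicate step,'' but it is in fact immediate from your own preliminary observation that $\cc_{s}\geq 0$: a nondecreasing positive function is bounded away from zero on $[s_{0},\infty)$ with $\mu=\cc(s_{0})$. The real delicacy you needed to confront is the radial curvature decay described above, and that requires the idea of flowing to a positive time.
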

\subsection*{Outline.} We briefly describe the organization of the paper.
\\In Section 2 we discuss Berger metrics on $\R^{4}$ and we comment on the main assumptions. In Section 3 we show that the condition on the lack of necks persists along the Ricci flow. The main step consists in adapting the analogous argument adopted in \cite{appleton2}, which relies on the application of a general maximum principle for systems of parabolic equations. In Section 4 we study warped Berger Ricci flows evolving from initial data either in $\G$ or in $\Gin$. Similarly to \cite{IKS1} we show that the curvature is controlled by the size of the principal orbits and that the solution becomes rotationally symmetric around any singularity at some rate that breaks scale-invariance. An important ingredient, for the case of $\Gin$, is also given by the application of the Pseudolocality formula in \cite{pseudolocalityapplication} by Chau, Tam and Yu. In Section 5 we prove that any singularity model is rotationally symmetric by showing that the left-invariant Milnor frame diagonalizing the metric generates a copy of $\mathfrak{su}$(2) in the Lie algebra of Killing fields acting on the singularity model. We then apply the rigidity result obtained by Zhang in \cite{zhang} to classify these singularity models. In Section 6 we prove the main results. Theorem \ref{maintheoremtype2} heavily relies on the characterization of Type-I singularities in \cite{naber} and \cite{type1}. The appearance of the Bryant soliton follows from a result by Hamilton \cite{eternal} once we know that the singularity is Type-II. The localization of the Bryant soliton in (i) of Theorem \ref{maintheoremtype2bis} is a direct consequence of the convergence of left-invariant vector fields obtained in Section 5. The property that the singularity is global depends on the monotonicity assumption ($\bb_{s} \geq 0$, $H\geq 0$), which allows us to control the space-time region where the flow stays smooth. The Type-I  blow-up at infinity follows once we know that the solution becomes singular everywhere at some finite time $T$. We then obtain the classification of singularity models by combining the characterization of singularity models in Section 5 with the analysis in \cite{brendle2}. The proof of Theorem \ref{mainimmortalresult} follows from a contradiction argument. We show that if a Ricci flow in $\Gin$ develops a finite-time singularity, then \emph{any} singularity model is a non-compact $\kappa$-solution with Euclidean volume growth. However, in \cite{pseudolocality} Perelman showed that this is not possible. We finally address the proof of Theorem \ref{maintypeI}, which again depends on the characterization of Type-I Ricci flows obtained in \cite{type1}. The last section is devoted to deriving some easy applications of the main results. 

\subsection*{Acknowledgements.} The author would like to thank his advisor Jason Lotay for suggesting the problem, for the constant support and for many helpful conversations.
\section{Setting}
\subsection{Warped Berger metrics on $\R^{4}$.}
A compact Lie group $\mathsf{G}$ acting on a Riemannian manifold $(M,g)$ via isometries is said to act with \textit{cohomogeneity} 1 if the orbit space $M/\mathsf{G}$ is 1-dimensional.
If $M$ is a non-compact manifold, then the orbit space is either homeomorphic to $[0,1)$ or to $\mathbb{R},$ depending on whether there exists a singular orbit of codimension greater than one (see, e.g., \cite{cohomogeneity1}). We analyse the first case, with $\mathsf{G}$ and $M$ being $\text{SU(2)}$ and $\mathbb{R}^{4}$ respectively. 
\\Let us identify $S^{3}$ with SU(2) via the map $h:S^{3}\subset \mathbb{C}^{2}\rightarrow \text{SU(2)}$ defined in Euler coordinates by 
\[
(e^{i(\theta + \psi)}\cos(\phi),e^{i(\theta-\psi)}\sin(\phi))\mapsto \left[ {\begin{array}{cc}
   e^{i(\theta + \psi)}\cos(\phi) & -e^{-i(\theta-\psi)}\sin(\phi) \\
   e^{i(\theta-\psi)}\sin(\phi) & e^{-i(\theta +\psi)}\cos(\phi) \\
  \end{array} } \right],
\]
\noindent where $\phi\in [0,\pi/2), \psi\in [0,\pi), \theta\in [0,2\pi)$. By using the Maurer-Cartan formalism we find a basis of left-invariant 1-forms $\{\sigma_{i}\}$ given by (see also \cite{gibbons})
\begin{align*}
\sigma_{1} &= \sin(2\theta)d\phi - \sin(2\phi)\cos(2\theta)d\psi, \\ 
\sigma_{2} &=  \cos(2\theta)d\phi + \sin(2\phi)\sin(2\theta)d\psi, \\ 
\sigma_{3} &= \cos(2\phi)d\psi + d\theta,
\end{align*}
\noindent with dual left-invariant frame 
\begin{align}\label{leftinvariantframe}
X_{1} &= \sin(2\theta)\partial_{\phi} - \frac{\cos(2\theta)}{\sin(2\phi)}\partial_{\psi} + \cot(\phi)\cos(2\theta)\partial_{\theta}, \notag \\ 
X_{2} &=  \cos(2\theta)\partial_{\phi} + \frac{\sin(2\theta)}{\sin(2\phi)}\partial_{\psi} -\cot(2\phi)\sin(2\theta)\partial_{\theta}, \\ \notag
X_{3} &= \partial_{\theta}.
\end{align}
\noindent The vector fields $\{X_{i}\}$ satisfy the relations $[X_{i},X_{j}] = 2\epsilon_{ijk}X_{k}$, with $\epsilon_{ijk}$ the anti-symmetric symbol. Consider a left-invariant Riemannian metric $\bar{g}$ on $S^{3} = \text{SU(2)}$. Such metric can be diagonalized along some Milnor frame \cite[Chapter 1]{IRF}. Without loss of generality we may identify the Milnor frame with the left-invariant frame $\{X_{i}\}$ and write $\bar{g}$ as 
\[
\bar{g} \doteq \lambda_{1}^{2}\,\sigma_{1}\otimes\sigma_{1} + \lambda_{2}^{2}\,\sigma_{2}\otimes\sigma_{2} + \lambda_{3}^{3}\,\sigma_{3}\otimes\sigma_{3},
\]
\noindent for some positive constants $\{\lambda_{j}\}$. From the Koszul formula it follows that the left-invariant vector field $X_{3}$ is a $\bar{g}$-Killing vector if and only if $\lambda_{1} = \lambda_{2}$. In this case the metric $\bar{g}$ is also invariant under rotations of the Hopf-fibres and its Lie algebra of Killing vectors contains a copy of $\mathfrak{u}(2)$. We note that the choice $\lambda_{1} = \lambda_{2} = \lambda_{3} = 1$ recovers the round metric of constant curvature one while the choice $\lambda_{1} = \lambda_{2} = 1$ and $\lambda_{3} = \varepsilon < 1$ parametrizes the classic family of Berger spheres collapsing along the Hopf fibres as $\varepsilon \rightarrow 0$.
\\Let now $g$ be a Riemannian metric on $\R^{4}=\mathbb{C}^{2}$ invariant under the cohomogeneity 1 left-action of SU(2). The action admits one singular orbit consisting of the origin $\origin$ of $\R^{4}$. All the geometric information can then be obtained by restricting $g$ along a radial geodesic starting at $\origin$ and meeting the 3-hyperspheres orthogonally. Namely, on $\R^{4}\setminus\{\origin\}$ we have (see also \cite{gibbons})
\begin{align}\label{initialmetric}
\begin{split}
g &= \xi^{2}(x)dx\otimes dx + \bar{g}_{x} \\
&= \xi^{2}(x)dx\otimes dx + a^{2}(x)\,\sigma_{1}\otimes\sigma_{1} + \bb^{2}(x)\,\sigma_{2}\otimes\sigma_{2} + \cc^{2}(x)\,\sigma_{3}\otimes\sigma_{3},
\end{split}
\end{align}
\noindent where $\xi,a,b,c:(0,+\infty)\rightarrow (0,+\infty)$ are smooth radial functions. Since we are interested in SU(2)-invariant metrics on $\R^{4}$ whose restrictions to any hypersphere are Berger metrics we further require the metric $g$ to be invariant under rotations of the Hopf fibres. Equivalently, we assume that $a\equiv \bb$ in \eqref{initialmetric}. Moreover, we also restrict the analysis to those metrics satisfying the ordering constraint $\cc\leq\bb$.
For any radial coordinate $x > 0$ the metric 
\[
\bar{g}_{x} \doteq \bb^{2}(x)\,(\sigma_{1}\otimes\sigma_{1} + \sigma_{2}\otimes\sigma_{2}) + \cc^{2}(x)\,\sigma_{3}\otimes\sigma_{3}
\]
\noindent is then a left-invariant metric on the Euclidean hypersphere $S(\origin,x)$ with the $S^{1}$-fiber squashed by a factor $\cc(x)/\bb(x)\in (0,1]$. 
\\If we denote the $g$-distance from the origin by $s$, then we can write $g$ as 
\begin{equation}\label{initialmetricscoordinate}
g = ds\otimes ds + b^{2}(s)\,\left(\sigma_{1}\otimes \sigma_{1} + \sigma_{2}\otimes \sigma_{2}\right) + \cc^{2}(s)\,\sigma_{3}\otimes\sigma_{3}.
\end{equation}
\noindent We note that given a radial map $f$ on $\R^{4}$, then we interpret $f = f(s) = f(s(x))$ as a function of $x$ unless otherwise stated. We also have the relation  
\begin{equation}\label{changevariable}
\partial_{s} = \frac{1}{\xi(x)}\partial_{x}.
\end{equation} 
\noindent It is a general fact that $g$ in \eqref{initialmetricscoordinate} extends smoothly at the origin $\origin\in\R^{4}$ if and only if $\bb$ and $\cc$ extend to smooth odd functions at $x = 0$ and the following is satisfied:
\begin{equation}\label{smoothnessorigin}
\lim_{s\rightarrow 0}\frac{d \bb}{ds}(s)=\lim_{s\rightarrow 0}\frac{d \cc}{ds}(s) = 1. 
\end{equation}  
\noindent We note that the underlying topology plays a role in the analysis of the Ricci flow dynamics via the boundary conditions above. 
\\We point out that, as previously observed, the Lie algebra of Killing vectors for $g$ contains a copy of $\mathfrak{u}(2)$. Indeed, any U(2)-invariant metric on $\R^{4}=\mathbb{C}^{2}$ can be written as in \eqref{initialmetricscoordinate}, up to choosing a suitable Milnor frame (see, e.g., the case $k = 1$ in Section 2.2 of \cite{appleton2}). 
In analogy with \cite{IKS1} we refer to any (U(2)-invariant) metric on $\R^{4}$ of the form \eqref{initialmetricscoordinate} and satisfying $\cc\leq \bb$ as a (generalized) \emph{warped Berger} metric.
\subsection{Curvature terms.} Given a warped Berger metric $g_{0}$ a simple application of the Koszul formula (see also \cite[Appendix A]{IKS1}) allows to compute the sectional curvatures of the vertical planes
\begin{align}
k_{12} &= \frac{4\bb^2 - 3\cc^2}{\bb^4} - \frac{\bb_{s}^{2}}{\bb^{2}},\label{sectionalvertical12} \\
k_{13} = k_{23} &= \frac{\cc^2}{\bb^4} - \frac{\bb_{s}\cc_{s}}{\bb\cc}\, \label{sectionalvertical13} 
\end{align}
\noindent and of the mixed ones
\begin{align}\label{sectionalhorizontal01}
k_{01} = k_{02} &= -\frac{\bb_{ss}}{\bb}\, ,\\
k_{03} &= -\frac{\cc_{ss}}{\cc}\,. \label{sectionalhorizontal03}
\end{align}
\noindent We also note that we can write the scalar curvature as 
\begin{equation}\label{scalarcurvature}
R_{g} = 2(k_{01} + k_{02} + k_{03} + k_{12} + k_{13} + k_{23}).
\end{equation}
\subsection{Initial data for the Ricci flow.}
In this work we study the Ricci flow problem on $\R^{4}$ with initial condition given by a warped Berger metric $g_{0}$. We first assume that $g_{0}$ is bounded by a cylinder at infinity so that the Ricci flow evolving from $g_{0}$ always encounters a finite-time singularity. 
\\According to \cite{exampleneck}, \cite{work} if $(\R^{4},g_{0})$ contains necks, then the Ricci flow solution may be Type-I and converge to a shrinking cylinder once rescaled. In order to construct Type-II singularities we thus need to exclude these initial geometries. A generalization of the notion of neck discussed in \cite{exampleneck} to the SU(2)-invariant setting consists in considering whether the mean curvature of embedded hyperspheres changes sign. Namely, we introduce the quantity $H:x\rightarrow (2\bb_{s}/\bb + \cc_{s}/\cc)(x)$ representing the mean curvature of the centred Euclidean sphere of Euclidean radius $x$ with respect to $g_{0}$. We say that $g_{0}$ does not have necks when the mean curvature $H$ is nonnegative on $\R^{4}\setminus\{\origin\}$.
\\While in the rotationally symmetric setting a Sturmian type of argument guarantees that minimal hyperspheres cannot appear along the flow, one might expect that in the SU(2)-case the mean curvature could generally change sign along the flow. In order to prevent the latter phenomenon from happening, we require the spatial derivative $\bb_{s}$ to be nonnegative as well. 
\begin{definitionn}\label{definitionGm}
We let $\G$ be the set of complete \emph{bounded curvature} warped Berger metrics on $\R^{4}$ satisfying the following conditions:
\begin{itemize}
\item[(i)] $\bb_{s}\geq 0$, $H\geq 0$.
\item[(ii)] $\sup_{p\in\R^{4}}\bb(p) < \infty$.
\end{itemize} 
\end{definitionn}
\begin{remark}
From the formula for the mean curvature of the embedded hyperspheres we immediately derive that the assumption (i) in Definition \ref{definitionGm} is \emph{weaker} than asking for the monotonicity of both $\bb$ and $\cc$.
\end{remark}
In the second class of initial data for the Ricci flow we consider warped Berger metrics without necks but whose behaviour at infinity is not controlled by that of a cylinder. Namely, we require the curvature to decay to zero and the Hopf fibres to be not collapsed.
\begin{definitionn}\label{definitionGin} We let $\Gin$ be the set of complete warped Berger metrics $g$ on $\R^{4}$ with \emph{positive injectivity radius} and satisfying the following conditions:
\begin{itemize}
\item[(i)] $\bb_{s} \geq 0$, $H\geq 0$.
\item[(ii)] $\lvert\text{Rm}_{g}\rvert_{g}(s)\rightarrow 0$ as $s\rightarrow \infty$ and there exist $\mu > 0$ and $s_{0} > 0$ such that $\cc(s) \geq \mu$ for any $s\geq s_{0}$.
\end{itemize}
\end{definitionn}
\begin{remark}
We point out that the sets $\G$ and $\Gin$ are disjoint. For if $g_{0}\in\G\cap \Gin$, then $\bb < m$, for some $m > 0$, and hence by \eqref{sectionalvertical12} we find
\[
\lvert 4 - 3\frac{\cc^{2}}{\bb^{2}} - \bb_{s}^{2} \rvert \leq \lvert k_{12}\rvert \bb^{2} < m^{2} \lvert k_{12}\rvert. 
\]
\noindent Since the curvature is decaying to zero at infinity and $\cc\leq \bb$ we see that $\lvert\bb_{s}\rvert \geq 1/2$ outside some Euclidean ball $B(\origin,r)$, for $r$ large enough. Therefore $\bb(s)\rightarrow \infty$ and this is a contradiction. We conclude that if $g_{0}\in\Gin$, then $\bb(s)\rightarrow \infty$ being $\bb_{s}\geq 0$.
\end{remark}
\begin{remark} 
We observe that the well known Taub-NUT metric on $\R^{4}$ \cite{hawking} is a hyperk\"ahler metric belonging to $\Gin$ since the curvature decays to zero at cubic rate while both $\bb$ and $\cc$ are increasing.
\end{remark}

\subsection{The Ricci flow equations.}
If $g_{0}$ is a complete bounded curvature warped Berger metric on $\R^{4}$, then by \cite{shi} there exists a smooth complete solution to the Ricci flow problem. Such solution is unique among those complete solutions with bounded curvature \cite{uniqueness}. Therefore we have a well-defined notion of maximal time of existence for the Ricci flow solution. In the following we always let $(\R^{4},g(t))_{0\leq t < T}$ be the maximal complete, bounded curvature solution to the Ricci flow starting at some complete bounded curvature Berger metric $g_{0}$.    
\\ The Ricci flow diffeomorphism invariance and the uniqueness property ensure that the symmetries persist. Moreover, since the Ricci tensor is diagonal along the global frame $\{\partial_{x}, X_{1}, X_{2}, X_{3}\}$, the maximal Ricci flow solution starting at $g_{0}$ must be of the form 
\begin{align}\label{ricciflowsolution}
\begin{split}
g(t) &= \xi^{2}(x,t)\,dx\otimes dx + b^{2}(x,t)\,(\sigma_{1}\otimes\sigma_{1} + \sigma_{2}\otimes\sigma_{2}) + \cc^{2}(x,t)\,\sigma_{3}\otimes\sigma_{3} \\
&= ds\otimes ds + b^{2}(s(x),t)\,(\sigma_{1}\otimes\sigma_{1} + \sigma_{2}\otimes\sigma_{2}) + \cc^{2}(s(x),t)\,\sigma_{3}\otimes\sigma_{3},
\end{split}
\end{align}
\noindent where $s=s(x,t)$ is the $g(t)$-distance from the origin. In terms of the variables $s$ and $t$ the Ricci flow equations can be written as
\begin{align}\label{Ricciflowpdes1}
\bb_{t} &= \bb_{ss} + \left(\frac{\cc_{s}}{\cc} + \frac{\bb_{s}}{\bb} \right)\bb_{s} +\frac{2(\cc^{2} - 2\bb^{2})}{\bb^{3}} \\
\cc_{t} &= \cc_{ss} + 2\frac{\bb_{s}}{\bb}\cc_{s} - \frac{2\cc^{3}}{\bb^{4}}. \label{Ricciflowpdes2}
\end{align}
\noindent The choice of a meaningful geometric coordinate $s$ provides us with a parabolic form of the Ricci flow equations. However, we get a non vanishing commutator between $\partial_{t}$ and $\partial_{s}$ given by
\begin{equation}\label{commutatorformula}
\left[ \frac{\partial }{\partial t}, \frac{\partial}{\partial s} \right]    = -(\text{ln}(\xi))_{t}\frac{\partial}{\partial s} = - \left(2\frac{\bb_{ss}}{\bb} + \frac{\cc_{ss}}{\cc} \right)\frac{\partial}{\partial s}.
\end{equation}
\noindent We also report the formula for the (time-dependent) Laplacian along the Ricci flow. For any smooth function $f\in C^{\infty}(\mathbb{R}^{4})$ we have
\begin{equation}\label{formulalaplacian}
\Delta f \equiv f_{ss} + \left (2\frac{\bb_{s}}{\bb} + \frac{\cc_{s}}{\cc} \right )f_{s}.
\end{equation}
We dedicate the end of this subsection to proving that the Ricci flow solution $g(t)$ starting at $g_{0}$ remains a warped Berger metric until its maximal time of existence $T\leq \infty$. More precisely, we show the following:
\\
\begin{lemma}\label{consistencyRFassumption1}
Let $(\R^{4},g(t))_{0\leq t < T}$ be the maximal solution to the Ricci flow starting at some complete bounded curvature warped Berger metric $g_{0}$ and let $\varepsilon\doteq \inf_{x\geq 0}\cc/\bb(x,0)$. Then for any $(p,t)\in\R^{4}\times [0,T)$ we have
\[
\varepsilon \leq \frac{\cc}{\bb}(p,t)\leq 1.
\]
\end{lemma}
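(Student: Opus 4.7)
The plan is to derive a scalar parabolic evolution equation for the ratio $w \doteq \cc/\bb$ from the Ricci flow system (16), (17) and to apply a maximum principle driven by the sign of the reaction term at $w=1$ and on the interval $(0,1)$. A direct computation yields
$$
w_{t} \;=\; w_{ss} \;+\; 3\frac{\bb_{s}}{\bb}\,w_{s} \;+\; \frac{4\,w\,(1-w^{2})}{\bb^{2}}.
$$
Both inequalities will be read off the sign of the zeroth-order term, once the PDE above is in hand.

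For the upper bound $w \leq 1$: the Berger condition $\cc \leq \bb$ gives $w \leq 1$ at $t=0$. Suppose $t_{0}\in (0,T)$ is the first time at which $w$ reaches the level $1$ at some point $p_{0}$. The smoothness conditions at the origin (that $\bb_{s}\to 1$ and $\cc_{s}\to 1$ as $s\to 0$, preserved by the flow) force $w(\origin,t)\equiv 1$ for every $t$, so either $p_{0}=\origin$ (no violation) or $p_{0}$ is an interior point at which $w_{s}=0$, $w_{ss}\leq 0$ and the reaction term vanishes. Then $w_{t}(p_{0},t_{0})\leq 0$, contradicting that $w$ crosses $1$ from below at $(p_{0},t_{0})$. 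Hence $w\leq 1$ throughout.

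For the lower bound $w \geq \varepsilon$: we may assume $\varepsilon > 0$, since otherwise the bound is trivial because $w>0$. With $0 < w \leq 1$ in hand, the reaction term is nonnegative on the solution. If $t_{0}$ is the first time at which $w$ drops to the value $\varepsilon \leq 1$ at some point $p_{0}$, then $p_{0}\neq \origin$ (as $w(\origin,t)=1$), so $w_{s}(p_{0},t_{0})=0$, $w_{ss}(p_{0},t_{0})\geq 0$, and the reaction term is nonnegative, forcing $w_{t}(p_{0},t_{0})\geq 0$. This contradicts first-touch, so $w \geq \varepsilon$ persists.

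The main obstacle is to justify the maximum principle on the noncompact manifold $\R^{4}$, with the additional subtlety that the coefficient $1/\bb^{2}$ is singular at $\origin$. The singularity at $\origin$ is harmless because $w$ extends smoothly there with value $1$, so no extremum strictly above or below $1$ can be realized at $\origin$, and away from $\origin$ the PDE is parabolic with smooth bounded coefficients on every compact subset of $(\R^{4}\setminus\{\origin\})\times [0,T']$. For hypothetical extrema at spatial infinity, we invoke that $g(t)$ has uniformly bounded curvature on $[0,T']$ for each $T' < T$ (Shi's derivative estimates) and that $w$ is a priori bounded in $[0,1]$ once the upper bound is in place, so a standard weak maximum principle for scalar equations along complete Ricci flows of bounded curvature (e.g.\ Chow--Knopf) applies directly to $w$.
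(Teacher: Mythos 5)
Your proposal is correct and follows essentially the same route as the paper: write a scalar parabolic equation for the ratio, identify the sign of the reaction term at the relevant level, and close the argument with a maximum principle on the complete noncompact manifold using boundedness of curvature. Your computed PDE for $w=\cc/\bb$ is correct and is equivalent, upon taking logarithms, to the paper's equation for $f=\log(\cc/\bb)$, namely $f_{t}=\Delta f+\frac{4}{\bb^{2}}(1-\cc^{2}/\bb^{2})$; the paper prefers $\log w$ because it makes the second-order part precisely the geometric Laplacian (the first-order terms in your equation for $w$ are $3(\bb_{s}/\bb)w_{s}$ rather than $(2\bb_{s}/\bb+\cc_{s}/\cc)w_{s}$, and in fact $w_{t}=\Delta w - w f_{s}^{2} + 4w(1-w^{2})/\bb^{2}$), which makes the citation of Hamilton's noncompact maximum principle cleaner. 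The one place where your write-up is actually muddled is the justification of an \emph{a priori} bound needed to invoke the weak maximum principle: you say ``$w$ is a priori bounded in $[0,1]$ once the upper bound is in place,'' but the upper bound is precisely what is being proved, so this is circular as written. What is actually needed, and what the paper supplies, is the observation that by Shi's estimates the curvature is bounded on each time slice $\R^{4}\times\{t\}$, and from the expression \eqref{sectionalvertical12} for $k_{12}$ this forces $\cc/\bb(\cdot,t)\leq\alpha(t)<\infty$ for some time-dependent constant $\alpha(t)$; this is the bound that licenses the maximum principle for the upper bound step, after which $[0,1]$ can be used for the lower bound step as you do. With that small fix your argument is complete.
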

\begin{proof}
We first verify that the ordering $\cc\leq\bb$ is preserved along the flow for any $t\in[0,T)$. By \cite{shi} the curvature is bounded at any time slice $\R^{4}\times \{t\}$, with $t\in [0,T)$; thus from the Ricci flow equations we find that there exists some time dependent positive constant $\alpha(t)$ such that $\cc/\bb(\cdot,t)\leq \alpha(t) < \infty$ on $\R^{4}$.
As long as a (smooth) solution exists the boundary conditions \eqref{smoothnessorigin} are satisfied, which then imply that the function $f\doteq \log(\cc/\bb)$ is smoothly defined on $\R^{4}$ and equal to zero at the origin for any time. From the evolution equations \eqref{Ricciflowpdes1}, \eqref{Ricciflowpdes2} and the formula for the Laplacian \eqref{formulalaplacian} we get
\begin{equation}\label{evolutionlogcb}
f_{t} = \Delta f + \frac{4}{\bb^{2}}\left(1 - \frac{\cc^{2}}{\bb^{2}}\right).
\end{equation}
\noindent Therefore whenever $\cc/\bb > 1$ we find
\[
f_{t} < \Delta f.
\]
\noindent We can then apply the maximum principle \cite[Corollary 7.45]{hamiltonricciflow} and conclude that since $\cc/\bb(\cdot,0)\leq 1$, the same ordering persists along the flow. In fact, once we know that $\cc\leq \bb$ is preserved in time, a standard application of the strong maximum principle shows that if $c = b$ at some $(p_{0},t_{0})\in \R^{4}\times (0,T)$, then $\cc = \bb$ in a space-time neighbourhood of the point and thus $\cc=\bb$ everywhere for all earlier times by real analyticity of solutions to the Ricci flow \cite{bando}.
\\We now let $\varepsilon\in [0,1)$ be defined as in the statement. If $\varepsilon = 0$ there is nothing to show; we can then take $\varepsilon > 0$. Again from \cite{shi} it follows that $\cc/\bb(\cdot,t)\geq \alpha(t) > 0$; if we define $f\doteq \log(\varepsilon^{-1}\cc/\bb)$, since we have just shown that $\cc\leq\bb$ along the solution, we obtain
\[
f_{t} = \Delta f + \frac{4}{\bb^{2}}\left(1 - \frac{\cc^{2}}{\bb^{2}}\right) \geq \Delta f.
\]
\noindent We can apply the maximum principle and conclude that $\cc(\cdot,t)\geq \varepsilon\bb(\cdot,t)$ for any $t\in [0,T)$.
\end{proof}
\section{Ricci flow without necks}
In this section we show that the monotonicity assumptions $\bb_{s}\geq 0$ and $H\geq 0$ are preserved along the Ricci flow solution. The main ingredient is given by a maximum principle for systems of parabolic equations \cite[Theorem 13, p. 190]{protter} that recently Appleton used to derive similar conclusions for a family of $U(2)$-invariant Ricci flows with cylindrical asymptotics \cite{appleton2}. In the following we mainly adapt the argument in \cite{appleton2} to the topology of $\R^{4}$, i.e. to the boundary conditions given in \eqref{smoothnessorigin}. 
\subsection{Basic estimates.} Let $g_{0}$ be a complete bounded curvature Berger metric on $\R^{4}$. We collect a few preliminary bounds that are necessary to apply the maximum principle for systems to the evolution equations of $\cc\bb_{s}/\bb$ and $\cc H$. 
\begin{lemma}\label{basicestimate1}
For any $x_{0} > 0$ there exists $\delta > 0$ such that $\bb(x)\geq \delta > 0$ for all $x\geq x_{0}$.
\end{lemma}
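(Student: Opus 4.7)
Since $g_{0}$ is a smooth Riemannian metric on $\R^{4}\setminus\{\origin\}$, the function $b$ is continuous and strictly positive on $(0,\infty)$, so it is automatically bounded below on any compact interval $[x_{0},X]$ by a positive constant. The lemma therefore reduces to showing that $\liminf_{x\to\infty} b(x)>0$. Passing to the arclength coordinate $s$, which by completeness of $g_{0}$ ranges over all of $[0,\infty)$, it suffices to prove $\liminf_{s\to\infty} b(s)>0$.

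The first step of the plan is to extract a pointwise lower bound on $|b_{s}|$ in regions where $b$ is small, using bounded curvature. From the vertical sectional curvature identity \eqref{sectionalvertical12} together with the ordering $\cc\leq \bb$, which gives $(4\bb^{2}-3\cc^{2})/\bb^{4}\geq 1/\bb^{2}$, one obtains
\[
\bb_{s}^{2} \;=\; \frac{4\bb^{2}-3\cc^{2}}{\bb^{2}} - \bb^{2}k_{12} \;\geq\; 1 - K\bb^{2},
\]
where $K\doteq\sup_{\R^{4}}|\mathrm{Rm}_{g_{0}}|_{g_{0}}<\infty$. Setting $\varepsilon_{0}\doteq (2K)^{-1/2}$, this yields $|\bb_{s}(s)|\geq 1/\sqrt{2}$ at every $s$ where $\bb(s)\leq \varepsilon_{0}$.

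The second step is a contradiction argument driven by this gradient bound. Assume for contradiction that $\liminf_{s\to\infty}\bb(s)=0$, and fix $s_{0}>0$ corresponding to $x_{0}$ and any $\varepsilon\in(0,\varepsilon_{0})$ with $\varepsilon<\bb(s_{0})$. The closed sublevel set $E\doteq\{s\geq s_{0}:\bb(s)\leq\varepsilon\}$ is then non-empty and unbounded, and does not meet the endpoint $s_{0}$. On each connected component of $E$, the continuous function $\bb_{s}$ satisfies $|\bb_{s}|\geq 1/\sqrt{2}$, hence has constant sign. A bounded component $[\alpha,\beta]\subset(s_{0},\infty)$ would force $\bb(\alpha)=\bb(\beta)=\varepsilon$ while $\bb_{s}$ retains constant sign of magnitude $\geq 1/\sqrt{2}$, an impossibility by the fundamental theorem of calculus. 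Therefore $E$ must contain an unbounded component $[\alpha,\infty)$: on it, the alternative $\bb_{s}\leq -1/\sqrt{2}$ drives $\bb(s)\to-\infty$ linearly in $s$, contradicting $\bb>0$, while $\bb_{s}\geq 1/\sqrt{2}$ forces $\bb$ to exceed $\varepsilon$, contradicting membership in $E$. Either case yields a contradiction, so $\liminf_{s\to\infty}\bb(s)>0$ and the lemma follows.

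The main subtlety I anticipate is the boundary behaviour of components of $E$ near $s_{0}$, where the dichotomy on the sign of $\bb_{s}$ is not available; this is circumvented by shrinking $\varepsilon$ below $\bb(s_{0})$, which is legitimate since $\bb$ is smooth and positive away from the origin.
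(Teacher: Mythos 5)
Your argument is essentially the paper's: both proofs exploit the identity \eqref{sectionalvertical12} together with $\cc\leq\bb$ and bounded curvature to force $|\bb_{s}|$ away from zero wherever $\bb$ is small, and then derive a contradiction from the resulting monotonicity. The paper packages the second half slightly differently (it splits into the case $\bb\leq\delta$ eventually and the oscillating case, and in the latter evaluates the curvature constraint at a sequence of interior local minima where $\bb_{s}=0$), whereas you phrase it in terms of connected components of the sublevel set $E$, but the underlying mechanism is the same.

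There is one small lacuna in your component analysis. The FTC step rules out bounded components $[\alpha,\beta]$ with $\alpha<\beta$, but says nothing about singleton components $\{\alpha\}$: there $\bb(\alpha)=\varepsilon$ and the integral is vacuously zero, so ``an impossibility by the fundamental theorem of calculus'' does not apply, and the inference ``Therefore $E$ must contain an unbounded component'' does not yet follow. Since $E$ could a priori be a union of isolated points marching off to infinity, this case must be closed off. The fix is immediate and uses only tools already in your argument: a singleton component of $E$ inside $(s_{0},\infty)$ is a strict interior local minimum of $\bb$, so $\bb_{s}(\alpha)=0$, contradicting the gradient bound $|\bb_{s}(\alpha)|\geq 1/\sqrt{2}$. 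Alternatively, observe that $\liminf\bb=0$ supplies points with $\bb<\varepsilon$ strictly, and by continuity each such point lies in a non-degenerate component of $E$, so it suffices to treat non-degenerate components. Either patch makes the proof complete; once inserted, your argument is correct and in substance identical to the paper's.
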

\begin{proof}
By assumption there exists $\alpha > \sup_{\R^{4}}\lvert\text{Rm}_{g_{0}}\rvert_{g_{0}}$. Suppose for a contradiction that there exists $x_{0}$ such that $\bb(x) \leq \delta$ for any $x\geq x_{0}$, with $\delta^{2}\alpha < 1/2$. From \eqref{sectionalvertical12} we derive
\[
\lvert 4 - 3\frac{\cc^{2}}{\bb^{2}} - \bb_{s}^{2}\rvert \leq \alpha \bb^{2} \leq \frac{1}{2}
\]
\noindent for all $x\geq x_{0}$. Since $\cc \leq \bb$, we see that $\bb_{s}^{2}(x)\geq 1/2$ for any $x\geq x_{0}$ which contradicts the fact that $\bb$ is bounded. Therefore there exists a sequence of points $p_{j}\rightarrow \infty$ such that $\bb(p_{j}) > \delta$, with $\delta$ given above. Assume that there exists a sequence $q_{j}\rightarrow \infty$ such that $\bb(q_{j}) \leq \delta$. It follows that there exists a sequence of minima $\tilde{q}_{j}\rightarrow \infty$ such that $\bb(\tilde{q}_{j})\leq \delta$. From \eqref{sectionalvertical12} we get
\[
\lvert 4 - 3\frac{\cc^{2}}{\bb^{2}} - \bb_{s}^{2}\rvert (\tilde{q}_{j}) \equiv \lvert 4 - 3\frac{\cc^{2}}{\bb^{2}}\rvert (\tilde{q}_{j}) \leq \alpha\bb^{2}(\tilde{q}_{j})\leq \frac{1}{2},
\] 
\noindent which is not possible. The proof is then complete.
\end{proof}
A simple consequence of the previous Lemma is the following
\begin{corollaryy}\label{bbs/bbbounded}
Given $x_{0} > 0$ there exists $\alpha > 0$ such that 
\[
\sup_{\R^{4}\setminus B(\origin,x_{0})}\left \vert \frac{\bb_{s}}{\bb}\right \vert \leq \alpha.
\]
\end{corollaryy}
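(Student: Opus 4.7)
The plan is to read off the bound directly from the sectional curvature formula \eqref{sectionalvertical12}, using the previous lemma to get a positive lower bound on $\bb$ away from the origin.

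First, I would fix $x_0 > 0$ and apply Lemma \ref{basicestimate1} to obtain some $\delta > 0$ with $\bb(x) \geq \delta$ for all $x \geq x_0$. Next, I would use the identity
\[
\frac{\bb_s^2}{\bb^2} \;=\; \frac{4\bb^2 - 3\cc^2}{\bb^4} \;-\; k_{12}
\]
coming from \eqref{sectionalvertical12}. Since $g_0$ is a warped Berger metric the ordering $\cc \leq \bb$ holds, so the first term on the right is nonnegative and bounded above by $4/\bb^2 \leq 4/\delta^2$ on $\R^4 \setminus B(\origin, x_0)$. Combined with $|k_{12}| \leq |\mathrm{Rm}_{g_0}|_{g_0} \leq \sup_{\R^4} |\mathrm{Rm}_{g_0}|_{g_0} < \infty$ (since $g_0$ has bounded curvature), this gives
\[
\frac{\bb_s^2}{\bb^2}(x) \;\leq\; \frac{4}{\delta^2} + \sup_{\R^4}|\mathrm{Rm}_{g_0}|_{g_0}
\]
for every $x \geq x_0$, from which the bound $|\bb_s/\bb| \leq \alpha$ with $\alpha$ the square root of the right-hand side follows.

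There is essentially no obstacle here: the argument is a one-line consequence of Lemma \ref{basicestimate1}, the curvature-bound hypothesis on $g_0$, and the algebraic fact that $\cc \leq \bb$. The only mild point to check is that $\bb$ does not vanish away from the origin, which is exactly what the preceding lemma provides.
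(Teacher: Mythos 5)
Your proof is correct and follows essentially the same route as the paper's: both solve \eqref{sectionalvertical12} for $\bb_s^2/\bb^2$, invoke Lemma~\ref{basicestimate1} for the positive lower bound on $\bb$ away from the origin, and use $\cc \leq \bb$ together with the bounded-curvature hypothesis to control the remaining terms. You have simply spelled out the final inequality a bit more explicitly than the paper does.
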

\begin{proof}
From \eqref{sectionalvertical12} we derive
\[
\frac{\bb_{s}^{2}}{\bb^{2}} \leq \frac{4\bb^{2} - 3\cc^{2}}{\bb^{4}} + \alpha.
\]
\noindent Given $x_{0} > 0$ we may apply Lemma \ref{basicestimate1} and conclude the proof.
\end{proof}
We also need to check that both $\bb_{s}$ and $\cc_{s}$ are exponentially bounded at spatial infinity.
\begin{lemma}\label{derivativesexponentiallybounded}
There exist $M > 0$ and $\alpha > 0$ such that 
\[ 
\lvert \cc_{s} \rvert + \lvert \bb_{s} \rvert \leq M\exp(\alpha s).
\]
\end{lemma}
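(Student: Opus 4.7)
The plan is to combine the boundedness of the curvature with the ordering $c\leq b$ proved in Lemma \ref{consistencyRFassumption1} and the smoothness boundary conditions \eqref{smoothnessorigin} at the origin. First I would control $b$ itself by an exponential in $s$, which then yields the bound on $b_{s}$ immediately; next I would extract a pointwise bound on $c_{ss}$ from the sectional curvature identity \eqref{sectionalhorizontal03}, and finally integrate in $s$ from the origin using $c_{s}(0)=1$ to obtain the bound on $c_{s}$.

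For the first step, Corollary \ref{bbs/bbbounded} supplies some $x_{0}>0$ and $\alpha_{0}>0$ with $|b_{s}/b|\leq \alpha_{0}$ on $\R^{4}\setminus B(\origin,x_{0})$. Integrating the resulting ODE $|(\log b)_{s}|\leq \alpha_{0}$ from the $s$-value corresponding to $x_{0}$ yields $b(s)\leq C_{1}e^{\alpha_{0}s}$ outside $B(\origin,x_{0})$, and continuity of $b$ on the compact set $\overline{B(\origin,x_{0})}$ (including the smooth extension at the origin) gives the same bound globally after possibly enlarging $C_{1}$. Multiplying the estimate $|b_{s}|\leq \alpha_{0}b$ by this exponential bound gives $|b_{s}|\leq C_{2}e^{\alpha_{0}s}$ outside the ball, and $|b_{s}|$ is automatically bounded on the compact piece, so we obtain $|b_{s}|\leq M_{1}e^{\alpha_{0}s}$ on all of $\R^{4}$.

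For the second step, I would rewrite \eqref{sectionalhorizontal03} as $c_{ss}=-k_{03}c$. The hypothesis that $g_{0}$ has bounded curvature gives $|k_{03}|\leq K$ for some $K>0$, and Lemma \ref{consistencyRFassumption1} gives $c\leq b$; combining these with the bound on $b$ established above yields
\[
|c_{ss}(s)|\leq K\,c(s)\leq K\,b(s)\leq KC_{1}\,e^{\alpha_{0}s}.
\]
The smoothness condition \eqref{smoothnessorigin} provides the initial value $c_{s}(0)=1$, so integrating the inequality from $0$ to $s$ gives
\[
|c_{s}(s)|\leq 1+\int_{0}^{s}|c_{ss}(\sigma)|\,d\sigma\leq 1+\frac{KC_{1}}{\alpha_{0}}\bigl(e^{\alpha_{0}s}-1\bigr),
\]
hence $|c_{s}|\leq M_{2}e^{\alpha_{0}s}$ globally. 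Adding the two bounds produces $|c_{s}|+|b_{s}|\leq M e^{\alpha s}$ with $\alpha=\alpha_{0}$ and $M=M_{1}+M_{2}$.

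The only mildly delicate point is why we cannot attempt a shortcut that bounds $|c_{s}/c|$ directly: the identity \eqref{sectionalvertical13} for $k_{13}$ couples $b_{s}c_{s}$ through the factor $b_{s}$, which may vanish, so no direct analogue of Corollary \ref{bbs/bbbounded} for $c$ is available. Passing to the second derivative via $k_{03}$ and then integrating from the origin, where the regularity of $g_{0}$ pins down $c_{s}$, avoids this difficulty and is the key idea of the argument.
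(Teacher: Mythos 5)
Your argument is correct, and it runs a slightly different route from the paper for the $\bb$ part. The paper derives the exponential bound on $\bb$ directly from the mixed sectional curvature $k_{01} = -\bb_{ss}/\bb$: bounded curvature gives $|\bb_{ss}| \leq K\bb$, and a second-order ODE comparison (e.g.\ a Gronwall estimate on $\bb_{s}^{2} + K\bb^{2}$) yields that $\bb$ and $\bb_{s}$ are both exponentially bounded; $\cc_{s}$ is handled the same way through $k_{03}$. You instead reach the exponential bound on $\bb$ via the first-order estimate $|\bb_{s}/\bb|\leq\alpha_{0}$ of Corollary~\ref{bbs/bbbounded}, which ultimately rests on $k_{12}$ and the lower bound for $\bb$ from Lemma~\ref{basicestimate1}; the bound on $\bb_{s}$ then drops out by multiplication. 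For $\cc_{s}$ your treatment via $k_{03}$ and integration from the origin agrees with the paper's. Both routes are sound: yours is a first-order argument that leans on the already-established Corollary~\ref{bbs/bbbounded}, whereas the paper's is a self-contained second-order argument that does not need the lower bound on $\bb$ from Lemma~\ref{basicestimate1}. Your closing remark, that $k_{13}$ alone cannot give a bound on $|\cc_{s}/\cc|$ because it couples $\bb_{s}\cc_{s}$ and $\bb_{s}$ may vanish, is accurate; a direct bound on $|\cc_{s}/\cc|$ (Lemma~\ref{ccsccbounded}, imported from Appleton) does exist but comes later and by a different argument, so your decision to detour through $\cc_{ss}$ and integrate from the origin is the right move within this section's logical order.
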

\begin{proof}
According to \eqref{sectionalhorizontal01} and the uniform bound on the curvature we see that $\bb$ and hence $\bb_{ss}$ are exponentially bounded; thus the same holds for $\bb_{s}$ by integrating $\bb_{ss}$. Similar conclusions are satisfied by $\cc_{s}$.
\end{proof}
Finally a bound similar to Corollary \ref{bbs/bbbounded} is satisfied by $\cc_{s}/\cc$ as well. 
\begin{lemma}\label{ccsccbounded}
Given $x_{0} > 0$ there exists $\alpha > 0$ such that 
\[
\sup_{\R^{4}\setminus B(\origin,x_{0})}\left \vert \frac{\cc_{s}}{\cc}\right \vert \leq \alpha.
\]
\end{lemma}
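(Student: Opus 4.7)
The plan is to exploit the Riccati identity $v_s + v^2 = -k_{03}$ for $v:=\cc_s/\cc$, which follows from $k_{03}=-\cc_{ss}/\cc$ together with the chain rule. Combined with the uniform bound $|k_{03}|\leq K$ coming from the bounded-curvature hypothesis, this yields the two-sided Riccati inequality
\[
-K-v^2 \;\leq\; v_s \;\leq\; K-v^2.
\]
Since the metric is Riemannian, $\cc>0$ on $(0,\infty)$ and $v$ is smooth there, although $v$ itself blows up as $s\to 0^+$ (in fact $v\sim 1/s$ near the origin by \eqref{smoothnessorigin}). This singular behaviour at the origin is precisely why the final estimate must depend on $x_0$.

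First I would establish the uniform lower bound $v\geq -\sqrt{K}$ on all of $(0,\infty)$. Arguing by contradiction, suppose $v(s_*)<-\sqrt{K}$ at some $s_*>0$. Let $\tilde v$ solve $\tilde v_s = K-\tilde v^2$ with $\tilde v(s_*)=v(s_*)$, and set $w:=v-\tilde v$. Since $w(s_*)=0$ and $w_s+(v+\tilde v)w\leq 0$, a Grönwall argument gives $v\leq \tilde v$ for $s\geq s_*$. An explicit integration via the $\coth$-substitution shows that $\tilde v$ decreases to $-\infty$ in finite time $\tau^* = (2\sqrt K)^{-1}\log\bigl((|\tilde v(s_*)|+\sqrt K)/(|\tilde v(s_*)|-\sqrt K)\bigr)$; consequently $v$ would have to diverge to $-\infty$ at some finite $s\in (s_*,s_*+\tau^*]$, contradicting the smoothness of $\cc>0$ on $(0,\infty)$.

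Next I would obtain the upper bound by running the same Riccati comparison backwards in $s$. Fix $s_1>0$ and set $u(\tau):=v(s_1-\tau)$ for $\tau\in[0,s_1)$; the inequality $v_s\leq K-v^2$ becomes $u_\tau \geq u^2-K$. Comparing with the ODE $\tilde u_\tau=\tilde u^2-K$, $\tilde u(0)=v(s_1)$, a parallel Grönwall argument yields $u\geq \tilde u$. If one had $v(s_1)>\sqrt K\coth(\sqrt K\,s_1)$, then $\tilde u$ would blow up to $+\infty$ at some $\tau^*<s_1$, forcing $v$ to diverge at $s=s_1-\tau^*\in (0,s_1)$, again contradicting smoothness of $\cc$. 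Therefore $v(s_1)\leq \sqrt K\coth(\sqrt K\,s_1)$ for every $s_1>0$.

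Combining both bounds and restricting to $s\geq s_0:=s(x_0)>0$, one obtains $|v(s)|\leq \sqrt K\coth(\sqrt K\, s_0)$, proving the claim with $\alpha:=\sqrt K\coth(\sqrt K\, s_0)$. The main technical point is the bookkeeping of the Riccati comparisons in both time directions: one must verify that the blow-up times of the comparison ODEs really land inside an interval where $v$ is a priori smooth, i.e.\ strictly away from the singular locus $\{s=0\}$ where $v$ diverges unconditionally.
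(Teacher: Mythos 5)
The paper supplies no proof of this lemma; it simply cites \cite[Lemma 3.4]{appleton2}. Your self-contained Riccati argument is correct and, as far as I can tell, follows the same route as Appleton's. The key points all check out: the identity $v_s + v^2 = -k_{03}$ for $v=\cc_s/\cc$ follows from \eqref{sectionalhorizontal03}, the two-sided ODE inequality uses only that $g_0$ has bounded curvature, and the forward (resp.\ backward) comparison correctly forces a blow-down to $-\infty$ (resp.\ blow-up to $+\infty$) of the comparison solution at a finite $s>0$, which would force $\cc$ to vanish or $\cc_s/\cc$ to be singular at an interior point of $(0,\infty)$ --- impossible since the metric is smooth and positive-definite away from the origin. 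The two bounds combine to give $\lvert v(s)\rvert \le \sqrt{K}\coth(\sqrt{K}\,s_0)$ for $s\ge s_0 = s(x_0)$, which is exactly the claim, with the expected dependence of $\alpha$ on $x_0$ (necessary, since $v\sim 1/s$ near the origin by \eqref{smoothnessorigin}).

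Two minor remarks worth noting, though neither is a gap. First, the Grönwall comparisons require $v$ to remain $C^1$ on the relevant interval; this is automatic because $\cc>0$ is smooth on $\R^{4}\setminus\{\origin\}$, but you should say so explicitly since it is the very thing the contradiction exploits. Second, for the statement as written you only need the bounds for $s\ge s_0$, so the global lower bound $v\ge -\sqrt{K}$ on all of $(0,\infty)$ is slightly stronger than required; it is nonetheless true and harmless. It would also be cleaner to phrase the contradictions uniformly in terms of $\log\cc$: if $v\le\tilde v$ with $\int^{\bar s}\tilde v = -\infty$, then $\cc(\bar s)=0$; if $v$ has a pole from the right at $\bar s>0$, then $\cc_s/\cc$ is unbounded where the metric is smooth. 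Both are immediate, and this removes any ambiguity about what ``contradicting smoothness'' means.
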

\begin{proof}\cite[Lemma 3.4]{appleton2}.
\end{proof}
\subsection{Maximum principle for systems.} We consider the maximal Ricci flow solution $(\R^{4},g(t))_{0\leq t < T}$ evolving from a complete bounded curvature warped Berger metric $g_{0}$. We note that given $t_{0} < T$ then the estimates above hold uniformly for any $t\in [0,t_{0}]$ being the curvature uniformly bounded in the space-time region $\R^{4}\times [0,t_{0}]$. From the evolution equations \eqref{Ricciflowpdes1}, \eqref{Ricciflowpdes2}, the commutator formula \eqref{commutatorformula} and the expression for the mean curvature of embedded hyperspheres $H:(x,t)\rightarrow (2\bb_{s}/\bb + \cc_{s}/\cc)(x,t)$, we compute 
\begin{align}\label{evolutioncbbbs}
\left(\frac{\cc}{\bb}\bb_{s}\right)_{t} &= \left(\frac{\cc}{\bb}\bb_{s}\right)_{ss} + \left(\frac{\cc}{\bb}\bb_{s}\right)_{s}\left(2\frac{\bb_{s}}{\bb}- \frac{\cc_{s}}{\cc}\right) + \frac{1}{\bb^{2}}\left(\frac{\cc}{\bb}\bb_{s}\right)\left(8 - 10\frac{\cc^{2}}{\bb^{2}} - 2\bb_{s}^{2}\right)  + 4\frac{\cc^{2}}{\bb^{4}}\cc_{s}, \\ (\cc H)_{t} &= (\cc H)_{ss} + (cH)_{s} \left(2\frac{\bb_{s}}{\bb}- \frac{\cc_{s}}{\cc}\right) + 2\frac{\cc H}{\bb^{2}}\left(\frac{\cc^{2}}{\bb^{2}} - \bb_{s}^{2}\right) + \frac{16}{\bb^{2}}\left(\frac{\cc}{\bb}\bb_{s}\right)\left(1 - \frac{\cc^{2}}{\bb^{2}}\right). \label{evolutioncH}
\end{align}
\noindent We may now prove the main result of this section.
\begin{lemma}\label{cruciallemma}
Let $(\R^{4},g(t))_{0\leq t < T}$ be the maximal Ricci flow solution starting at a complete bounded curvature warped Berger metric $g_{0}$. If $(\cc/\bb)\bb_{s}(\cdot,0) \geq 0$ and $\cc H(\cdot,0)\geq 0$ then $(\cc/\bb)\bb_{s}(\cdot,t) > 0$ and $\cc H(\cdot,t) > 0$ for any $t\in(0,T)$.
\end{lemma}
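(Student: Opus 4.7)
The plan is to treat $u := (\cc/\bb)\,\bb_{s}$ and $v := \cc\, H$ as the two components of a weakly coupled parabolic system and apply the maximum principle for cooperative (quasi-monotone) systems from \cite[Theorem 13, p. 190]{protter}, adapting to the topology of $\R^{4}$ the strategy of Appleton \cite{appleton2}. The first step is algebraic: since $\cc_{s} = \cc H - 2(\cc/\bb)\bb_{s} = v - 2u$, the inhomogeneous term $4(\cc^{2}/\bb^{4})\cc_{s}$ in \eqref{evolutioncbbbs} splits and the pair \eqref{evolutioncbbbs}--\eqref{evolutioncH} can be recast as
\begin{align*}
u_{t} &= u_{ss} + \beta\,u_{s} + A_{11}\,u + A_{12}\,v,\\
v_{t} &= v_{ss} + \beta\,v_{s} + A_{21}\,u + A_{22}\,v,
\end{align*}
with $\beta = 2\bb_{s}/\bb - \cc_{s}/\cc$, $A_{12} = 4\cc^{2}/\bb^{4}$ and $A_{21} = 16(1-\cc^{2}/\bb^{2})/\bb^{2}$. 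By Lemma \ref{consistencyRFassumption1} we have $\cc \leq \bb$ along the flow, so both off-diagonal couplings $A_{12}, A_{21}$ are nonnegative, which is precisely the cooperativity hypothesis needed to compare $(u,v)$ component-wise against $(0,0)$.

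Next I would run the maximum principle. Fix any $t_{0} \in (0,T)$, on which by \cite{shi} the curvature, $\bb$ and $\cc$ are uniformly bounded. The boundary conditions \eqref{smoothnessorigin} imply that $u$ and $v$ extend smoothly to the origin with $u(\origin,t) = 1$ and $v(\origin,t) = 3$, so both are uniformly bounded below by a positive constant on a neighbourhood of $\origin$ throughout $[0,t_{0}]$. Away from $\origin$, Lemmas \ref{basicestimate1}, \ref{bbs/bbbounded} and \ref{ccsccbounded} yield a positive lower bound on $\bb$ and uniform bounds on $\bb_{s}/\bb$ and $\cc_{s}/\cc$, so $\beta$ and the $A_{ij}$ are uniformly bounded on every annular region $\{s \geq s_{0}\}\times[0,t_{0}]$; at spatial infinity Lemma \ref{derivativesexponentiallybounded} combined with $\cc \leq \bb$ yields $|u|,|v| \leq M\exp(\alpha s)$. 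A standard truncation on annuli $\{s_{0} \leq s \leq S\}$, using the strict positivity of $(u,v)$ at the inner boundary and the exponential upper bound at the outer one, followed by letting $S \to \infty$ and $s_{0} \to 0$, delivers the weak inequalities $u(\cdot,t), v(\cdot,t) \geq 0$ on all of $\R^{4}$ for every $t \in [0,T)$.

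To upgrade to \emph{strict} positivity I would invoke the strong maximum principle. Suppose $u(p_{0},t_{0}) = 0$ for some $t_{0} > 0$; since $u(\origin,\cdot) \equiv 1$ forces $p_{0} \neq \origin$, and since the cross-term $A_{12}\,v \geq 0$ acts as a nonnegative forcing in the scalar $u$-equation, the classical strong maximum principle, applied in a small parabolic neighbourhood of $(p_{0},t_{0})$ and propagated by connectedness of $\R^{4}\setminus\{\origin\}$ together with continuity of $u$ across $\origin$, would give $u \equiv 0$ on $\R^{4}\times\{t_{1}\}$ for some $t_{1} < t_{0}$. This contradicts $u(\origin,t_{1}) = 1$. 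The case of $v$ is handled analogously using $A_{21}\,u \geq 0$ and $v(\origin,\cdot) \equiv 3$.

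The main obstacle is reconciling the noncompactness of $\R^{4}$ with the singular behaviour of the coefficients $A_{ij}$ at the origin, where factors of $\bb^{-2}$ blow up, and with the potentially unbounded growth at infinity. This is exactly what the preparatory estimates of Section 3.1 are tailored to address: they provide uniform control of all coefficients on every annular region and an exponential growth bound for $(u,v)$ at spatial infinity, which together make the truncation-and-limit argument for the cooperative system rigorous.
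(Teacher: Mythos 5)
Your proposal is correct and follows essentially the same route as the paper: the identical algebraic reduction via $\cc_{s} = \cc H - 2(\cc/\bb)\bb_{s}$ to a weakly coupled cooperative parabolic system with nonnegative off-diagonal coefficients $4\cc^{2}/\bb^{4}$ and $16(1-\cc^{2}/\bb^{2})/\bb^{2}$, the same preparatory estimates (Lemma \ref{basicestimate1}, Corollary \ref{bbs/bbbounded}, Lemmas \ref{derivativesexponentiallybounded} and \ref{ccsccbounded}) to control the coefficients away from $\origin$ and the exponential growth at infinity, the same appeal to Protter--Weinberger's maximum principle for systems (where the paper makes your ``truncation-and-limit'' concrete by adding $\epsilon W$ with $W = \exp\bigl(s^{2}/(1-\beta t) + \lambda t\bigr)$), and the strong maximum principle to upgrade to strict positivity (the paper propagates the vanishing by real analyticity \cite{bando} rather than by connectedness of $\R^{4}\setminus\{\origin\}$, but both work). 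One small slip worth flagging: Shi's estimate \cite{shi} bounds the curvature on $\R^{4}\times[0,t_{0}]$ but does \emph{not} give a uniform upper bound on $\bb$ and $\cc$ (the lemma is stated for arbitrary complete bounded-curvature warped Berger metrics, for which $\bb$ may well diverge); this is harmless since your argument actually relies only on the cited estimates, not on any a priori upper bound for $\bb$.
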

\begin{proof}
Suppose that there exist $x_{0}$ and $t_{0} > 0$ such that $(\cc/\bb)\bb_{s}(x_{0},t_{0}) = - z < 0$, for some $z > 0$. By the boundary conditions $x_{0} > 0$ and there exists $\delta = \delta (t_{0}) > 0$ such that 
\[
\inf_{B(\origin,\delta)\times [0,t_{0}]}\frac{\cc}{\bb}\bb_{s}(x,t)\geq \frac{1}{2}, \,\,\,\,\,\,\,\,\,\, \inf_{B(\origin,\delta)\times [0,t_{0}]}(\cc H)(x,t)\geq \frac{1}{2}.
\]
\noindent Using the commutator formula \eqref{commutatorformula} we may rewrite the evolution equations \eqref{evolutioncbbbs} and \eqref{evolutioncH} in the space-time region $(\R^{4}\setminus B(\origin,\delta))\times [0,t_{0}]$ as 
\begin{align*}
\left(\frac{\cc}{\bb}\bb_{s}\right)_{t} &= \frac{1}{\xi^{2}}\left(\frac{\cc}{\bb}\bb_{s}\right)_{xx} + \frac{1}{\xi}\left(2\frac{\bb_{s}}{\bb} - \frac{\cc_{s}}{\cc} - \frac{\xi_{x}}{\xi^{2}}\right)\left(\frac{\cc}{\bb}\bb_{s}\right)_{x} \\ &+ \frac{1}{\bb^{2}}\left(\frac{\cc}{\bb}\bb_{s}\right)\left(8 - 18\frac{\cc^{2}}{\bb^{2}} - 2\bb_{s}^{2}\right)  + 4\frac{\cc^{2}}{\bb^{4}}(\cc H) 
\end{align*}
\noindent and 
\begin{align*}
(\cc H)_{t} &= \frac{1}{\xi^{2}}\left(\cc H\right)_{xx} + \frac{1}{\xi}\left(2\frac{\bb_{s}}{\bb} - \frac{\cc_{s}}{\cc} - \frac{\xi_{x}}{\xi^{2}}\right)\left(\cc H\right)_{x} \\ &+ \frac{16}{\bb^{2}}\left(\frac{\cc}{\bb}\bb_{s}\right)\left(1 - \frac{\cc^{2}}{\bb^{2}}\right) + \frac{2}{\bb^{2}}(\cc H)\left(\frac{\cc^{2}}{\bb^{2}} - \bb_{s}^{2}\right).
\end{align*}
\noindent From Lemma \ref{basicestimate1} and Corollary \ref{bbs/bbbounded} we derive that the zero order coefficients are uniformly bounded in $(\R^{4}\setminus B(\origin,\delta))\times [0,t_{0}]$. Moreover, by Lemma \ref{consistencyRFassumption1} we know that the ordering $\cc\leq \bb$ is preserved along the flow, therefore the coupling coefficients $4\cc^{2}/\bb^{4}$ and $16/\bb^{2}(1 - \cc^{2}/\bb^{2})$ are both nonnegative. Similarly to \cite{appleton2} we can introduce a barrier function 
\[
W: (x,t)\rightarrow \exp\left(\frac{s^{2}(x,t)}{1 - \beta t} + \lambda t\right)
\]
\noindent for $t\leq \min\{t_{0},(2\beta)^{-1}\}$ and compute the evolution equations of $\cc\bb_{s}/\bb + \epsilon W$ and $\cc H + \epsilon W$ for any $\epsilon > 0$. Using Corollary \ref{bbs/bbbounded}, Lemma \ref{ccsccbounded} and standard distortion estimates of the distance function it is straightforward to check that there exist $\beta = \beta(t_{0})$ and $\lambda = \lambda(t_{0})$ such that 
\begin{align*}
\left(\frac{\cc}{\bb}\bb_{s} + \epsilon W\right)_{t} &> \frac{1}{\xi^{2}}\left(\frac{\cc}{\bb}\bb_{s} + \epsilon W\right)_{xx} + \frac{1}{\xi}\left(2\frac{\bb_{s}}{\bb} - \frac{\cc_{s}}{\cc} - \frac{\xi_{x}}{\xi^{2}}\right)\left(\frac{\cc}{\bb}\bb_{s} + \epsilon W\right)_{x} \\ &+ \frac{1}{\bb^{2}}\left(\frac{\cc}{\bb}\bb_{s} + \epsilon W\right)\left(8 - 18\frac{\cc^{2}}{\bb^{2}} - 2\bb_{s}^{2}\right)  + 4\frac{\cc^{2}}{\bb^{4}}(\cc H + \epsilon W)
\end{align*}
\noindent and similarly for the evolution equation of $\cc H + \epsilon W$. By assumption $\cc\bb_{s}/\bb + \epsilon W(\cdot,0) > 0$ and $\cc\bb_{s}/\bb + \epsilon W(\delta,t) > 0$ for any $t\in [0,\min\{t_{0},(2\beta)^{-1}\}]$. Since by Lemma \ref{derivativesexponentiallybounded} $\cc\bb_{s}/\bb$ is exponentially bounded in space uniformly in the time interval $[0,t_{0}]$ we also get that $(\cc\bb_{s}/\bb + \epsilon W)(x,t)\rightarrow \infty$ as $x\rightarrow \infty$ for any $t\in [0,\min\{t_{0},(2\beta)^{-1}\}]$. The same conclusions are satisfied by $\cc H + \epsilon W$. We can then apply \cite[Theorem 13, p.190]{protter} and conclude that $\cc\bb_{s}/\bb$ and $\cc H$ stay nonnegative along the flow. The strict inequality in the statement then follows from using the maximum principle \cite[Theorem 3, p.38]{friedman} and the real analyticity of the Ricci flow solutions \cite{bando} once we know that $\bb_{s}$ and $H$ are nonnegative.
\end{proof}
\section{Analysis of the Ricci flow}
In this section we derive the main curvature estimates for the Ricci flow solution $(\R^{4},g(t))$ evolving from a warped Berger metric metric $g_{0}$. In the first part we focus on the case $g_{0}\in\G$. Similarly to the analyses in \cite{IKS1} and \cite{IKS2} (which are performed on $S^{1}\times S^{3}$ and $S^{2}\tilde{\times}S^{2}$ respectively) we prove that away from the origin the Ricci flow is controlled by the size of the principal orbits. In particular, we show that the formation of a singularity at some positive $x$ (i.e. along the Euclidean hypersphere of radius $x$) is equivalent to $\bb(x,t)$ converging to zero as $t \rightarrow T$. We also describe the behaviour of the flow as the time approaches $T$. Analogously to \cite{IKS1}, we prove that around any singularity the solution becomes rotationally symmetric at some rate that breaks scale-invariance. 
\\In the second part we extend the previous estimates to Ricci flows starting at some $g_{0}\in\Gin$. Moreover, for this class of solutions we also prove that (a scale-invariant version of) the mean curvature of minimal hyperspheres admits a uniform positive lower bound in the compact region where singularities may form. 
\\For notational reasons we always let $\alpha$ denote a positive constant only depending on $g_{0}$ that may change from line to line.
\subsection{Curvature estimates in $\G$.}
Throughout this section we let $(\R^{4},g(t))_{0\leq t < T}$ be the maximal complete, bounded curvature Ricci flow solution evolving from some $g_{0}\in\G$. Since $\bb(\cdot,0)$ is bounded from above and $\cc\bb^{2}(\cdot,0)$ is increasing, because we have $(\cc\bb^{2})_{s} = \cc\bb^{2} H$, we deduce that there exists $\varepsilon > 0$ such that $\cc/\bb(\cdot,0) \geq \varepsilon$. By Lemma \ref{consistencyRFassumption1} we obtain 
\begin{equation}\label{uniformlowerboundforcb}
\varepsilon \leq \frac{\cc}{\bb}(\cdot,t) \leq 1,
\end{equation}
\noindent uniformly in the space-time $\R^{4}\times [0,T)$. We observe that \eqref{uniformlowerboundforcb} is not available for the topologies analysed in \cite{IKS2} and \cite{appleton2}.
\\ Next, we show that the maximal time of existence $T$ is finite.
\begin{lemma}\label{finite-timesingularity}
Let $(\R^{4},g(t))_{0\leq t < T}$ be the Ricci flow solution starting at $g_{0}\in\G$. Then 
\[
\sup_{p\in \R^{4}}\bb(p,t) \leq \sup_{p\in\R^{4}}\bb(p,0)
\]
\noindent for any $t\in [0,T)$. Moreover, we have $T \leq \frac{\sup\bb^{2}(\cdot,0)}{4}$.
\end{lemma}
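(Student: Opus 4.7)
The plan is to derive a differential inequality for $\bb^{2}$ and then apply the maximum principle. A direct computation using \eqref{Ricciflowpdes1}, \eqref{Ricciflowpdes2} and the formula for the Laplacian in \eqref{formulalaplacian} yields
\begin{equation*}
(\bb^{2})_{t} - \Delta \bb^{2} = -4 \bb_{s}^{2} + \frac{4 (\cc^{2} - 2 \bb^{2})}{\bb^{2}} \leq -4,
\end{equation*}
where the final step uses the ordering $\cc \leq \bb$ provided by Lemma \ref{consistencyRFassumption1}. Thus $\bb^{2}$ is a subsolution of the heat equation with a strictly negative source.

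The next step is to apply the maximum principle to $\bb^{2}$ on the noncompact manifold $(\R^{4}, g(t))$ in order to deduce
\begin{equation*}
\sup_{\R^{4}} \bb^{2}(\cdot, t) \leq \sup_{\R^{4}} \bb^{2}(\cdot, 0) - 4 t, \qquad t \in [0, T).
\end{equation*}
Since this supremum is not necessarily attained, I would mimic the barrier argument already used in the proof of Lemma \ref{cruciallemma}. Concretely, on any subinterval $[0, t_{0}] \subset [0, T)$ the curvature is uniformly bounded by \cite{shi}, so one can construct a space-time barrier $W(x,t) = \exp(s^{2}(x,t)/(1 - \beta t) + \lambda t)$, with $\beta$ small and $\lambda$ large, satisfying $W_{t} > \Delta W$ on $\R^{4} \times [0, \min\{t_{0}, (2\beta)^{-1}\}]$. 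Applying the compact maximum principle to $\bb^{2} + 4t - \sup_{\R^{4}} \bb^{2}(\cdot,0) - \epsilon W$ on this region and then letting $\epsilon \to 0$ gives the claim. Taking square roots in particular recovers the first assertion of the lemma.

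For the bound on $T$ I would argue that for every $t \in [0, T)$ the metric $g(t)$ is smooth and non-degenerate on $\R^{4} \setminus \{\origin\}$, so $\sup_{\R^{4}} \bb^{2}(\cdot, t) > 0$. Combined with the displayed inequality this forces $4 t < \sup_{\R^{4}} \bb^{2}(\cdot, 0)$ whenever $t < T$, so $T \leq \sup \bb^{2}(\cdot,0) / 4$. The main technical difficulty lies in the rigorous application of the maximum principle at spatial infinity, but the barrier construction together with the uniform derivative estimates from Section 3 (specifically Corollary \ref{bbs/bbbounded}, Lemma \ref{ccsccbounded} and Lemma \ref{derivativesexponentiallybounded}) make this setup essentially identical to the one carried out in the proof of Lemma \ref{cruciallemma}, and no new ideas are required.
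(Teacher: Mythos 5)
Your computation of the evolution inequality $(\bb^{2})_{t}-\Delta\bb^{2}=-4\bb_{s}^{2}+4\cc^{2}/\bb^{2}-8\leq-4$ matches the paper exactly, and your conclusion via a noncompact maximum principle is precisely what the paper does — the only difference is that the paper simply invokes \cite[Theorem 12.14]{ricciflowtechniques2} while you unpack the barrier construction needed to make the maximum principle rigorous at spatial infinity. This is the same approach at a finer level of detail, and the derivation of the upper bound on $T$ from $\sup\bb^{2}(\cdot,t)\leq\sup\bb^{2}(\cdot,0)-4t$ is also what the paper intends.
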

\begin{proof}
From the boundary conditions we deduce that $\bb^{2}(\cdot,t)$ is a smooth function on $\R^{4}$ as long as the solution exists. By \eqref{Ricciflowpdes1} and \eqref{formulalaplacian} we get 
\[
\partial_{t}\bb^{2} = \Delta \bb^{2} -4\bb_{s}^{2} + 4\frac{\cc^{2}}{\bb^{2}} - 8 \leq \Delta \bb^{2}  - 4.
\]
\noindent The conclusions then follow from the maximum principle \cite[Theorem 12.14]{ricciflowtechniques2}.
\end{proof}
\begin{remark}
From Lemma \ref{cruciallemma} and Lemma \ref{finite-timesingularity} we derive that the set $\G$ is preserved along the Ricci flow.
\end{remark}
Next, we prove that $\bb_{s}$ and $\cc_{s}$ are uniformly bounded in the space-time. The evolution equations of the first order spatial derivative are given by 
\begin{equation}\label{equationbs}
(\bb_{s})_{t} = \Delta (\bb_{s}) - 2\frac{\bb_{s}}{\bb}(\bb_{s})_{s} + \left (\frac{4}{\bb^{2}} - \frac{\bb_{s}^{2}}{\bb^{2}} - \frac{\cc_{s}^{2}}{\cc^{2}} -6\frac{\cc^{2}}{\bb^{4}} \right )\bb_{s} + 4\frac{\cc}{\bb^{3}}\cc_{s}
\end{equation}
\noindent and
\begin{equation}\label{equationcs}
(\cc_{s})_{t} = \Delta (\cc_{s}) - 2\frac{\cc_{s}}{\cc}(\cc_{s})_{s} - \left ( 6\frac{\cc^{2}}{\bb^{4}} + 2\frac{\bb_{s}^{2}}{\bb^{2}} \right )\cc_{s} + 8\frac{\cc^{3}}{\bb^{5}}\bb_{s}.
\end{equation}
\begin{lemma}\label{firstderivativesbounded}
There exists $\alpha > 0$ such that $\lvert \bb_{s} \rvert \leq \alpha$ and $\lvert \cc_{s} \rvert \leq \alpha$ in $\R^{4}\times [0,T)$.
\end{lemma}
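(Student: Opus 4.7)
The plan is to bound the combined quantity $F\doteq \bb_{s}^{2}+\cc_{s}^{2}$ uniformly on $\R^{4}\times [0,T)$ via the parabolic maximum principle, employing a barrier at spatial infinity analogous to that of Lemma \ref{cruciallemma}. The relevant inputs are: by \eqref{uniformlowerboundforcb} and Lemma \ref{consistencyRFassumption1}, $\varepsilon\leq \cc/\bb\leq 1$ on $\R^{4}\times [0,T)$, where $\varepsilon > 0$ because $\bb(\cdot,0)$ is bounded and $(\cc\bb^{2})_{s}=\cc\bb^{2}H\geq 0$ force $\inf_{x\geq 0}\cc/\bb(x,0)>0$; by Lemma \ref{finite-timesingularity}, $\bb$ stays bounded above; and by Lemma \ref{derivativesexponentiallybounded}, $\bb_{s},\cc_{s}$ grow at most exponentially in $s$ at every fixed time.

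Using the evolution equations \eqref{equationbs} and \eqref{equationcs}, I would compute $F_{t}-\Delta F$. After discarding the manifestly non-positive gradient terms $-2(\partial_{s}\bb_{s})^{2}-2(\partial_{s}\cc_{s})^{2}$, the zero-order contribution reads schematically
\[
\tfrac{1}{\bb^{2}}\bigl[8\bb_{s}^{2}-2\bb_{s}^{4}-12(\cc/\bb)^{2}(\bb_{s}^{2}+\cc_{s}^{2})+8(\cc/\bb)\bigl(1+2(\cc/\bb)^{2}\bigr)\bb_{s}\cc_{s}+(\text{manifestly negative})\bigr].
\]
The positive cross-term is the main source of difficulty. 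I would absorb it via Young's inequality $|\bb_{s}\cc_{s}|\leq \tfrac{1}{2}(\eta^{-1}\bb_{s}^{2}+\eta\cc_{s}^{2})$ with $\eta$ comparable to $\varepsilon^{2}$: the $\cc_{s}^{2}$ contribution is then dominated by the good dissipation $-12\varepsilon^{2}\cc_{s}^{2}/\bb^{2}$, and the $\bb_{s}^{2}$ contribution is dominated by the quartic dissipation $-2\bb_{s}^{4}/\bb^{2}$ once $\bb_{s}^{2}$ exceeds a threshold depending only on $\varepsilon$. This yields a pointwise differential inequality of the form $F_{t}\leq \Delta F + \text{drift} + \bb^{-2}(C_{1}F - C_{2}F^{2})$ valid at points where $F$ is sufficiently large.

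To run the maximum principle on $\R^{4}$, I would adapt the barrier argument of Lemma \ref{cruciallemma}: fix $t_{0}<T$ and $\delta>0$, introduce $W(x,t)\doteq \exp\bigl(s^{2}/(1-\beta t)+\lambda t\bigr)$ with $\beta,\lambda$ chosen depending on $t_{0}$, and study $F+\epsilon W$ on $(\R^{4}\setminus B(\origin,\delta))\times [0,\min\{t_{0},(2\beta)^{-1}\}]$. Because the boundary conditions \eqref{smoothnessorigin} and smoothness of $g(t)$ keep $F$ bounded on $B(\origin,\delta)\times [0,t_{0}]$, and because Lemma \ref{derivativesexponentiallybounded} forces $F+\epsilon W\to \infty$ at spatial infinity, the spatial supremum of $F+\epsilon W$ is achieved on the lateral/initial boundary. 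The pointwise ODE comparison $\dot{F}\leq \bb^{-2}(C_{1}F-C_{2}F^{2})$ then bounds $F$ on each slab; iterating in finitely many steps covers $[0,T)$.

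The main obstacle is the genuine coupling of the equations for $\bb_{s}$ and $\cc_{s}$ through the source terms $4(\cc/\bb^{3})\cc_{s}$ and $8(\cc^{3}/\bb^{5})\bb_{s}$, which prevents bounding either derivative in isolation. Equally delicate is that the dissipative coefficients in $F_{t}-\Delta F$ carry factors of $1/\bb^{2}$ and become unbounded where $\bb$ is small. The uniform lower bound $\cc/\bb\geq\varepsilon>0$, specific to the class $\G$, is essential: it guarantees that the quadratic dissipation in $\cc_{s}$ is non-degenerate, so that Young's inequality closes. The argument has to be modified in the following subsection for the class $\Gin$, where the Hopf fibres may collapse relative to the base and this lower bound is not available.
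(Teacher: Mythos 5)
Your plan to bound $F=\bb_{s}^{2}+\cc_{s}^{2}$ in one stroke differs genuinely from the paper, which bounds $\bb_{s}$ first and then $\cc_{s}$ separately. Unfortunately the combined quantity does not obey a clean parabolic inequality: you have only accounted for the gradient pieces $-2(\partial_{s}\bb_{s})^{2}-2(\partial_{s}\cc_{s})^{2}$, but in
\[
F_{t}-\Delta F=-2(\partial_{s}\bb_{s})^{2}-2(\partial_{s}\cc_{s})^{2}-4\tfrac{\bb_{s}^{2}}{\bb}\,\partial_{s}\bb_{s}-4\tfrac{\cc_{s}^{2}}{\cc}\,\partial_{s}\cc_{s}+(\text{zero order})
\]
the middle two terms are \emph{not} drift terms for $F$ (they are not multiples of $\partial_{s}F$) and do not vanish at an interior critical point of $F+\epsilon W$. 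Completing the square against the good gradient terms gives
\[
-2(\partial_{s}\bb_{s})^{2}-4\tfrac{\bb_{s}^{2}}{\bb}\partial_{s}\bb_{s}\leq 2\tfrac{\bb_{s}^{4}}{\bb^{2}},\qquad -2(\partial_{s}\cc_{s})^{2}-4\tfrac{\cc_{s}^{2}}{\cc}\partial_{s}\cc_{s}\leq 2\tfrac{\cc_{s}^{4}}{\cc^{2}}.
\]
The first remainder exactly cancels the quartic dissipation $-2\bb_{s}^{4}/\bb^{2}$ in your zero-order expression, while the second leaves a source $2\cc_{s}^{4}/\cc^{2}\leq 2\varepsilon^{-2}\cc_{s}^{4}/\bb^{2}$ with no corresponding dissipation: the $\cc_{s}$-equation carries no $-\cc_{s}^{3}$ term, and the only quartic containing $\cc_{s}$ in the zero order, $-2\bb^{2}\bb_{s}^{2}\cc_{s}^{2}/\cc^{2}$, degenerates when $\bb_{s}$ is small. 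Thus the claimed inequality $F_{t}\leq\Delta F+\text{drift}+\bb^{-2}(C_{1}F-C_{2}F^{2})$ fails precisely in the regime you would need it ($\cc_{s}$ large, $\bb_{s}$ small), and the barrier argument cannot be closed.

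The paper circumvents this entirely by a sequential scalar maximum principle: at a first interior maximum of $\bb_{s}$ alone, the drift term $-2(\bb_{s}/\bb)\partial_{s}\bb_{s}$ vanishes identically, and the remaining quadratic in $\cc_{s}$ (with leading coefficient $-\bar\alpha$, from the dissipation $-\bb_{s}\cc_{s}^{2}/\cc^{2}$) has negative discriminant for $\bar\alpha>2$, so $\bb_{s}$ is bounded without ever using $\cc/\bb\geq\varepsilon$. Only the subsequent bound on $\cc_{s}$ uses $\varepsilon$ together with the already-established uniform bound on $\bb_{s}$, and again the relevant drift $-2(\cc_{s}/\cc)\partial_{s}\cc_{s}$ vanishes at the critical point. (The paper also uses decay of $\bb_{s},\cc_{s}$ at spatial infinity, established via integrability and Shi estimates, rather than a barrier; that part of your structure could be made to work, but the decoupling difficulty above is the essential gap.)
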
 
\begin{proof}
From Lemma \ref{cruciallemma} and Lemma \ref{finite-timesingularity} we derive that $\bb_{s}(\cdot,t)$ is integrable for any $t\in [0,T)$. Moreover, by \cite{shi} we see that for any $t\in [0,T)$ there exists $\alpha(t) < \infty$ such that $\lvert \bb_{ss}/\bb\rvert \leq \alpha(t)$. Since by Lemma \ref{finite-timesingularity} $\bb$ is uniformly bounded from above we deduce that $\bb_{s}(x,t)\rightarrow 0$ as $x\rightarrow \infty$ for any $t\in [0,T)$. Therefore, if $\bb_{s}$ becomes unbounded (from above) as $t\nearrow T$, then there exists a critical point $p_{0}$ for $\bb_{s}(\cdot, t_{0})$ where $\bb_{s} = \bar{\alpha}$ for the first time, for some $\bar{\alpha}$ large to be chosen below and for some $t_{0} > 0$. 
Evaluating \eqref{equationbs} at $(p_{0},t_{0})$ we get
\[
(\bb_{s})_{t}(p_{0},t_{0}) \leq \frac{1}{\bb^{2}}\left (4\bar{\alpha} - \bar{\alpha}^{3} - \bar{\alpha} \cc_{s}^{2} - 6\bar{\alpha}\left(\frac{\cc^{2}}{\bb^{2}}\right) + 4 \frac{\cc}{\bb}\cc_{s}\right ). 
\]
\noindent By choosing $\bar{\alpha}> \max\{\sup\lvert \bb_{s}\rvert(\cdot, 0), 2\}$ one easily checks that the $\cc_{s}-$ quadratic polynomial in the brackets does not admit roots, thus proving $(\bb_{s})_{t}(p_{0},t_{0}) < 0$. The exact same argument works for the lower bound of $\bb_{s}$. In fact, the lower bound for $\bb_{s}$ also follows from Lemma \ref{cruciallemma}.
\\We now adapt the argument for $\cc_{s}$. Since $\bb^{2}\cc H = (\bb^{2}\cc)_{s}$ and $\bb^{2}\cc$ is bounded from above, we see that $\bb^{2}\cc H(\cdot,t)$ is integrable. By differentiating we find
\[
(\bb^{2}\cc)_{ss} = 2\bb\cc\bb_{ss} + 2\bb_{s}^{2}\cc + 4\bb\bb_{s}\cc_{s} + \bb^{2}\cc_{ss}.
\]
\noindent From \eqref{sectionalvertical13} we derive $\lvert \bb_{s}\cc_{s}\rvert(\cdot,t) \leq \alpha(t)\bb\cc(\cdot,t) + \cc^{3}/\bb^{3}(\cdot,t) \leq \alpha(t)$ being the curvature bounded at any time slice $\R^{4}\times \{t\}$ for $t\in [0,T)$. Similarly $\lvert \cc_{ss}\rvert(\cdot,t)\leq \alpha(t)$. Therefore, since $\bb_{s}$ is uniformly bounded in the space-time we conclude that $\lvert(\bb^{2}\cc)_{ss}\rvert (\cdot,t)\leq \alpha(t)$, which implies $\bb^{2}\cc H(x,t)\rightarrow 0$ as $x\rightarrow \infty$ for any $t\in[0,T)$. In particular $\cc_{s}(x,t)\rightarrow 0$ as $x\rightarrow \infty$ because $\bb$ (and hence $\cc$) is uniformly bounded from above and $\bb_{s}(x,t)\rightarrow 0$. One can then argue as above that if $\cc_{s}$ becomes unbounded as $t\nearrow T$, then there exists a first maximum $p_{0}$ where $\cc_{s}$ attains a sufficiently large value $\bar{\alpha}$ at some $t_{0} > 0$ for the first time. It follows that
\[
(\cc_{s})_{t}(p_{0},t_{0}) \leq \frac{1}{\bb^{2}}\left(\bar{\alpha}\left(-6\frac{\cc^{2}}{\bb^{2}} - 2\bb_{s}^{2}\right) + 8\frac{\cc^{3}}{\bb^{3}}\bb_{s}\right).
\]
\noindent By Lemma \ref{consistencyRFassumption1} we know that the ordering $\cc\leq\bb$ is preserved. Therefore for $\bar{\alpha}$ large enough the right hand side is strictly negative. The same conclusion holds for the lower bound.
\end{proof}
From the previous Lemma and the condition $\cc \leq \bb$ we immediately derive the following bounds for the vertical sectional curvatures. From now on any estimate is satisfied in the space-time $\R^{4}\times [0,T)$ unless otherwise stated.
\begin{corollaryy}\label{verticalsectionalcontrolled}
There exists $\alpha> 0$ such that 
\[
\lvert k_{12} \rvert + \lvert k_{13}\rvert \leq \frac{\alpha}{\bb^{2}}.
\]
\end{corollaryy}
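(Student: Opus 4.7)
The plan is to read off the bound directly from the explicit formulas \eqref{sectionalvertical12}--\eqref{sectionalvertical13} for $k_{12}$ and $k_{13}$, combining the uniform pointwise control on $\bb_s$ and $\cc_s$ from Lemma \ref{firstderivativesbounded} with the two-sided ratio estimate \eqref{uniformlowerboundforcb} that is available specifically for initial data in $\G$.

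For $k_{12}$, I would estimate the two summands separately. Using $\cc \leq \bb$ gives
\[
\left\lvert \frac{4\bb^2 - 3\cc^2}{\bb^4} \right\rvert \leq \frac{4}{\bb^2},
\]
and Lemma \ref{firstderivativesbounded} supplies a constant $\alpha$ with $\bb_s^2 \leq \alpha$, so $\bb_s^2/\bb^2 \leq \alpha/\bb^2$. Adding the two contributions yields an estimate of the required form.

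For $k_{13}$, the term $\cc^2/\bb^4$ is bounded by $1/\bb^2$ since $\cc \leq \bb$. The mixed term $\bb_s \cc_s/(\bb \cc)$ is where the ratio bound is essential: by \eqref{uniformlowerboundforcb} there is $\varepsilon>0$ with $\cc \geq \varepsilon \bb$ throughout the space-time, hence $\bb\cc \geq \varepsilon\bb^2$, and then Lemma \ref{firstderivativesbounded} gives $\lvert \bb_s \cc_s\rvert \leq \alpha$, so
\[
\left\lvert \frac{\bb_s \cc_s}{\bb\cc}\right\rvert \leq \frac{\alpha}{\varepsilon \bb^2}.
\]
Summing the two pieces for $k_{13}$ and combining with the $k_{12}$ estimate produces a single constant $\alpha$ (depending only on $g_0$) giving $\lvert k_{12}\rvert + \lvert k_{13}\rvert \leq \alpha/\bb^2$.

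There is no real obstacle here; the statement is essentially a bookkeeping consequence of the preceding lemmas. The only non-obvious ingredient is that $\cc$ stays comparable to $\bb$ from below, which for $g_0 \in \G$ follows from $H \geq 0$ at $t=0$ (so $\cc\bb^2$ is increasing in $s$, giving a positive infimum of $\cc/\bb$ at $t=0$) together with Lemma \ref{consistencyRFassumption1}. Once this is invoked, the rest is immediate from the curvature formulas.
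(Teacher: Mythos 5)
Your proof is correct and follows the same route as the paper: plug the explicit curvature formulas into the uniform derivative bounds from Lemma~\ref{firstderivativesbounded} and the ratio control $\varepsilon \leq \cc/\bb \leq 1$ from \eqref{uniformlowerboundforcb}. You are in fact slightly more careful than the paper's one-line justification, which only cites ``$\cc\leq\bb$''; you correctly identify that for the cross term $\bb_s\cc_s/(\bb\cc)$ in $k_{13}$ it is the \emph{lower} bound $\cc\geq\varepsilon\bb$ (established just above the corollary and preserved by Lemma~\ref{consistencyRFassumption1}) that is actually needed.
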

The following estimate is a necessary step to prove that the solution to the Ricci flow becomes spherically symmetric at any singularity forming away from the origin.
\begin{lemma}\label{bssquareminus4}
The following holds as long as the solution exists:
\[
\sup_{\R^{4}}\left(\frac{1}{\bb}\left(\bb_{s}^{2}-4\right)\right)_{+}(\cdot,t) \leq \sup_{\R^{4}}\left(\frac{1}{\bb}\left(\bb_{s}^{2}-4\right)\right)_{+}(\cdot,0).
\]
\end{lemma}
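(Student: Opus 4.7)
The plan is to establish an evolution inequality of the form $F_t - \Delta F < 0$ wherever $F := \bb^{-1}(\bb_s^2-4) > 0$ and then apply the maximum principle on $\R^4$.

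First I would compute the pointwise evolution of $F$. From \eqref{Ricciflowpdes1} and \eqref{formulalaplacian} one obtains $\bb_t - \Delta\bb = -\bb_s^2/\bb + 2\cc^2/\bb^3 - 4/\bb$. Setting $V = \bb_s^2 - 4$ and using \eqref{equationbs} together with the product-type identity $\Delta(V/\bb) = \bb^{-1}\Delta V - V\bb^{-2}\Delta\bb + 2V\bb_s^2/\bb^3 - 2V_s\bb_s/\bb^2$, a direct computation (in which the coefficients of $(\bb_s)_s$ cancel) yields
\begin{align*}
F_t - \Delta F = -\frac{2(\bb_s)_s^2}{\bb} &+ \frac{-(\bb_s^2-4)(3\bb_s^2-4)}{\bb^3} + \frac{(-14\bb_s^2+8)\,\cc^2}{\bb^5} \\ &-\frac{2\bb_s^2\cc_s^2}{\bb\cc^2} + \frac{8\bb_s\cc_s\cc}{\bb^4}.
\end{align*}

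Next, I would control the coupled $\cc_s$-terms jointly via Young's inequality with appropriate weight: $|8\bb_s\cc_s\cc/\bb^4|\leq 2\bb_s^2\cc_s^2/(\bb\cc^2)+8\cc^4/\bb^7$, so their sum is bounded above by $8\cc^4/\bb^7$. Invoking the ordering $\cc\leq\bb$, preserved along the flow by Lemma \ref{consistencyRFassumption1}, one has $8\cc^4/\bb^7 \leq 8\cc^2/\bb^5$, and combining with the preceding $\cc^2$-term gives
\[
\frac{(-14\bb_s^2+8)\cc^2}{\bb^5} + \frac{8\cc^4}{\bb^7} \leq \frac{(-14\bb_s^2+16)\cc^2}{\bb^5},
\]
which is strictly negative whenever $\bb_s^2\geq 4$. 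Since the polynomial factor $-(\bb_s^2-4)(3\bb_s^2-4)$ is also strictly negative for $\bb_s^2>4$, one concludes $F_t-\Delta F < 0$ at every space-time point where $F>0$.

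Finally, I would close by a maximum principle on $\R^4$. The boundary condition \eqref{smoothnessorigin} forces $F\to -\infty$ as $s\to 0^+$, so $F_+$ vanishes in a neighborhood of the singular orbit. By Lemma \ref{basicestimate1} $\bb(\cdot,t)$ is bounded below away from zero outside any fixed Euclidean ball, while the argument used in the proof of Lemma \ref{firstderivativesbounded} gives $\bb_s(x,t)\to 0$ as $x\to\infty$ for each $t\in[0,T)$, so $F$ is strictly negative at spatial infinity. Hence any positive supremum of $F(\cdot,t)$ is attained at an interior point, where $F_s=0$ and $\Delta F \leq 0$; the pointwise inequality above then gives $F_t<0$ there, forcing $t\mapsto\sup_{\R^4} F_+(\cdot,t)$ to be non-increasing. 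A spatial barrier argument identical in spirit to the one in the proof of Lemma \ref{cruciallemma} makes this rigorous on the non-compact manifold. The main obstacle is the second step: choosing the right weight in Young's inequality so that the cross term $8\bb_s\cc_s\cc/\bb^4$ is absorbed by the nonpositive $\cc_s^2$-term while the residual $\cc^4/\bb^7$ can still be dominated by the existing $\cc^2/\bb^5$-contribution via $\cc\leq\bb$.
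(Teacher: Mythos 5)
Your proof is correct, and your evolution equation for $F = \bb^{-1}(\bb_s^2-4)$ agrees with the paper's (after one expands the paper's $\varphi_t$-formula and collects the $\cc^2$- and $\bb_s$-polynomial terms). Where you diverge from the paper is in the pointwise sign analysis. The paper works at a spatial critical point of $\varphi$, uses the stationarity relation $\varphi_s=0$ (which, when $\bb_s\neq 0$, yields $\bb_{ss}=\varphi/2$) to replace $-2\bb_{ss}^2/\bb$ by $-\varphi^2/(2\bb)$, thereby sharpening the $\bb_s$-polynomial to $-\tfrac{7}{2}\bb_s^4+20\bb_s^2-24$, and then shows the resulting quadratic in $\cc_s$ has negative discriminant using $\cc\le\bb$. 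You instead work with the bare differential inequality on the whole region $\{F>0\}$: you discard the nonpositive term $-2\bb_{ss}^2/\bb$, absorb the cross-term $8\bb_s\cc_s\cc/\bb^4$ into the nonpositive $-2\bb_s^2\cc_s^2/(\bb\cc^2)$ by Young/AM-GM (with the correct weight, leaving a residual $8\cc^4/\bb^7$), and then bound the residual by $8\cc^2/\bb^5$ via $\cc\le\bb$ so that the combined coefficient $(-14\bb_s^2+16)$ is strictly negative once $\bb_s^2\ge 4$. Your route is mildly more elementary (no need for the critical-point substitution, and the final inequality $F_t-\Delta F<0$ holds pointwise on $\{F>0\}$, not only at maxima); the paper's route is slightly sharper because it retains the $\bb_{ss}^2$ information, but either sign argument is sufficient. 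Your treatment of the parabolic boundary (blow-down to $-\infty$ at the origin by \eqref{smoothnessorigin}, negativity at spatial infinity via $\bb_s\to 0$ and Lemma \ref{basicestimate1}, and a barrier as in Lemma \ref{cruciallemma} to justify the interior maximum on the noncompact $\R^4$) is the same in spirit as the paper's.
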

\begin{proof}
Let us denote the quantity $(\bb_{s}^{2}-4)/\bb$ by $\varphi$. By the boundary conditions $\varphi$ is uniformly bounded from above as $x\rightarrow 0$. Moreover, as we have already argued in the proof of Lemma \ref{firstderivativesbounded}, we find that $\varphi(x,t)$ becomes negative for $x$ large enough. We may then let $(p_{0},t_{0})$ be the maximum point among prior times where $\varphi$ attains some positive value $\alpha$. A direct computation gives
\[
\varphi_{t} = \Delta \varphi - 2\frac{\bb_{ss}^{2}}{\bb} - \frac{\varphi^{2}}{\bb} - \varphi \left(2\frac{\bb_{s}^{2}}{\bb^{2}} + 2\frac{\cc^{2}}{\bb^{4}} \right) - 12 \frac{\cc^{2}}{\bb^{5}}\bb_{s}^{2} + 2\cc_{s}\bb_{s}\left(4\frac{\cc}{\bb^{4}} -\frac{\cc_{s}\bb_{s}}{\bb\cc^{2}}  \right).
\]
\noindent Evaluating the evolution equation at $(p_{0},t_{0})$ we get
\[\varphi_{t}(p_{0},t_{0}) \leq \frac{1}{\bb^{3}}\left(-\frac{7}{2}\bb_{s}^{4} + \bb_{s}^{2}\left(20 - 14\frac{\cc^{2}}{\bb^{2}} \right) - 2\frac{\bb_{s}^{2}\cc_{s}^{2}\bb^{2}}{\cc^{2}} + 8\frac{\bb_{s}\cc_{s}\cc}{\bb} - 24 + 8\frac{\cc^{2}}{\bb^{2}}\right).
\]
\noindent We now regard the term in the brackets as a quadratic polynomial in $\cc_{s}$. Chosen $\alpha > 0$, we can find $\epsilon > 0$ such that $\bb_{s}^{2} = 4 + \epsilon$. The discriminant of the polynomial is given by
\[
8\frac{\bb_{s}^{2}\bb^{2}}{\cc^{2}}\left (- 8 \epsilon - \frac{7}{2}\epsilon^{2} - 14\epsilon \frac{\cc^{2}}{\bb^{2}} - 48\frac{\cc^{2}}{\bb^{2}} + 8 \frac{\cc^{4}}{\bb^{4}}  \right) < 0,
\]
\noindent where we have again used that the ordering $\cc \leq \bb$ is preserved by Lemma \ref{consistencyRFassumption1}. Therefore, the quantity $\sup_{\R^{4}}\varphi_{+}(\cdot,t)$ is non-increasing along the solution.
\end{proof} 
In the next Lemma we prove that if $\cc(x,t)$ converges to zero as $t\rightarrow T$ (along some sequence of times) for some $x > 0$, then the metric becomes rotationally symmetric at $x$.
\begin{lemma}\label{rotationalsymmetry}
There exists $\alpha > 0$ such that 
\[
\varphi\doteq \frac{1}{\bb}\left (\frac{\bb}{\cc} - 1 \right) \leq \alpha.
\]
\end{lemma}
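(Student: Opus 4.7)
\emph{Plan.} I would run a maximum-principle argument for $\varphi$ directly, mirroring the barrier-function strategy of Lemma \ref{cruciallemma} to accommodate the non-compactness of $\R^{4}$, with the crucial new ingredient being a favourable cancellation at interior critical points.

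First I would derive the evolution equation for $\varphi = 1/\cc - 1/\bb$ from \eqref{Ricciflowpdes1}, \eqref{Ricciflowpdes2} and \eqref{formulalaplacian}; a direct calculation yields
\[
\varphi_{t} = \Delta\varphi + \frac{\bb_{s}^{2}}{\bb^{3}} - \frac{\cc_{s}^{2}}{\cc^{3}} - \frac{2\cc(\cc + 2\bb)}{\bb^{4}}\,\varphi.
\]
By Lemma \ref{consistencyRFassumption1} the ordering $\cc \leq \bb$ is preserved, so $\varphi \geq 0$ and the zero-order coefficient is manifestly non-positive. The smoothness conditions \eqref{smoothnessorigin} and the resulting odd expansions of $\bb$ and $\cc$ near $\origin$ keep $\varphi(\origin,t) = 0$ and $\varphi$ smooth on $\R^{4}\times[0,T)$, so any would-be blow-up of $\varphi$ must occur away from the origin.

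The heart of the argument is a critical-point identity. At any interior spatial critical point, $\varphi_{s} = 0$ gives $\bb_{s}/\bb^{2} = \cc_{s}/\cc^{2} =: L$; substituting $\bb_{s} = L\bb^{2}$, $\cc_{s} = L\cc^{2}$ produces the identity
\[
\frac{\bb_{s}^{2}}{\bb^{3}} - \frac{\cc_{s}^{2}}{\cc^{3}} \,=\, L^{2}(\bb - \cc) \,=\, \frac{\bb_{s}^{2}\,\cc}{\bb^{3}}\,\varphi.
\]
Lemma \ref{bssquareminus4} gives $\bb_{s}^{2} \leq 4 + K\bb$ for a uniform constant $K = K(g_{0})$, and together with $\cc \leq \bb$ this bounds the reaction contribution by $(4\cc/\bb^{3} + K/\bb)\,\varphi$. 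The key observation is that the $4\cc/\bb^{3}$ piece cancels exactly against the corresponding part of the dissipative term $-(2\cc^{2}/\bb^{4} + 4\cc/\bb^{3})\,\varphi$, and using $\cc \geq \varepsilon\bb$ from \eqref{uniformlowerboundforcb} I obtain at a spatial maximum
\[
\varphi_{t}\big|_{\max} \,\leq\, \left(\frac{K}{\bb} - \frac{2\cc^{2}}{\bb^{4}}\right)\varphi \,\leq\, \frac{K\bb - 2\varepsilon^{2}}{\bb^{2}}\,\varphi,
\]
which is strictly negative for $\bb < 2\varepsilon^{2}/K$. On the complementary region $\bb \geq 2\varepsilon^{2}/K$ one has the trivial bound $\varphi \leq 1/\cc \leq 1/(\varepsilon\bb) \leq K/(2\varepsilon^{3})$.

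To finish I would run the maximum principle on $\R^{4}\times[0,t_{0}]$ for arbitrary $t_{0} < T$ using a barrier function $W(x,t) = \exp\!\bigl(s^{2}(x,t)/(1-\beta t) + \lambda t\bigr)$ exactly as in Lemma \ref{cruciallemma}: the exponential spatial bounds provided by Lemma \ref{derivativesexponentiallybounded} together with Lemma \ref{firstderivativesbounded} ensure that $\varphi - \epsilon W$ attains its maximum in the interior, and sending $\epsilon \to 0$ yields $\sup_{\R^{4}\times[0,t_{0}]}\varphi \leq \max\bigl\{\sup\varphi(\cdot,0),\, K/(2\varepsilon^{3})\bigr\}$ uniformly in $t_{0}$. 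The main obstacle is the critical-point identity itself: a priori the gradient-type term $\bb_{s}^{2}/\bb^{3} - \cc_{s}^{2}/\cc^{3}$ may be large and positive, and it is only by combining the critical-point constraint $\bb_{s}/\bb^{2} = \cc_{s}/\cc^{2}$ with the a priori bound of Lemma \ref{bssquareminus4} that one absorbs it into the reaction term and extracts a genuinely dissipative inequality precisely on the small-$\bb$ region where the pointwise bound $\varphi \leq 1/\cc$ is insufficient.
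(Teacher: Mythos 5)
Your central computation is correct and matches the paper's in essence: the evolution equation for $\varphi$, the interior-critical-point relation $\bb_{s}/\bb^{2}=\cc_{s}/\cc^{2}$, the reduction $\bb_{s}^{2}/\bb^{3}-\cc_{s}^{2}/\cc^{3}=(\bb_{s}^{2}\cc/\bb^{3})\varphi$, the input from Lemma~\ref{bssquareminus4}, and the cancellation of the $4\cc/\bb^{3}$ piece against the dissipative term, leaving a coefficient that is strictly negative once $\bb$ is small, combined with the trivial bound $\varphi\leq 1/(\varepsilon\bb)$ when $\bb$ is not small. This is precisely the paper's cancellation, only written in a slightly different normalization of the final inequality.

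The gap is in how the maximum principle is justified on the non-compact $\R^{4}$. You propose to run the barrier of Lemma~\ref{cruciallemma} and invoke the critical-point identity at a maximum of $\varphi-\epsilon W$. But at such a maximum one has $\varphi_{s}=\epsilon W_{s}$, \emph{not} $\varphi_{s}=0$, so the identity $\bb_{s}/\bb^{2}=\cc_{s}/\cc^{2}$ fails; expanding yields
\[
\frac{\bb_{s}^{2}}{\bb^{3}}-\frac{\cc_{s}^{2}}{\cc^{3}}=(\bb-\cc)\Bigl(\frac{\bb_{s}}{\bb^{2}}\Bigr)^{2}+2\epsilon\,\cc\frac{\bb_{s}}{\bb^{2}}W_{s}-\epsilon^{2}\cc W_{s}^{2},
\]
and the linear-in-$W_{s}$ error need not vanish as $\epsilon\to 0$: if the supremum of $\varphi$ is approached only at spatial infinity, the maximum point of $\varphi-\epsilon W$ escapes to infinity and $\epsilon W_{s}$ at that point stays of order $s_{\epsilon}\to\infty$. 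Equivalently, rewriting the gradient contribution as a linear-in-$\varphi_{s}$ term shows its coefficient is $2\cc\bb_{s}/\bb^{2}$, which is unbounded as $\bb\to 0$ — precisely the regime where the estimate is needed — so it is not of the bounded-coefficient type that the barrier of Lemma~\ref{cruciallemma} is constructed to dominate. (In Lemma~\ref{cruciallemma} the equations are linear in the spatial gradient of the evolving quantity and are considered on $(\R^{4}\setminus B(\origin,\delta))\times[0,t_{0}]$ with $t_{0}<T$ fixed, so all coefficients are uniformly bounded there; neither feature carries over here.) The paper avoids this entirely: it first proves Claim~\ref{secondderivativesdecay}, that $\bb_{ss},\cc_{ss}\to 0$ at spatial infinity for $t\geq T/2$, which together with $\bb_{s},\cc_{s}\to 0$ gives $\varphi_{t}\leq\delta-\eta\varphi$ near infinity and hence forces any large supremum of $\varphi$ to be attained at an interior point, where $\varphi_{s}=0$ and the critical-point identity applies without contamination.
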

\begin{proof}
We first prove a useful characterization of the behaviour of the second order spatial derivatives at infinity.
\begin{claim}\label{secondderivativesdecay}
For any $t\in \left[\frac{T}{2},T\right)$ we have 
\[
\bb_{ss}(x,t) \xrightarrow[]{x\nearrow \infty} 0, \,\,\,\,\,\,\,\,\,\, \cc_{ss}(x,t)\xrightarrow[]{x\nearrow \infty} 0.
\]
\end{claim}
\begin{proof}[Proof of Claim \ref{secondderivativesdecay}]
From the proof of Lemma \ref{firstderivativesbounded} we see that $\bb_{s}\rightarrow 0$ at infinity, which implies that the integral of $\bb_{ss}(\cdot,t)$ has a finite limit for any $t\geq T/2.$ By Shi's derivative estimates and the Koszul formula we find that for any $t\in[T/2,T)$ there exists $\alpha(t) > 0$ such that
\[
\alpha (t) \geq \lvert \nabla \text{Rm}_{g(t)}(\partial_{s},\partial_{s},X_{1}/\bb,\partial_{s},X_{1}/\bb)\rvert = \lvert \partial_{s}k_{01} \rvert = \left \vert \partial_{s}\left(\frac{\bb_{ss}}{\bb}\right) \right \vert,
\] 
\noindent which then proves that $\lvert\bb_{sss}\rvert(\cdot,t)\leq \alpha(t)$ being both $\bb$ and $\bb_{s}$ uniformly bounded. Therefore $\bb_{ss}(x,t)\rightarrow 0$ as $x\rightarrow \infty$ for any $t\in [T/2,T)$. 
\\Again from the proof of Lemma \ref{firstderivativesbounded} we derive that the integral of $(\bb^{2}\cc)_{ss}(\cdot,t)$ is convergent for any $t\in [T/2,T)$. By computing the derivative $(\bb^{2}\cc)_{sss}$ and using again Shi's derivative estimates we obtain that $(\bb^{2}\cc)_{ss}\rightarrow 0$, which also implies $\cc_{ss}(x,t)\rightarrow 0$ as $x\rightarrow \infty$ for any $t\in [T/2,T)$.
\end{proof}
By the boundary conditions the function $\varphi = 1/\cc - 1/\bb$ is continuously defined at the origin and identically zero. From \eqref{uniformlowerboundforcb} we see that $\varphi$ is bounded along any time slice $\R^{4}\times \{t\}$, for $t\in [0,T)$. The evolution equation of $\varphi$ is given by
\begin{equation}\label{timederivativeuseful}
\varphi_{t} = \Delta \varphi + \frac{\bb_{s}^{2}}{\bb^{3}} - \frac{\cc_{s}^{2}}{\cc^{3}} + \frac{1}{\bb^{3}}\left(-4 + 2 \frac{\cc}{\bb} + 2 \frac{\cc^{2}}{\bb^{2}} \right).
\end{equation}
\noindent By Lemma \ref{firstderivativesbounded} and Claim \ref{secondderivativesdecay} we see that for any $\delta > 0$ and $t\in [T/2,T)$ there exists $x_{0} = x_{0}(\delta,t)$ such that the time derivative of $\varphi$ can be bounded for $x$ larger than $x_{0}$ as 
\[
\varphi_{t} \leq \delta + \frac{1}{\bb^{3}}\left(-4 + 2\frac{\cc}{\bb} + 2\frac{\cc^{2}}{\bb^{2}} \right) \leq \delta - \frac{2\cc}{\bb^{3}}\varphi \leq \delta -\frac{2\varepsilon}{\bb^{2}}\varphi \leq \delta- \eta\varphi,
\]
\noindent for some $\eta > 0$, where we have used Lemma \ref{consistencyRFassumption1} and Lemma \ref{finite-timesingularity}. Therefore, if $\varphi$ does not stay bounded as $t\nearrow T$, then there exists a sequence of maxima diverging as the solution approaches its maximal time of existence. 
\\We introduce the quantity $\bar{\alpha} \doteq  L\varepsilon^{-1}(\varepsilon^{-1} - 1)$, where $L = \sup_{\R^{4}}(\bb_{s}^{2}-4)/\bb(\cdot,0)$ and $\varepsilon$ is chosen to satisfy Lemma \ref{consistencyRFassumption1}. Suppose that $(p_{0},t_{0})$ is a space-time maximum point among prior times where $\varphi$ attains some value greater than $\bar{\alpha}$. By evaluating \eqref{timederivativeuseful} at $(p_{0},t_{0})$ we see that
\[
\varphi_{t}(p_{0},t_{0}) \leq \frac{1}{\bb^{3}}\left(\bb_{s}^{2}\left(1 - \frac{\cc}{\bb} \right) - 4 + 2\frac{\cc}{\bb} + 2 \frac{\cc^{2}}{\bb^{2}} \right).
\]
\noindent Using Lemma \ref{bssquareminus4}, we can estimate the time derivative from above as
\begin{align*}
\varphi_{t}(p_{0},t_{0}) &\leq \frac{1}{\bb^{3}}\left((L\bb + 4)\left(1 - \frac{\cc}{\bb} \right) - 4 + 2\frac{\cc}{\bb} + 2 \frac{\cc^{2}}{\bb^{2}} \right) \\ 
&= \frac{1}{\bb^{3}}\left(- 2\frac{\cc}{\bb} + 2 \frac{\cc^{2}}{\bb^{2}} + L(\bb - \cc)\right) \\
&= \frac{\cc}{\bb^{3}}\varphi\left(- 2\frac{\cc}{\bb} + L\bb \right) \leq \frac{\cc}{\bb^{3}}\varphi\left(-2\varepsilon + L\bb \right).
\end{align*}
\noindent From the definition of $\varphi$ we derive 
\[
\bb \leq \frac{1}{\varphi}\left(\varepsilon^{-1} - 1\right ),
\]
\noindent which then yields
\[
\varphi_{t}(p_{0},t_{0}) \leq \frac{\cc}{\bb^{3}}\varphi\left(-2\varepsilon + L\frac{1}{\varphi}\left(\varepsilon^{-1} - 1\right ) \right) \leq \frac{\cc}{\bb^{3}}\varphi\left(-2\varepsilon + \varepsilon \right) < 0.
\]
\end{proof}
An analogous bound holds for the first order spatial derivatives. Namely, we have the following
\begin{lemma}\label{rotationalsymmetryorder1}
There exists $\alpha > 0$ such that
\[
\left \vert \frac{\cc_{s}}{\cc} - \frac{\bb_{s}}{\bb}\right \vert \leq \alpha.
\]
\end{lemma}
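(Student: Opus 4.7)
The plan is to derive an evolution equation for $\psi \doteq \cc_s/\cc - \bb_s/\bb = (\log(\cc/\bb))_s$ and apply a parabolic maximum principle in the spirit of Lemma \ref{rotationalsymmetry}. By the boundary conditions \eqref{smoothnessorigin}, $\psi$ extends continuously to $0$ at the origin, and by Claim \ref{secondderivativesdecay} together with the fact (established in the proof of Lemma \ref{firstderivativesbounded}) that both $\bb_s$ and $\cc_s$ vanish at infinity on each time slice, $\psi(\cdot,t) \to 0$ as $x \to \infty$ for every $t \in [T/2, T)$.

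Starting from \eqref{evolutionlogcb} for $f = \log(\cc/\bb)$, differentiating in $s$ and using the commutator formula \eqref{commutatorformula}, I expect to obtain
\[
\psi_t = \Delta\psi - \left(2\frac{\bb_s^2}{\bb^2} + \frac{\cc_s^2}{\cc^2} + \frac{8\cc^2}{\bb^4}\right)\psi - \frac{8\bb_s}{\bb^3}\left(1 - \frac{\cc^2}{\bb^2}\right).
\]
The linear coefficient of $\psi$ is strictly negative; moreover, since $\bb_s \geq 0$ by Lemma \ref{cruciallemma} and $\cc \leq \bb$ by Lemma \ref{consistencyRFassumption1}, the forcing term is non-positive. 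Consequently, at any interior positive spatial maximum of $\psi$ both the reaction and the forcing terms have a favourable sign, and the maximum principle gives $\sup_{\R^{4}}\psi_{+}(\cdot,t) \leq \sup_{\R^{4}}\psi_{+}(\cdot,0)$.

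The main work is the lower bound on $\psi$, equivalently the upper bound on $\eta \doteq -\psi$. For $\eta$ the forcing becomes positive and a priori singular as $\bb \to 0$, but it is dominated by the damping coefficient $8\cc^{2}/\bb^{4}$. At a first-time space-time maximum of $\eta$ with value $L > 0$, retaining only this damping term, the maximum principle yields
\[
L \leq \frac{\bb\,\bb_{s}\,(1 - \cc^{2}/\bb^{2})}{\cc^{2}} = \bb_{s}\,\varphi\,\left(1 + \frac{\bb}{\cc}\right),
\]
where $\varphi = 1/\cc - 1/\bb$ is the quantity controlled in Lemma \ref{rotationalsymmetry}. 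By Lemma \ref{firstderivativesbounded} $\bb_{s}$ is uniformly bounded, by Lemma \ref{rotationalsymmetry} $\varphi$ is uniformly bounded, and by \eqref{uniformlowerboundforcb} we have $\cc/\bb \geq \varepsilon > 0$ uniformly in the space-time. Hence $L$ is bounded by a constant depending only on $g_{0}$, which together with the upper bound on $\psi$ proves the lemma.

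The principal obstacle is justifying the maximum principle rigorously on the non-compact domain $\R^{4}\times[0,T)$; in particular, one must rule out escape of the supremum to spatial infinity. I would follow the barrier argument of Lemma \ref{cruciallemma}: introduce $W(x,t) = \exp(s^{2}/(1-\beta t) + \lambda t)$ with suitable $\beta,\lambda$, apply the comparison to $\eta + \epsilon W$ (which diverges at infinity thanks to Lemma \ref{derivativesexponentiallybounded} and standard distance-distortion estimates), run the damping-versus-forcing comparison on the perturbed quantity, and finally send $\epsilon \to 0$; strict positivity follows by strong maximum principle and real analyticity of Ricci flow solutions.
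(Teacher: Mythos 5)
Your proposal is correct and follows essentially the paper's own argument: the same evolution equation for $\psi = \cc_s/\cc - \bb_s/\bb$, the same use of the damping term $8\cc^{2}/\bb^{4}$ together with Lemma \ref{rotationalsymmetry} and the uniform bounds from Lemma \ref{consistencyRFassumption1} and Lemma \ref{firstderivativesbounded} to control the forcing at a first-time space-time extremum. The only (immaterial) differences are stylistic: you exploit $\bb_s\geq 0$ to dispose of the upper bound immediately while the paper treats both signs by the same forcing estimate, and you invoke a barrier as in Lemma \ref{cruciallemma}, which is belt-and-braces here since $\psi$ already vanishes at the origin and at spatial infinity so the supremum is genuinely attained.
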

\begin{proof}
Define $\psi\doteq \cc_{s}/\cc - \bb_{s}/\bb$. The function $\psi$ extends to zero at the origin due to the boundary conditions. As argued before $\psi(x,t)\rightarrow 0$ as $x\rightarrow \infty$ for any $t\in [0,T)$. 
Consider the upper bound. Suppose that there exists a large value $\bar{\alpha}$ which $\psi$ attains for the first time at some maximum space-time point $(p_{0},t_{0})$. The evolution equation of $\psi$ is 
\[
\psi_{t} = \Delta \psi -\psi\left(\frac{\cc_{s}^{2}}{\cc^{2}} + 8 \frac{\cc^{2}}{\bb^{4}} + 2\frac{\bb_{s}^{2}}{\bb^{2}}\right) - 8\frac{\bb_{s}}{\bb^{3}}\left(1 - \frac{\cc^{2}}{\bb^{2}} \right).
\]
\noindent We can then evaluate both sides at $(p_{0},t_{0})$ and use Lemma \ref{rotationalsymmetry} to get
\begin{align*}
\psi_{t}(p_{0},t_{0}) &\leq \frac{1}{\bb^{2}}\left(-\bar{\alpha}\left(\frac{\bb^{2}\cc_{s}^{2}}{\cc^{2}} + 8 \frac{\cc^{2}}{\bb^{2}} + 2\bb_{s}^{2} \right) + 8\frac{\cc \lvert\bb_{s}\rvert}{\bb}\left (1 + \frac{\cc}{\bb}\right)\left(\frac{1}{\cc}-\frac{1}{\bb}\right) \right) \\ &\leq \frac{8}{\bb^{2}}\left(-\bar{\alpha}\frac{\cc^{2}}{\bb^{2}} + 2\alpha\lvert\bb_{s}\rvert\right) \leq \frac{8}{\bb^{2}}\left(-\bar{\alpha}\varepsilon^{2} + 2\alpha^{2}\right) < 0,
\end{align*}
\noindent for $\bar{\alpha}$ sufficiently large, with $\varepsilon$ satisfying Lemma \ref{consistencyRFassumption1}. The same argument shows the existence of a uniform lower bound.
\end{proof}
We may now improve Lemma \ref{bssquareminus4}.
\begin{lemma}\label{bssquaredminus1}
There exists $\alpha> 0$ such that
\[
\frac{1}{\bb}\left(\bb_{s}^{2} - 1\right) \leq \alpha.
\]
\end{lemma}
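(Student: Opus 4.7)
The plan is to apply the space-time maximum principle to $\varphi \doteq (\bb_s^2 - 1)/\bb$, refining the argument of Lemma \ref{bssquareminus4} by exploiting the sharper rotational-symmetry estimates of Lemmas \ref{rotationalsymmetry} and \ref{rotationalsymmetryorder1}. First I would record the boundary behaviour: the smoothness conditions \eqref{smoothnessorigin} give $\bb = s + O(s^3)$, so $\varphi(s,t) = O(s) \to 0$ as $s \to 0^+$, while from the proof of Lemma \ref{firstderivativesbounded} one has $\bb_s(x,t) \to 0$ as $x \to \infty$ for each $t$, hence $\varphi(x,t) \to -1/\bb(\infty,t) \leq 0$. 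Consequently, any value $\bar K > 0$ of $\sup \varphi$ must be attained at an interior space-time point.

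For the evolution equation I would write $\varphi = \varphi_4 + 3/\bb$ with $\varphi_4$ as in Lemma \ref{bssquareminus4} and combine its PDE with the heat-type identity $\partial_t(1/\bb) - \Delta(1/\bb) = -\bb_s^2/\bb^3 - 2\cc^2/\bb^5 + 4/\bb^3$. At an interior maximum with value $\bar K$ the first-order condition forces $\bb_s \ne 0$ and $\bb_{ss} = \bar K /2$, and substituting $\bb_s^2 = 1 + \bar K \bb$ yields
\[
\varphi_t - \Delta \varphi \leq -\tfrac{7}{2}\tfrac{\bar K^2}{\bb} + \tfrac{7 \bar K}{\bb^2} + \tfrac{6}{\bb^3} - \tfrac{14 \cc^2 \bar K}{\bb^4} - \tfrac{12 \cc^2}{\bb^5} + \tfrac{8 \cc \cc_s \bb_s}{\bb^4} - \tfrac{2 \cc_s^2 \bb_s^2}{\bb \cc^2}.
\]
The crucial step will be to expand the last two terms through Lemma \ref{rotationalsymmetryorder1} in the form $\cc_s = (\cc/\bb)\bb_s + E$ with $|E| \leq C_\star \cc$. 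The expansion $-2\cc_s^2\bb_s^2/(\bb\cc^2) \leq -2\bb_s^4/\bb^3 + O(1/\bb^2)$ together with $\bb_s^4 = 1 + 2\bar K \bb + \bar K^2\bb^2$ produces an \emph{extra} negative contribution $-2\bar K^2/\bb$, upgrading the coefficient of $\bar K^2/\bb$ from $-\tfrac72$ to $-\tfrac{11}{2}$, and simultaneously cancelling the $+3/\bb^3$ and part of the $\bar K/\bb^2$ contributions. The mixed term $8\cc\cc_s\bb_s/\bb^4 = 8\cc^2\bb_s^2/\bb^5 + O(1/\bb^2)$ cancels further parts of $-14\cc^2\bar K/\bb^4 - 12\cc^2/\bb^5$. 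Finally Lemma \ref{rotationalsymmetry} in the sharp form $\cc^2/\bb^2 \geq (1 + C_r \bb)^{-2} \geq 1 - 2 C_r \bb$ converts the leftover $-\cc^2/\bb^4$ and $-\cc^2/\bb^5$ into a $-\bar K/\bb^2$ and a $-1/\bb^3$ that exactly absorb the remaining positive contributions, leaving
\[
\varphi_t - \Delta \varphi \leq -\tfrac{11}{2}\tfrac{\bar K^2}{\bb} - \tfrac{3 \bar K}{\bb^2} + \tfrac{12 C_r \bar K}{\bb} + \tfrac{C}{\bb^2}
\]
for a constant $C$ depending only on the constants from the earlier lemmas.

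Choosing $\bar K > \max(24 C_r/11,\, C/3) + 1$ makes the right-hand side strictly negative at the putative maximum, contradicting the strong maximum principle and delivering the uniform bound $\varphi \leq \alpha$. The main subtlety, and the step I would double-check most carefully, lies in the \emph{exact} cancellation of the $1/\bb^3$ contributions: a naive bound leaves a positive $1/\bb^3$ that dominates for $\bb$ small (near the singular orbit), and it is only the combination of the extra $-2\bar K^2/\bb$ generated by the refined expansion of $\cc_s^2\bb_s^2/(\bb\cc^2)$ via Lemma \ref{rotationalsymmetryorder1} with the sharp lower bound on $\cc^2/\bb^2$ from Lemma \ref{rotationalsymmetry} that forces the full cancellation and lets the dominant $-\bar K^2/\bb$ close the argument.
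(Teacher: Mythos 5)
Your proposal is correct and follows the same strategy as the paper: apply the space-time maximum principle to $\varphi = (\bb_s^2 - 1)/\bb$, use the critical-point identity $\bb_{ss} = \bar K/2$ to eliminate $\bb_{ss}$, use Lemma~\ref{rotationalsymmetryorder1} to replace $\cc_s$ with $(\cc/\bb)\bb_s$ up to $O(\cc)$ error, and use Lemma~\ref{rotationalsymmetry} to control $1 - \cc^2/\bb^2$. The only differences are presentational: you derive the PDE by writing $\varphi = \varphi_4 + 3/\bb$ rather than differentiating directly (the resulting evolution equation is identical), and you substitute $\bb_s^2 = 1 + \bar K\bb$ and $\cc^2/\bb^2 \geq 1 - 2C_r\bb$ at the outset, whereas the paper regroups the inequality into terms proportional to $(\bb_s^2-1)/\bb$ and $(1-\cc^2/\bb^2)/\bb$, each of which is uniformly bounded by the earlier lemmas; both devices make the troublesome $1/\bb^3$ contributions vanish. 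One small inaccuracy in your narrative: it is $+6/\bb^3$, not $+3/\bb^3$, that appears at that stage; the expansion $-2\bb_s^4/\bb^3 = -2/\bb^3 - 4\bar K/\bb^2 - 2\bar K^2/\bb$ cancels $2$ of the $6$, and the remaining $4/\bb^3$ is exactly cancelled by $-4\cc^2/\bb^5 \leq -4/\bb^3 + 8C_r/\bb^2$ in the next step, precisely as your final display records. So the conclusion is unaffected; this is a bookkeeping slip, not a gap.
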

\begin{proof}
Let us denote $(\bb_{s}^{2} - 1)/b$ by $\varphi$. A direct computation gives
\[
\varphi_{t} = \Delta\varphi - \frac{2\bb_{ss}^{2}}{\bb} - 3\frac{\bb_{s}^{4}}{\bb^{3}} + \frac{\bb_{s}^{2}}{\bb^{3}}\left(13 - 14\frac{\cc^{2}}{\bb^{2}} \right) - 2\frac{\bb_{s}^{2}\cc_{s}^{2}}{\bb\cc^{2}} + 8\frac{\cc\bb_{s}\cc_{s}}{\bb^{4}} - \frac{4}{\bb^{3}} + 2\frac{\cc^{2}}{\bb^{5}}.
\]
\noindent Since $\varphi$ is uniformly bounded from above at the origin and at spatial infinity we take $(p_{0},t_{0})$ to be the maximum space-time point where $\varphi = \bar{\alpha}$ for the first time, for some positive $\bar{\alpha}$ to be chosen below. We have 
\[
\varphi_{t}(p_{0},t_{0}) \leq \frac{1}{\bb^{2}}\left \{-2\frac{\bb\bb_{s}^{2}\cc_{s}^{2}}{\cc^{2}} + 8\frac{\cc\bb_{s}\cc_{s}}{\bb^{2}} - \frac{7\bb_{s}^{4}}{2\bb} + 14\frac{\bb_{s}^{2}}{\bb}\left(1 - \frac{\cc^{2}}{\bb^{2}}\right)  - \frac{9}{2\bb} + 2\frac{\cc^{2}}{\bb^{3}} \right \}.
\]
\noindent According to Lemma \ref{rotationalsymmetryorder1} we can bound $\cc_{s}$ in terms of $\bb_{s}$. It follows that there exists some positive constant $\alpha > 0$ such that
\begin{align*}
\varphi_{t}(p_{0},t_{0}) &\leq \frac{1}{\bb^{2}}\left \{\alpha - \frac{11\bb_{s}^{4}}{2\bb} + \frac{\bb_{s}^{2}}{\bb}\left(14 - 6\frac{\cc^{2}}{\bb^{2}}\right)  - \frac{9}{2\bb} + 2\frac{\cc^{2}}{\bb^{3}} \right \} \\ &\leq \frac{1}{2\bb^{2}}\left \{\alpha - \frac{11\bb_{s}^{2}}{\bb}\left(\bb_{s}^{2} - 1 \right) + \frac{5}{\bb}\left(\bb_{s}^{2} - 1 \right) + \frac{12\bb_{s}^{2}}{\bb}\left(1 - \frac{\cc^{2}}{\bb^{2}}\right)  - \frac{4}{\bb}\left(1 - \frac{\cc^{2}}{\bb^{2}} \right)\right \}.
\end{align*}
\noindent We finally use Lemma \ref{firstderivativesbounded}, Lemma \ref{rotationalsymmetry} and the fact that $\varphi > 0$ implies $\bb_{s}^{2} > 1$ to derive
\[
\varphi_{t}(p_{0},t_{0}) \leq \frac{1}{2\bb^{2}}\left (\alpha - 6\bar{\alpha}\right) < 0,
\]
\noindent for $\bar{\alpha}$ large enough.
\end{proof}
Next, we extend the previous arguments to the second spatial derivatives. We show that away from the origin the mixed sectional curvatures are controlled by $\cc$ and hence by $\bb$ as in Corollary \ref{verticalsectionalcontrolled}. The flow is singular at some radius $x$ if and only if both $\cc(x,t)$ and $\bb(x,t)$ converge to zero as $t$ approaches $T$.  
\\ The evolution equations of the mixed sectional curvatures \eqref{sectionalhorizontal01} and \eqref{sectionalhorizontal03} are given by
\begin{align}\label{evolutionk01}
\begin{split}
(k_{01})_{t} &= \Delta (k_{01}) + 2k_{01}^{2} + k_{01}\left (\frac{8}{\bb^{2}} -\frac{8\cc^{2}}{\bb^{4}} - \frac{2\cc_{s}^{2}}{\cc^{2}} -\frac{4\bb_{s}^{2}}{\bb^{2}} \right ) + k_{03}\left (\frac{4\cc^{2}}{\bb^{4}} - \frac{2\bb_{s}\cc_{s}}{\bb\cc}\right) \\  &-\frac{4\cc_{s}^{2}}{\bb^{4}} + \frac{24\cc\bb_{s}\cc_{s}}{\bb^{5}} -\frac{2\bb_{s}\cc_{s}^{3}}{\bb\cc^{3}} -\frac{24\cc^{2}\bb_{s}^{2}}{\bb^{6}} + \frac{8\bb_{s}^{2}}{\bb^{4}} -\frac{2\bb_{s}^{4}}{\bb^{4}}
\end{split}
\end{align}
\noindent and
\begin{equation}\label{evolutionk03}
\begin{split}
(k_{03})_{t} &= \Delta (k_{03}) + 2k_{03}^{2} - 4k_{03}\left (\frac{\bb_{s}^{2}}{\bb^{2}} +\frac{\cc^{2}}{\bb^{4}}\right ) + 4k_{01}\left (\frac{2\cc^{2}}{\bb^{4}} - \frac{\bb_{s}\cc_{s}}{\bb\cc}\right) \\  & + \frac{12\cc_{s}^{2}}{\bb^{4}} +\frac{40\cc^{2}\bb_{s}^{2}}{\bb^{6}} -\frac{48\cc\bb_{s}\cc_{s}}{\bb^{5}} - \frac{4\bb_{s}^{3}\cc_{s}}{\bb^{3}\cc}.
\end{split}
\end{equation}
\\
\begin{lemma}\label{2ndderivativescontrolleb2}
There exists $\alpha > 0$ such that
\[
\lvert k_{01} \rvert + \lvert k_{03} \rvert \leq \frac{\alpha}{\bb^{2}}.
\]
\end{lemma}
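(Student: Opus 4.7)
The plan is to apply the maximum principle to the scale-invariant quantities $\Phi_{1}\doteq \bb^{2} k_{01}$ and $\Phi_{2}\doteq \bb^{2} k_{03}$, following the pattern established in Lemmas~\ref{rotationalsymmetry}--\ref{bssquaredminus1}. I would first verify that $\Phi_{1}, \Phi_{2}$ are bounded on the parabolic boundary of $\R^{4}\times[0,T)$. At the origin, the smoothness conditions \eqref{smoothnessorigin} ensure that $k_{01}$ and $k_{03}$ extend continuously to finite values while $\bb\to 0$, so $\Phi_{1}, \Phi_{2}\to 0$. At spatial infinity, for each $t\in[T/2,T)$, Claim~\ref{secondderivativesdecay} yields $\bb_{ss}(x,t), \cc_{ss}(x,t)\to 0$ as $x\to\infty$, while Lemma~\ref{finite-timesingularity} controls $\bb$ from above and \eqref{uniformlowerboundforcb} gives $\cc/\bb\geq\varepsilon$; hence $\Phi_{1}=-\bb\bb_{ss}\to 0$ and $\Phi_{2}=-\bb^{2}\cc_{ss}/\cc\to 0$ there. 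Combined with the bounded initial curvature, $\Phi_{i}$ is thus bounded on the parabolic boundary.

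Next I would derive the evolution equations for $\Phi_{1}, \Phi_{2}$ from \eqref{evolutionk01}, \eqref{evolutionk03}, the identity $(\bb^{2})_{t}-\Delta\bb^{2}=-4\bb_{s}^{2}+4\cc^{2}/\bb^{2}-8$ read off \eqref{Ricciflowpdes1} and \eqref{formulalaplacian}, and the product rule $\Delta(fg)=f\Delta g+g\Delta f+2f_{s}g_{s}$. I would then consider a combination $\Psi\doteq\Phi_{1}+A\Phi_{2}$, with a constant $A>0$ to be fixed, and suppose for contradiction that $\Psi$ attains an arbitrarily large value $\bar\alpha$ at some interior space-time maximum $(p_{0}, t_{0})$. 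At such a point $(k_{0j})_{s}$ can be expressed in terms of $\bb_{s}$ and $\Phi_{j}$, and the accumulated estimates---$|\bb_{s}|, |\cc_{s}|\leq C$ from Lemma~\ref{firstderivativesbounded}, $(\bb_{s}^{2}-1)/\bb\leq C$ from Lemma~\ref{bssquaredminus1}, $\bb/\cc-1\leq C\bb$ from Lemma~\ref{rotationalsymmetry}, $|\cc_{s}/\cc-\bb_{s}/\bb|\leq C$ from Lemma~\ref{rotationalsymmetryorder1}, together with $\varepsilon\leq\cc/\bb\leq 1$---reduce the reaction of $\Psi$ at $(p_{0}, t_{0})$ to $\bb^{-2}$ times a polynomial expression in $\bar\alpha$. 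The coefficient of $\bar\alpha$ is strictly negative thanks to the contributions $-2\cc_{s}^{2}/\cc^{2}\cdot\bb^{2}\approx -2$ and $-4\cc^{2}/\bb^{2}\approx -4$, while the lower-order terms of orders $1/\bb^{4}, 1/\bb^{6}$ in \eqref{evolutionk01}, \eqref{evolutionk03} collapse to subdominant contributions once the asymptotic identities $\bb_{s}^{2}=1+O(\bb)$, $\cc/\bb=1-O(\bb)$, $\cc_{s}=\bb_{s}+O(\bb)$ are substituted. For $A$ chosen large and $\bar\alpha$ sufficiently large, the reaction is strictly negative, contradicting $(\Psi)_{t}\geq 0$ at an interior maximum.

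The main obstacle is that the quadratic reaction $\bb^{2}\cdot 2k_{0j}^{2}=2\Phi_{j}^{2}/\bb^{2}$ is of the same order $\bb^{-2}$ as the negative linear-in-$\Phi_{j}$ terms, so the bound is not automatic: it relies on the precise cancellation of the $1/\bb^{4}, 1/\bb^{6}$ contributions, which in turn hinges on the asymptotic rigidity in the singular region provided by Lemmas~\ref{rotationalsymmetry}--\ref{bssquaredminus1} and on the rotational symmetry of first-order quantities from Lemma~\ref{rotationalsymmetryorder1}. The uniform lower bounds $\Phi_{i}\geq -\alpha$ follow by the analogous argument applied to $-\Psi$, yielding together $|k_{01}|+|k_{03}|\leq \alpha/\bb^{2}$.
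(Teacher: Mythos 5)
Your setup is on track --- the quantities $\bb^{2}k_{01}$ and $\bb^{2}k_{03}$ are indeed what needs to be controlled, and the boundary analysis (origin, spatial infinity via Claim~\ref{secondderivativesdecay}, initial time) is correct. However, the maximum principle step has a genuine gap. At a first interior space-time maximum where $\Psi = \Phi_{1} + A\Phi_{2}$ attains a large value $\bar\alpha$, the term $\bb^{2}\cdot 2k_{01}^{2} = 2\Phi_{1}^{2}/\bb^{2}$ in the reaction is of order $\bar\alpha^{2}/\bb^{2}$, whereas every negative term you list (the ones you estimate by $-2\cc_{s}^{2}/\cc^{2}\cdot\bb^{2}$, $-4\cc^{2}/\bb^{2}$, etc.) is only linear in $\bar\alpha$. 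These are \emph{not} of the same order once you let $\bar\alpha\to\infty$ in the contradiction argument; the quadratic in $\bar\alpha$ dominates and the sign of the reaction cannot be forced negative. The rotational-symmetry estimates in Lemmas~\ref{rotationalsymmetry}--\ref{bssquaredminus1} control the \emph{coefficients} (they tell you $\bb_{s}^{2}-1$, $\cc/\bb-1$, $\cc_{s}/\cc-\bb_{s}/\bb$ are $O(\bb)$ or $O(1)$), but they say nothing that could absorb an extra factor of $\bar\alpha$. So the claim that the bound ``relies on the precise cancellation of the $1/\bb^{4},1/\bb^{6}$ contributions'' misidentifies the obstruction.

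The paper resolves this by changing the quantity to which the maximum principle is applied: instead of $\bb\bb_{ss}$ (which is $-\Phi_{1}$) alone, it takes $\psi \doteq \bb\bb_{ss} - \mu\bb_{s}^{2} - \nu\cc_{s}^{2}$ (following \cite[Lemma~7]{IKS2}). The point is that $\Delta(\bb_{s}^{2}) = 2\bb_{s}\Delta\bb_{s} + 2\bb_{ss}^{2}$, so the $-\mu\bb_{s}^{2}$ correction injects a term $+2\mu\bb_{ss}^{2}$ into $\psi_{t}-\Delta\psi$ that combines with the $-2\bb_{ss}^{2}$ coming from $2k_{01}^{2}$ to give $2(\mu-1)\bb_{ss}^{2}$; taking $\mu=1$ makes the dangerous quadratic vanish exactly. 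The remaining cross-terms in $\cc_{ss}$ are then absorbed into $2\nu\cc_{ss}^{2}\geq 0$ via weighted Cauchy--Schwarz, using the prior estimates only to control the $O(1)$ coefficients, and one checks $\psi_{t}>0$ at the would-be minimum. The bound for $k_{03}$ is obtained the same way, separately, not through a linear combination $\Phi_{1}+A\Phi_{2}$. Without the $-\mu\bb_{s}^{2}-\nu\cc_{s}^{2}$ correction, your argument cannot close.
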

\begin{proof}
By \cite{shi} there exists $\alpha > 0$ such that $\lvert k_{01}\rvert \leq \alpha$ in $\R^{4}\times [0,T/2]$. Since by Lemma \ref{finite-timesingularity} $\bb$ is uniformly bounded from above, we deduce that $\bb^{2}\lvert k_{01}\rvert \leq \alpha$ and similarly for $\bb^{2}\lvert k_{03}\rvert$ using Lemma \ref{consistencyRFassumption1}. We may hence consider the case $t\in [T/2,T)$. Define the map $\psi \doteq \bb\bb_{ss} - \mu \bb_{s}^{2} - \nu\cc_{s}^{2}$ in $\R^{4}\times [T/2,T)$, for some $\mu$ and $\nu$ positive constants we will determine below. According to Claim \ref{secondderivativesdecay} $\psi(x,t)\rightarrow 0$ as $x\rightarrow \infty$ for any $t\in [T/2,T)$. We then adapt the argument in \cite[Lemma 7]{IKS2} to show that $\psi$ is uniformly bounded in the space-time region. Explicitly, at any stationary point of $\psi(\cdot,t)$ we have
\begin{align*}
\psi_{t} &= \Delta\psi -\bb\bb_{ss}\left(\frac{4\cc^{2}}{\bb^{4}} + \frac{2\cc_{s}^{2}}{\cc^{2}} + \frac{4\mu\bb_{s}^{2}}{\bb^{2}} \right) - \frac{16\nu\cc^{3}\bb_{s}\cc_{s}}{\bb^{5}}  - (24 + 8\mu)\frac{\cc\bb_{s}\cc_{s}}{\bb^{3}}  + \frac{2\bb\bb_{s}\cc_{s}^{3}}{\cc^{3}}  \\ &- \frac{8\bb_{s}^{2}(\mu + 1)}{\bb^{2}}+ \frac{4\cc\cc_{ss}}{\bb^{2}} + \frac{4\nu\cc_{s}^{2}\cc_{ss}}{\cc} - \frac{8\nu\bb_{s}\cc_{s}\cc_{ss}}{\bb} - \frac{2\bb\bb_{s}\cc_{s}\cc_{ss}}{\cc^{2}} + \frac{12\nu\cc^{2}\cc_{s}^{2}}{\bb^{4}} + \frac{4\cc_{s}^{2}}{\bb^{2}} \\ &+ \frac{24\cc^{2}\bb_{s}^{2}}{\bb^{4}} + \frac{12\mu\cc^{2}\bb_{s}^{2}}{\bb^{4}} + \frac{2\mu\bb_{s}^{2}\cc_{s}^{2}}{\cc^{2}} + \frac{4\nu\bb_{s}^{2}\cc_{s}^{2}}{\bb^{2}} + \frac{2\bb_{s}^{4}}{\bb^{2}} + 2\nu\cc_{ss}^{2} + 2(\mu -1)\bb_{ss}^{2}.  
\end{align*}
\noindent Suppose $\psi$ attains some negative value $-\bar{\alpha}$ for the first time at $t_{0}\in [T/2,T)$. By Lemma \ref{firstderivativesbounded} we can choose $\bar{\alpha}$ sufficiently large such that  $\bb\bb_{ss} \leq -\bar{\alpha}/4$. Therefore we get 
\[
-\bb\bb_{ss} \left(\frac{4\cc^{2}}{\bb^{4}} + \frac{2\cc_{s}^{2}}{\cc^{2}} + \frac{4\mu\bb_{s}^{2}}{\bb^{2}} \right) \geq \bar{\alpha}\left(\frac{\cc^{2}}{2\bb^{4}} + \frac{\cc_{s}^{2}}{2\cc^{2}} + \frac{\mu\bb_{s}^{2}}{\bb^{2}}\right).
\]
\noindent Evaluating the evolution equation of $\psi$ at $(p_{0},t_{0})$, we have (provided we set $\mu \geq 1$)
\begin{align*}
\psi_{t}(p_{0},t_{0}) &\geq 2\nu\cc_{ss}^{2} + \bar{\alpha}\left(\frac{\cc^{2}}{2\bb^{4}} + \frac{\cc_{s}^{2}}{2\cc^{2}} + \frac{\mu\bb_{s}^{2}}{\bb^{2}}\right) - \frac{16\nu\cc^{3}\bb_{s}\cc_{s}}{\bb^{5}}  - (24 + 8\mu)\frac{\cc\bb_{s}\cc_{s}}{\bb^{3}}   \\ &+ \frac{2\bb\bb_{s}\cc_{s}^{3}}{\cc^{3}} - \frac{8\bb_{s}^{2}(\mu + 1)}{\bb^{2}}+ \frac{4\cc\cc_{ss}}{\bb^{2}} + \frac{4\nu\cc_{s}^{2}\cc_{ss}}{\cc} - \frac{8\nu\bb_{s}\cc_{s}\cc_{ss}}{\bb} - \frac{2\bb\bb_{s}\cc_{s}\cc_{ss}}{\cc^{2}}.
\end{align*}
\noindent One can then estimate the remaining terms exactly as in \cite{IKS2} by using the uniform boundedness of the first spatial derivatives and Lemma \ref{consistencyRFassumption1}. Namely, we can use the weighted Cauchy-Schwarz inequality to get
\[
\left \vert \frac{2\bb\bb_{s}\cc_{s}\cc_{ss}}{\cc^{2}} \right \vert \leq \frac{\cc_{ss}^{2}}{2} + 2\frac{\bb^{2}\bb_{s}^{2}\cc_{s}^{2}}{\cc^{4}} \leq \frac{\cc_{ss}^{2}}{2} + 2\frac{\alpha^{2}\bb^{2}\cc_{s}^{2}}{\cc^{4}} \leq \frac{\cc_{ss}^{2}}{2} + 2\frac{\alpha^{2}\varepsilon^{-2}\cc_{s}^{2}}{\cc^{2}},
\]
\noindent and similarly for the others. Therefore there exists a uniform constant $\alpha$ such that the time derivative at $(p_{0},t_{0})$ is bounded from below as
\[
\psi_{t}(p_{0},t_{0}) \geq (\nu - 1)\cc_{ss}^{2} + \bar{\alpha}\left(\frac{\cc^{2}}{2\bb^{4}} + \frac{\cc_{s}^{2}}{2\cc^{2}} + \frac{\mu\bb_{s}^{2}}{\bb^{2}}\right) - \alpha(\mu + \nu + 1)\left(\frac{\cc^{2}}{2\bb^{4}} + \frac{\cc_{s}^{2}}{2\cc^{2}} + \frac{\mu\bb_{s}^{2}}{\bb^{2}} \right) > 0,
\]
\noindent once we choose $\mu = 1, \nu = 2$ and $\bar{\alpha}$ sufficiently large. The existence of a uniform upper bound follows from a similar argument by considering $\tilde{\psi} \doteq \bb\bb_{ss} + \mu\bb_{s}^{2} + \nu\bb_{s}^{2}$. The very same proof applies for $k_{03}$. 
\end{proof}
We can finally show that both $\cc$ and $\bb$ admit limits as the flow approaches its maximal time of existence $T < \infty$.
\begin{corollaryy}\label{cadmitslimit}
For any $x\geq 0$ the limits $\lim_{t\nearrow T}\cc(x,t)$ and $\lim_{t\nearrow T}\bb(x,t)$ exist and are finite.
\end{corollaryy}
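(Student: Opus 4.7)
The strategy is to show that the squared coefficients $\bb^{2}(x,\cdot)$ and $\cc^{2}(x,\cdot)$ are uniformly Lipschitz continuous in $t\in[0,T)$, with a Lipschitz constant $C>0$ independent of $x\in\R^{4}$. Once this is established, for each fixed $x\geq 0$ the function $t\mapsto \bb^{2}(x,t)$ is Cauchy as $t\nearrow T$ and is bounded by Lemma \ref{finite-timesingularity}, so it admits a finite limit; the existence of $\lim_{t\nearrow T}\bb(x,t)$ then follows by continuity of the square root on $[0,\infty)$, and the same argument applies to $\cc$. The case $x=0$ is handled separately and trivially by the smoothness conditions \eqref{smoothnessorigin}, which force $\bb(0,t)=\cc(0,t)=0$ for every $t$.

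To obtain the Lipschitz bound I would multiply \eqref{Ricciflowpdes1} by $2\bb$ and \eqref{Ricciflowpdes2} by $2\cc$, obtaining
\[
\partial_{t}\bb^{2} \;=\; 2\bb\bb_{ss} + 2\frac{\bb}{\cc}\bb_{s}\cc_{s} + 2\bb_{s}^{2} + 4\frac{\cc^{2}}{\bb^{2}} - 8,
\]
\[
\partial_{t}\cc^{2} \;=\; 2\cc\cc_{ss} + 4\frac{\cc}{\bb}\bb_{s}\cc_{s} - 4\frac{\cc^{4}}{\bb^{4}}.
\]
Each term on the right-hand sides is uniformly bounded in $\R^{4}\times[0,T)$: the first-order spatial derivatives $\bb_{s}$ and $\cc_{s}$ are uniformly controlled by Lemma \ref{firstderivativesbounded}; the ratio $\cc/\bb$ is pinched in $[\varepsilon,1]$ by \eqref{uniformlowerboundforcb}, which also gives $\bb/\cc\leq \varepsilon^{-1}$; and the second-derivative contributions $2\bb\bb_{ss}=-2\bb^{2}k_{01}$ and $2\cc\cc_{ss}=-2\cc^{2}k_{03}$ are bounded thanks to $\cc\leq \bb$ together with Lemma \ref{2ndderivativescontrolleb2}. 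Assembling these estimates yields a uniform $C>0$ with $|\partial_{t}\bb^{2}|+|\partial_{t}\cc^{2}|\leq C$ on $\R^{4}\times[0,T)$, which is precisely the required Lipschitz property.

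The only point that requires a bit of thought is the choice of the quantity to differentiate. The Ricci flow equations for $\bb$ and $\cc$ themselves contain the factors $\cc_{s}/\cc$ and $\bb_{s}/\bb$, which degenerate where $\bb$ or $\cc$ become small, so a direct uniform bound on $\bb_{t}$ or $\cc_{t}$ cannot be expected. Passing to $\bb^{2}$ and $\cc^{2}$ absorbs one factor of $\bb$ or $\cc$ into these denominators, transforming them into the ratio $\bb/\cc$, which is exactly the quantity kept under control by the preserved ordering \eqref{uniformlowerboundforcb} established at the beginning of this section. In this sense the corollary is essentially an algebraic consequence of the earlier lemmas, with no additional maximum principle or monotonicity argument needed.
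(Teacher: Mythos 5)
Your proof is correct and takes essentially the same route as the paper: both bound $\partial_{t}\bb^{2}$ and $\partial_{t}\cc^{2}$ uniformly on $\R^{4}\times[0,T)$ using Lemmas~\ref{firstderivativesbounded} and~\ref{2ndderivativescontrolleb2} together with the preserved pinching~\eqref{uniformlowerboundforcb}, and then read off the existence of finite limits from Lipschitz continuity in time on the bounded interval $[0,T)$. The paper's proof is one line but identical in substance; your version merely spells out the Cauchy argument and the trivial case $x=0$.
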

\begin{proof}
By applying Lemmas \ref{firstderivativesbounded} and \ref{2ndderivativescontrolleb2} we get
\[
\lvert(\cc^{2})_{t}\rvert \leq  2\lvert\cc\cc_{ss}\rvert + 4 \left \vert\frac{\cc}{\bb}\bb_{s}\cc_{s} - \frac{\cc^{4}}{\bb^{4}}\right \vert \leq \alpha.
\]
\noindent The same argument works for $\bb$ as well.
\end{proof}
The curvature is hence uniformly controlled along any Euclidean hypersphere where the components $\bb$ and $\cc$ do not converge to zero as $t\nearrow T$. Namely, from Corollary \ref{verticalsectionalcontrolled} and Lemma \ref{2ndderivativescontrolleb2} it follows that there exists a positive constant $\alpha$ such that  
\begin{equation}\label{normsquaredcurvatureboundedbyc}
\sup_{\R^{4}\times [0,T)}\bb^{2}\lvert \text{Rm}_{g(t)}\rvert_{g(t)} \leq \alpha.
\end{equation}
\noindent Next, we show that around a singularity rotational symmetry type of bounds hold for the second spatial derivatives as well. 
\begin{lemma}\label{rotationalsymmetry2}
There exists $\alpha > 0$ such that
\[
\lvert k_{01} - k_{03} \rvert \leq \frac{\alpha}{\bb}.
\]
\end{lemma}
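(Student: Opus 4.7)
The bound refines the scale-invariant estimate $|k_{01}| + |k_{03}| \leq \alpha/\bb^{2}$ of Lemma \ref{2ndderivativescontrolleb2} by extracting one additional power of $\bb$ from the \emph{difference} $k_{01} - k_{03}$, in parallel with the zeroth- and first-order symmetry-enhancement estimates of Lemmas \ref{rotationalsymmetry} and \ref{rotationalsymmetryorder1}. My plan is to apply the maximum principle to
\[
\varphi \doteq \bb(k_{01} - k_{03}) = -\bb_{ss} + \frac{\bb}{\cc}\cc_{ss}
\]
and to $-\varphi$, establishing the uniform bound $|\varphi| \leq \alpha$ on $\R^{4} \times [0,T)$.

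First I would verify the boundary behaviour. The smoothness of $g(t)$ together with the boundary conditions \eqref{smoothnessorigin} keeps $k_{01}$ and $k_{03}$ bounded near the origin while $\bb \to 0$, so $\varphi$ extends continuously to zero at $\origin$. At spatial infinity for $t \in [T/2, T)$, the Shi-type argument used in Claim \ref{secondderivativesdecay} yields $\bb_{ss}(x,t), \cc_{ss}(x,t) \to 0$ as $x \to \infty$; combined with the upper bound on $\bb$ from Lemma \ref{finite-timesingularity} and the lower bound $\cc \geq \varepsilon \bb$ from Lemma \ref{consistencyRFassumption1}, this forces $\varphi(x,t) \to 0$. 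On $\R^{4} \times [0, T/2]$ the bound is immediate from Shi. Consequently any failure of the desired estimate must occur at an interior space-time maximum $(p_{0}, t_{0})$ with $\varphi(p_{0}, t_{0}) = \bar\alpha \gg 1$.

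I would then compute $\varphi_{t}$ from \eqref{Ricciflowpdes1}, \eqref{evolutionk01}, \eqref{evolutionk03} and the commutator \eqref{commutatorformula}. The mixed coupling in \eqref{evolutionk01}--\eqref{evolutionk03} organises into a diagonal contribution $-(k_{01}-k_{03})\bigl(4\cc^{2}/\bb^{4} - 2\bb_{s}\cc_{s}/(\bb\cc)\bigr)$ together with an inhomogeneous term $-k_{01}\bigl(4\cc^{2}/\bb^{4} - 2\bb_{s}\cc_{s}/(\bb\cc)\bigr)$. Expanding $2\bb_{s}\cc_{s}/(\bb\cc) = 2\bb_{s}^{2}/\bb^{2} + 2\bb_{s}(\cc_{s}/\cc - \bb_{s}/\bb)/\bb$ and invoking Lemma \ref{bssquaredminus1} ($\bb_{s}^{2} = 1 + O(\bb)$), Lemma \ref{rotationalsymmetry} ($\cc/\bb = 1 - O(\bb)$) and Lemma \ref{rotationalsymmetryorder1}, one finds
\[
\frac{4\cc^{2}}{\bb^{4}} - \frac{2\bb_{s}\cc_{s}}{\bb\cc} = \frac{2}{\bb^{2}} + O\bigl(\bb^{-1}\bigr)
\]
in the singular regime $\bb \ll 1$. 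Multiplying $\varphi_{t}$ by $\bb$ and evaluating at $(p_{0}, t_{0})$ then produces a dominant dissipative contribution $-2\bar\alpha/\bb^{2}$, while every remaining term, after applying Lemmas \ref{firstderivativesbounded}, \ref{rotationalsymmetry}, \ref{rotationalsymmetryorder1}, \ref{bssquaredminus1}, and \ref{2ndderivativescontrolleb2}, is bounded by a constant multiple of $\bb^{-2}$ with prefactor independent of $\bar\alpha$. Choosing $\bar\alpha$ large yields $\varphi_{t}(p_{0}, t_{0}) < 0$, contradicting the maximality; the argument for $-\varphi$ is identical up to signs.

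The main obstacle is controlling the quadratic cross term $2(k_{01}^{2} - k_{03}^{2}) = 2(k_{01}-k_{03})(k_{01}+k_{03})$, which after multiplying by $\bb$ produces $2\varphi(k_{01}+k_{03})$; since $|k_{01}+k_{03}|$ is only controlled at the scale-invariant level $O(\bb^{-2})$, this contribution is \emph{a priori} of the same order $O(\bar\alpha/\bb^{2})$ as the dissipation and has no fixed sign. To resolve the competition one must use the cascade of Lemmas \ref{rotationalsymmetry}--\ref{bssquaredminus1} to rewrite every occurrence of $\bb_{s}^{2}, \cc_{s}^{2}, \bb_{s}\cc_{s}, \cc^{2}$ in the source as a rotationally symmetric piece (which cancels the $k_{03}$ part against the $k_{01}$ part) plus an $O(\bb)$ correction, so that the net inhomogeneous contribution is genuinely $o(\bar\alpha/\bb^{2})$ and absorbable into the dissipation, possibly after a weighted Cauchy--Schwarz splitting analogous to the one employed in the proof of Lemma \ref{2ndderivativescontrolleb2}. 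This algebraic reorganisation is exactly where the preceding symmetry-enhancement estimates become indispensable, and it is the reason the lemma must be placed after Lemmas \ref{rotationalsymmetry}--\ref{2ndderivativescontrolleb2} in the sequence.
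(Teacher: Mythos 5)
Your proposal takes a genuinely different route from the paper, and unfortunately it has a gap that you yourself identify but do not close. The paper does \emph{not} apply the maximum principle to $\bb(k_{01}-k_{03})$. Instead it works with the quantity $\varphi = (\psi_s\bb)^2$, where $\psi \doteq \cc_s/\cc - \bb_s/\bb$ is exactly the quantity controlled in Lemma \ref{rotationalsymmetryorder1}. The algebraic relation is
\[
k_{01}-k_{03} = \psi_s + \psi\left(\frac{\cc_s}{\cc}+\frac{\bb_s}{\bb}\right),
\]
so a uniform bound on $\psi_s\bb$ delivers a uniform bound on $\bb\lvert k_{01}-k_{03}\rvert$ once Lemmas \ref{consistencyRFassumption1}, \ref{firstderivativesbounded} and \ref{rotationalsymmetryorder1} are invoked; that reduction is the final display of the paper's proof. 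The reason the paper prefers $\psi_s\bb$ over $\bb(k_{01}-k_{03})$ is precisely to avoid the difficulty you flag: the evolution of $\psi$ (and hence of $\psi_s$) contains no quadratic curvature nonlinearity of the form $k^2$, so no term like $2(k_{01}^2 - k_{03}^2)$ ever appears. In the evolution equation for $(\psi_s\bb)^2$ the dominant good term is $-\psi_s^2(8 + 12\cc^2/\bb^2)$, of size $-\bar\alpha/\bb^2$ with a \emph{universal} coefficient, while the bad inhomogeneous terms (the $G$ and $H$ contributions) are of size $\alpha\lvert\psi_s\rvert/\bb \sim \alpha\sqrt{\bar\alpha}/\bb^2$ and lose to the dissipation when $\bar\alpha$ is large.

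The concrete gap in your argument is this: at a maximum point where $\bb(k_{01}-k_{03}) = \bar\alpha$, the quadratic term $2(k_{01}-k_{03})(k_{01}+k_{03})$, after multiplication by $\bb$, produces $2\bar\alpha(k_{01}+k_{03})$, and by Lemma \ref{2ndderivativescontrolleb2} this is bounded in absolute value only by $2\alpha\bar\alpha/\bb^2$ where $\alpha$ is the (large, $g_0$-dependent, uncontrollable) constant from that lemma. Likewise, multiplying by $\bb$ introduces the reaction term $(\bb_t/\bb)\,\bb(k_{01}-k_{03})$, contributing another $O(\alpha\bar\alpha/\bb^2)$ piece with no definite sign. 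Both competitors scale \emph{linearly} in $\bar\alpha$, exactly like the dissipation you extract, so choosing $\bar\alpha$ large does not win; you would need the dissipative coefficient to dominate $\alpha$, which you cannot arrange. Saying the cascade of Lemmas \ref{rotationalsymmetry}--\ref{bssquaredminus1} should make these terms $o(\bar\alpha/\bb^2)$ is the step that needs a proof, and I do not see how to supply it along these lines: those lemmas give bounds with $g_0$-dependent constants, not smallness. The paper's device of passing to $\psi$ sidesteps the whole competition because the quadratic nonlinearity is simply absent from the equation for $\psi$, and that is the missing idea here.
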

\begin{proof}
We adapt the proof in \cite{IKS1}, whose argument works for a compact Type-I Ricci flow setting. Once we define $\psi\doteq \cc_{s}/\cc - \bb_{s}/\bb$, we consider the map 
\[
\varphi \doteq (\psi_{s}\bb)^{2} \equiv \left (\frac{\cc_{ss}}{\cc} - \frac{\cc_{s}^{2}}{\cc^{2}} - \frac{\bb_{ss}}{\bb} + \frac{\bb_{s}^{2}}{\bb^{2}} \right)^{2}\bb^{2}.
\]
\noindent The boundary conditions \eqref{smoothnessorigin} ensure that $\varphi(\origin,t) = 0$ for any $t$. From the proof of Lemma \ref{firstderivativesbounded} and Claim \ref{secondderivativesdecay} we derive that if $\varphi$ is not bounded, then for any sufficiently large value $\bar{\alpha}$ there exist $t_{0}\geq T/2$ and a maximum point $p_{0}$ such that $\varphi(p_{0},t_{0})= \bar{\alpha}$ for the first time. The evolution equation for $\varphi$ is 
\[
\varphi_{t} = \Delta \varphi - 2(\bb^{2})_{s}(\psi_{s}^{2})_{s} -2\bb^{2}\psi_{ss}^{2} - 2F\varphi - 2G\bb^{2}\psi\psi_{s} + 2 H\bb^{2}\psi_{s} - 4\bb_{s}^{2}\psi_{s}^{2} - 8\psi_{s}^{2} + 4\psi_{s}^{2}\frac{\cc^{2}}{\bb^{2}}, 
\]
\noindent where 
\[
F\doteq 4\frac{\bb_{s}^{2}}{\bb^{2}} + 2 \frac{\cc_{s}^{2}}{\cc^{2}} + 8 \frac{\cc^{2}}{\bb^{4}}, \,\,\,\,\,\,\, G \doteq \left(2\frac{\bb_{s}^{2}}{\bb^{2}} + \frac{\cc_{s}^{2}}{\cc^{2}} + 8 \frac{\cc^{2}}{\bb^{4}} \right)_{s} \,\,\,\,\,\,\, H \doteq -\left(8\frac{\bb_{s}}{\bb^{3}}\left(1 - \frac{\cc^{2}}{\bb^{2}}\right) \right)_{s}.
\]
\noindent Evaluating $\varphi$ at the maximum point $(p_{0},t_{0})$ we get
\begin{align*}
\varphi_{t}(p_{0},t_{0}) &\leq 2\bb_{s}^{2}\psi_{s}^{2} - \psi_{s}^{2}\left(8 - 4 \frac{\cc^{2}}{\bb^{2}}\right) + - 2F\varphi - 2G\bb^{2}\psi\psi_{s} + 2 H\bb^{2}\psi_{s} \\ &\leq - \psi_{s}^{2}\left(8 + 12 \frac{\cc^{2}}{\bb^{2}}\right) - 2G\bb^{2}\psi\psi_{s} + 2 H\bb^{2}\psi_{s}.
\end{align*}
\noindent From \eqref{normsquaredcurvatureboundedbyc} it easily follows that there exists some uniform constant $\alpha > 0$ such that $\lvert G \rvert \leq \alpha/\bb^{3}$. Being $\psi$ uniformly bounded (Lemma \ref{rotationalsymmetryorder1}), we have
\[
- 2G\bb^{2}\psi\psi_{s} \leq \alpha \frac{\lvert\psi_{s}\rvert}{\bb}.
\]
\noindent According to Lemma \ref{rotationalsymmetry} and Lemma \ref{rotationalsymmetryorder1} an analogous estimate can be found for $\lvert H \bb^{2}\psi_{s}\rvert$. Then 
\[
\varphi_{t}(p_{0},t_{0})\leq - \psi_{s}^{2}\left(8 + 12 \frac{\cc^{2}}{\bb^{2}}\right) + \alpha \frac{\lvert\psi_{s}\rvert}{\bb} \equiv \frac{\lvert \psi_{s}\rvert}{b}\left(\left(-8 -12\frac{\cc^{2}}{\bb^{2}}\right)\sqrt{\bar{\alpha}} + \alpha \right) < 0, 
\]
\noindent for $\bar{\alpha}$ sufficiently large. Therefore $\varphi$ is uniformly bounded and we get
\begin{align*}
\bb\lvert k_{01} - k_{03} \rvert &\leq \alpha + \left \vert \frac{\bb_{s}^{2}}{\bb} - \frac{\bb}{\cc}\frac{\cc_{s}^{2}}{\cc}\right \vert \\ &\leq \alpha + \lvert \bb_{s}\rvert \left \vert \frac{\bb_{s}}{\bb} - \frac{\cc_{s}}{\cc} \right \vert  + \lvert \cc_{s}\rvert\frac{\bb}{\cc} \left \vert \frac{\bb_{s}}{\bb} - \frac{\cc_{s}}{\cc} \right \vert \leq \alpha\left(1 + \alpha + \lvert \cc_{s}\rvert \frac{\bb}{\cc}\right) \leq \alpha\left( 1 + \varepsilon^{-1})\right),
\end{align*}
\noindent where we have used Lemma \ref{rotationalsymmetryorder1} and Lemma \ref{consistencyRFassumption1}. 
\end{proof}
We finally discuss the existence of lower bounds for the mixed sectional curvatures.
\begin{lemma}\label{estimatelog}
There exists $\alpha>0$ such that
\[
\cc_{ss}\cc\,\emph{log}\,c \geq - \alpha, \,\,\,\,\,\,\, \bb_{ss}\bb\,\emph{log}\,b \geq - \alpha.
\]
\end{lemma}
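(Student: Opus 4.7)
Since the two inequalities are symmetric under the interchange $\bb\leftrightarrow\cc$, the plan is to focus on $\Psi\doteq -\bb_{ss}\bb\log\bb$ and prove $\Psi\leq\alpha$; the companion bound for $\cc$ will follow identically from the evolution equation \eqref{Ricciflowpdes2} together with the pinching $\varepsilon\bb\leq\cc\leq\bb$ of Lemma \ref{consistencyRFassumption1}, which ensures $\cc\to 0$ only where $\bb\to 0$ at comparable rate.

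First I would reduce to the regime $\bb<1$. Since $\bb$ is uniformly bounded from above by Lemma \ref{finite-timesingularity}, $\log\bb$ is bounded from above; whenever $\bb\geq\delta>0$ the scale-invariant bound $|\bb\bb_{ss}|\leq\alpha$ from Lemma \ref{2ndderivativescontrolleb2} gives $|\Psi|\leq\alpha|\log\delta|$, so there is nothing to prove in that region. Moreover, the boundary conditions \eqref{smoothnessorigin} combined with the smoothness of $g(t)$ at $\origin$ force $\Psi(\origin,t)=0$ for all $t$, while the decay of $\bb_{ss}$ at spatial infinity derived in Claim \ref{secondderivativesdecay} gives $\Psi(x,t)\to 0$ as $x\to\infty$ for each $t\in[T/2,T)$. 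Hence, if $\Psi$ fails to be uniformly bounded above, there must exist an interior space-time point $(p_0,t_0)$ at which $\Psi$ attains some arbitrarily large value $\bar{\alpha}$ for the first time, with $\bb(p_0,t_0)<1$.

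The main step is a maximum principle argument at $(p_0,t_0)$. The evolution equation for $\Psi$ is derived from \eqref{Ricciflowpdes1} by differentiating twice in $s$, carefully handling the commutator \eqref{commutatorformula}, and combining the result with $(\log\bb)_t=\bb_t/\bb$. At an interior maximum the gradient term vanishes and the Laplacian is non-positive, so the sign of $\Psi_t$ is dictated by the reaction. Substituting the prior estimates $|\bb_s|,|\cc_s|\leq\alpha$ (Lemma \ref{firstderivativesbounded}), $\bb^2|k_{01}|,\bb^2|k_{03}|\leq\alpha$ (Lemma \ref{2ndderivativescontrolleb2}), $|\cc_s/\cc-\bb_s/\bb|\leq\alpha$ (Lemma \ref{rotationalsymmetryorder1}), together with the pinching of Lemma \ref{consistencyRFassumption1}, the reaction can be organized so that the leading contribution comes from the quadratic term $2k_{01}^{2}$ in \eqref{evolutionk01}, which at $(p_0,t_0)$ produces a coefficient of order $\bar{\alpha}^{2}/|\log\bb|$ with the favourable sign. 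Provided $\bar{\alpha}$ is chosen large enough (depending only on $g_0$) this term dominates the remaining $O(\bar{\alpha})$ and $O(1)$ contributions, forcing $\Psi_t(p_0,t_0)<0$ and yielding a contradiction.

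The main obstacle is the algebraic bookkeeping: each $s$-differentiation of the Ricci flow equation spawns zero-order terms of increasing negative order in $\bb$, and these have to be repeatedly reabsorbed using the a priori bounds above. The critical structural point is that the weight $\log\bb$ is subcritical against $1/\bb^{2}$, so the quadratic reaction $2k_{01}^{2}$ dominates at large $\bar{\alpha}$. The bound $\cc_{ss}\cc\log\cc\geq-\alpha$ is then proved in exactly the same way, using \eqref{evolutionk03} in place of \eqref{evolutionk01}, with the pinching of Lemma \ref{consistencyRFassumption1} allowing all $\cc^{-k}$ factors to be traded for $\bb^{-k}$ factors up to a factor $\varepsilon^{-k}$.
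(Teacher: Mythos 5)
Your outline matches the paper's strategy: consider the scalar $-\bb_{ss}\bb\log\bb$, show it vanishes at $\origin$ and decays at infinity (via \eqref{smoothnessorigin} and Claim \ref{secondderivativesdecay}), and run a first-time-interior-maximum argument where the quadratic curvature reaction ultimately dominates. The paper happens to treat $\cc_{ss}\cc\log\cc$ first and then observes that the $\bb$-case requires no modifications, but the order is immaterial. You also correctly identify the key reaction term: the paper's crucial favourable contribution $-2(\cc_{ss}/\cc)f$ is exactly the $2k_{03}^{2}$ term from \eqref{evolutionk03} rewritten (and likewise $2k_{01}^{2}$ for the $\bb$-version), and its magnitude at the extremum is of order $\bar\alpha^{2}/(\bb^{2}\lvert\log\bb\rvert)$ (you dropped a $\bb^{-2}$, but your remark that ``$\log\bb$ is subcritical against $1/\bb^{2}$'' shows you have the right picture).

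The genuine gap is that your list of a~priori inputs omits the second-order rotational-symmetry estimate, Lemma \ref{rotationalsymmetry2}, $\bb\lvert k_{01}-k_{03}\rvert\leq\alpha$, and the argument does not close without it. The evolution equation for $f$ contains a cross term proportional to $\log\cc\,(\cc_{ss}/\cc-\bb_{ss}/\bb)=\log\cc\,(k_{01}-k_{03})$ with a $\cc^{4}/\bb^{4}$ prefactor. If you bound this only via the scale-invariant estimates of Lemma \ref{2ndderivativescontrolleb2} (so $\lvert k_{01}-k_{03}\rvert\lesssim\bb^{-2}$), this cross term is of size $\lvert\log\cc\rvert/\cc^{2}$; after normalizing by the weight $\cc^{-2}\lvert\log\cc\rvert^{-1}$ that makes the good term $\bar\alpha^{2}$, it becomes $\lvert\log\cc\rvert^{2}$, which is $\gtrsim\bar\alpha^{2}/\alpha^{2}$ since $\bar\alpha\lesssim\lvert\log\cc\rvert$ at the extremum. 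Hence it is \emph{comparable} to, not dominated by, the quadratic reaction, and the maximum principle conclusion fails. It is precisely the extra factor of $\bb$ in Lemma \ref{rotationalsymmetry2} that downgrades this cross term to $\lvert\log\cc\rvert/\cc$, which after normalization is $\lvert\log\cc\rvert^{2}\cc$ — uniformly bounded — so that $\bar\alpha^{2}$ wins. The first-order estimate Lemma \ref{rotationalsymmetryorder1} that you do cite does not substitute for this. In a careful write-up you would need to invoke Lemma \ref{rotationalsymmetry2} explicitly, and indeed the whole purpose of that lemma in the paper is to enable this step.
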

\begin{proof}
We adapt the analogous argument in \cite{IKS1}. Consider the map $f \doteq \cc_{ss}\cc\,\text{log}\,c$, which is smooth in $\mathbb{R}^{4}\setminus \{\origin\}\times [0,T)$. Moreover $f$ extends continuously to the origin and $f(\origin,t) = 0$ as long as a solution exists. By Claim \ref{secondderivativesdecay} and the fact that $\cc$ is uniformly bounded from above, we deduce that $f(x,t)\rightarrow 0$ at spatial infinity for any $t\in[T/2,T)$. Suppose that there exist $(p_{0},t_{0})\in(\R^{4}\setminus\{\origin\})\times [T/2,T)$ and $\bar{\alpha}$ large to be chosen below such that $f(p_{0},t_{0}) = - \bar{\alpha}$ for the first time.
From \eqref{sectionalhorizontal03} and \eqref{normsquaredcurvatureboundedbyc} it follows 
\[
\bar{\alpha} = \lvert f(p_{0},t_{0}) \rvert \leq \alpha \lvert \text{log}\,\cc(p_{0},t_{0})\rvert.
\]
\noindent Since by Lemma \ref{finite-timesingularity} $\cc$ is uniformly bounded from above the last inequality implies $\text{log}\,\cc(p_{0},t_{0}) < 0$ and
\begin{equation}\label{controlcsslogestimate}
\cc_{ss}(p_{0},t_{0}) = \frac{\bar{\alpha}}{\cc\lvert\text{log}\,\cc\rvert}(p_{0},t_{0}) \geq \bar{\alpha}.
\end{equation}
\noindent for $\bar{\alpha}$ large enough. By direct computation we get
\begin{align*}
f_{t} &= \Delta f - 2\left(2 + \frac{1}{\text{log}\,\cc} \right)\frac{\cc_{s}}{\cc}f_{s} -\cc\,\text{log}\,\cc\left(12\frac{\cc\cc_{s}^{2}}{\bb^{4}} - 48\frac{\cc^{2}}{\bb^{5}}\bb_{s}\cc_{s} + 40 \frac{\cc^{3}}{\bb^{6}}\bb_{s}^{2} - 4\frac{\bb_{s}^{3}\cc_{s}}{\bb^{3}} \right) \\ & -8\frac{\cc^{4}}{\bb^{4}}\text{log}\,\cc\left(\frac{\cc_{ss}}{\cc} - \frac{\bb_{ss}}{\bb} \right) - 2\frac{\cc_{ss}}{\cc}\left(\frac{\cc^{4}}{\bb^{4}} + f \right) + 2\frac{\cc_{s}^{2}\cc_{ss}}{\cc}\left(2 + \frac{1}{\text{log}\,\cc}\right) \\ &-4\cc\,\text{log}\,\cc\left( \frac{\cc_{ss}\bb_{s}^{2}}{\bb^{2}} + \frac{\bb_{ss}\bb_{s}\cc_{s}}{\bb^{2}} - \frac{\cc_{ss}\cc_{s}^{2}}{\cc^{2}}\right).
\end{align*}
\noindent In the following the signs of $\bb_{s}$ and $\cc_{s}$ are not relevant. In particular we assume without loss of generality that $\cc_{s}\geq 0$. By Lemma \ref{rotationalsymmetryorder1} we have 
\[
12\frac{\cc\cc_{s}^{2}}{\bb^{4}} - 48\frac{\cc^{2}}{\bb^{5}}\bb_{s}\cc_{s} + 40 \frac{\cc^{3}}{\bb^{6}}\bb_{s}^{2} - 4\frac{\bb_{s}^{3}\cc_{s}}{\bb^{3}} \geq \frac{4}{\bb^{2}}\left(-\alpha - \bb_{s}^{2}\frac{\cc}{\bb^{2}}\left(\bb_{s}^{2} - \frac{\cc^{2}}{\bb^{2}} \right) \right) \geq -\frac{\alpha}{\cc^{2}},
\]
\noindent where we have used Lemma \ref{rotationalsymmetry} and Lemma \ref{bssquaredminus1} to derive the last inequality. According to Lemma \ref{rotationalsymmetry2} it holds
\[
-8\frac{\cc^{4}}{\bb^{4}}\text{log}\,\cc\left(\frac{\cc_{ss}}{\cc} - \frac{\bb_{ss}}{\bb} \right) \geq -\alpha\frac{\lvert\text{log}\,\cc\rvert}{\cc}.
\]
\noindent By choosing $\bar{\alpha}$ large enough (and hence $\cc(p_{0},t_{0})$ small) it follows that $2(\cc_{s}\cc_{ss}/\cc)(2+1/\text{log}\,\cc)(p_{0},t_{0}) \geq 0$. Finally, Lemma \ref{rotationalsymmetry}, Lemma \ref{rotationalsymmetryorder1}, and Lemma \ref{rotationalsymmetry2} yield the bounds
\begin{align*}
&\left(\frac{\cc_{ss}\bb_{s}^{2}}{\bb^{2}} + \frac{\bb_{ss}\bb_{s}\cc_{s}}{\bb^{2}} - \frac{\cc_{ss}\cc_{s}^{2}}{\cc^{2}}\right)(p_{0},t_{0}) \geq \left(\frac{\bb_{ss}\bb_{s}\cc_{s}}{\bb^{2}} - \frac{\cc_{ss}\cc_{s}^{2}}{\cc^{2}}\right)(p_{0},t_{0}) = \\  &\left(\frac{\bb_{s}\cc_{s}}{\bb}\left(\frac{\bb_{ss}}{\bb} - \frac{\cc_{ss}}{\cc} \right) + \frac{\cc_{ss}}{\cc}\frac{\cc_{s}(\cc(\bb_{s}-\cc_{s}) + \cc_{s}(\cc-\bb))}{\bb\cc}\right)(p_{0},t_{0}) \geq \left(-\frac{\alpha}{\cc^{2}} - \alpha\frac{\cc_{ss}}{\cc}\right)(p_{0},t_{0}).
\end{align*}
\noindent Evaluating the evolution equation of $f$ at $(p_{0},t_{0})$ and using \eqref{controlcsslogestimate} we get the lower bound
\begin{align*}
f_{t}(p_{0},t_{0}) &\geq -\alpha\left(\frac{\lvert \text{log}\,\cc\rvert}{\cc} + \lvert \text{log}\,\cc\rvert \cc_{ss}\right) + 2\frac{\cc_{ss}}{\cc}\left( \bar{\alpha} - 1 \right) = \frac{1}{\cc}\left(-\alpha(\lvert \text{log}\,\cc\rvert + \bar{\alpha}) + 2\cc_{ss}(\bar{\alpha} - 1)\right) \\ & = \frac{1}{\cc}\left(-\alpha\lvert \text{log}\,\cc\rvert + \bar{\alpha}(\cc_{ss} - \alpha) + \cc_{ss}\frac{\bar{\alpha}}{2} + \frac{\cc_{ss}}{2}(\bar{\alpha} - 4)\right) \geq \frac{1}{\cc}\left(-\alpha\lvert \text{log}\,\cc\rvert + \cc_{ss}\frac{\bar{\alpha}}{2}\right) \\ & = \frac{1}{2\lvert \text{log}\,\cc\rvert\cc^{2}}\left(-2\alpha\lvert \text{log}\,\cc\rvert^{2}\cc + \bar{\alpha}^{2}\right) > 0,
\end{align*}
\noindent for $\bar{\alpha}$ sufficiently large (i.e. $\cc$ small enough). The case of $\bb_{ss}\bb\,\text{log}\,\bb$ does not require modifications. 
\end{proof}
\subsection{Curvature estimates in $\Gin$.} We consider $(\R^{4},g(t))_{0\leq t < T}$ a maximal complete, bounded curvature Ricci flow solution evolving from some $g_{0}\in\Gin$. If the solution develops a finite-time singularity at some $T < \infty$, then we can apply \cite[Theorem 1.1]{pseudolocalityapplication} and conclude that there exists $\rho > 0$ such that 
\begin{equation}\label{definitionradiusrho}
\sup_{\left(\R^{4}\setminus B(\origin,\rho)\right)\times [0,T)}\left \vert \text{Rm}_{g(t)}\right \vert_{g(t)}\leq 1.
\end{equation}
\begin{remark}
We note that the set $\Gin$ is preserved along the Ricci flow. Consider the maximal Ricci flow evolving from some $g_{0}\in\Gin$. By Lemma \ref{cruciallemma} condition (i) in Definition \ref{definitionGin} persists in time. From \cite{formationsingularities} we also derive that $\lvert\text{Rm}\rvert(s,t)\rightarrow 0$ as $s\rightarrow \infty$ for all $t\in [0,T)$. If $T<\infty$, then given $\rho$ as in \eqref{definitionradiusrho} we can find $\mu > 0$ such that $\cc(\cdot,t) \geq \mu > 0$ in $\R^{4}\setminus B(\origin,\rho)\times [0,T)$. If instead $T=\infty$, then $\cc$ is uniformly bounded from below away from the origin in any compact interval of existence being the curvature bounded. 
\end{remark}
\noindent An immediate consequence of \eqref{definitionradiusrho} is that for any radial coordinate $x_{1} > \rho$ the spatial derivatives (up to second order) of $\bb$ and $\cc$ are uniformly bounded in time along the hypersphere of radius $x_{1}$. In particular, for any $x_{1} > \rho$ there exists $\varepsilon = \varepsilon(x_{1}) > 0$ satisfying
\begin{equation}\label{gammadefinition}
\inf_{p\in B(\origin,x_{1})}\frac{\cc}{\bb}(p,t) \geq \varepsilon > 0,
\end{equation}
\noindent for any $t\in [0,T)$. For the function $\cc/\bb$ is uniformly bounded from below at the origin and along the hypersphere of radius $x_{1}$ and the evolution equation \eqref{evolutionlogcb} prevents $\cc/\bb$ from attaining interior minima approaching zero.   
\\By inspection one can check that given $x_{1} > \rho$ any bound derived in Subsection 4.1 extends to the space-time region $B(\origin,x_{1})\times [0,T)$. For any argument relies on a maximum principle which still applies to this setting once we know that any relevant quantity is uniformly bounded along the parabolic boundary of the region $B(\origin,x_{1})\times [0,T)$. Explicitly, we have the following
\begin{lemma}\label{estimatesforGin}
Let $(\R^{4},g(t))_{0\leq t < T}$, with $T < \infty$, be the maximal Ricci flow solution evolving from some $g_{0}\in\Gin$ and let $\rho > 0$ satisfy
\[
\sup_{\left(\R^{4}\setminus B(\origin,\rho)\right)\times [0,T)}\left \vert \emph{Rm}_{g(t)}\right \vert_{g(t)}\leq 1.
\]
\noindent Then for any $x_{1} > \rho$ there exists $\alpha = \alpha(x_{1}) > 0$ such that 
\[
\sup_{B(\origin,x_{1})\times [0,T)}\bb^{2}\lvert \emph{Rm}_{g(t)}\rvert_{g(t)}\leq \alpha, 
\]
\noindent and
\[
\sup_{B(\origin,x_{1})\times [0,T)}\left(\frac{1}{\bb}\left(\frac{\bb}{\cc} -1 \right) + \left \vert\frac{\cc_{s}}{\cc} -\frac{\bb_{s}}{\bb} \right\vert + \bb\lvert k_{01} - k_{03} \rvert\right) \leq \alpha.
\]
\end{lemma}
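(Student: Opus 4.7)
The plan is to rerun the maximum principle arguments of Subsection 4.1 on the parabolic region $B(\origin,x_{1})\times [0,T)$ in place of all of $\R^{4}\times [0,T)$. In the $\G$-setting those arguments exploited two global facts: the uniform upper bound on $\bb$ from Lemma \ref{finite-timesingularity}, and the uniform lower bound on $\cc/\bb$ from Lemma \ref{consistencyRFassumption1}. In the $\Gin$-setting neither of these survives globally, but the pseudolocality bound \eqref{definitionradiusrho} and the local estimate \eqref{gammadefinition} allow us to produce substitutes as long as we localise in space.

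First I would assemble the parabolic boundary data for any $x_{1}>\rho$. At the origin the smoothness conditions \eqref{smoothnessorigin} hold at every time, so each of the quantities we need to control — $(\bb_{s}^{2}-4)/\bb$, $1/\bb(\bb/\cc-1)$, $\cc_{s}/\cc-\bb_{s}/\bb$, $\bb\bb_{ss}$, $(\psi_{s}\bb)^{2}$ etc. — is either continuously zero or uniformly bounded there. On the outer boundary $\{x=x_{1}\}$ the bound \eqref{definitionradiusrho}, applied on a slightly larger collar $\{\rho<x<x_{1}+1\}$ together with Shi's local derivative estimates, gives $t$-uniform bounds on $\bb,\cc$ and their spatial derivatives up to second order; to convert curvature bounds into bounds on $\bb,\cc$ themselves I would use that $|\mathrm{Ric}_{g(t)}|\leq C$ on the outer region forces $\bb(x_{1},t)$ and $\cc(x_{1},t)$ to be uniformly comparable to their initial values, and then invert the curvature identities \eqref{sectionalvertical12}--\eqref{sectionalhorizontal03} on $\{x=x_{1}\}$.

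With this parabolic boundary control in hand, the proofs of Lemmas \ref{firstderivativesbounded}, \ref{bssquareminus4}, \ref{rotationalsymmetry}, \ref{rotationalsymmetryorder1}, \ref{bssquaredminus1}, \ref{2ndderivativescontrolleb2}, and \ref{rotationalsymmetry2} all transfer with only cosmetic modifications: each one picks a threshold strictly larger than the sup of the relevant quantity on the parabolic boundary of $B(\origin,x_{1})\times[0,T)$ and shows that at a hypothetical first interior attainment the time derivative has the wrong sign. The algebra in those computations depends only on the ordering $\cc\leq\bb$ and on a uniform lower bound for $\cc/\bb$, both of which are available inside $B(\origin,x_{1})\times[0,T)$ by Lemma \ref{consistencyRFassumption1} and \eqref{gammadefinition}. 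Combining the resulting interior estimates for first and second derivatives exactly as in Corollary \ref{verticalsectionalcontrolled}, Lemma \ref{2ndderivativescontrolleb2} and Lemma \ref{rotationalsymmetry2} delivers both the bound $\bb^{2}|\mathrm{Rm}_{g(t)}|\leq \alpha$ and the bound on $\bb^{-1}(\bb/\cc-1)+|\cc_{s}/\cc-\bb_{s}/\bb|+\bb|k_{01}-k_{03}|$ claimed in the statement.

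The main obstacle, and essentially the only non-cosmetic point, is the outer-boundary verification: that \eqref{definitionradiusrho} together with Shi's estimates and the preserved lower bound $\cc\geq\mu$ on $\R^{4}\setminus B(\origin,\rho)$ really do produce $t$-uniform pointwise bounds on $\bb,\cc,\bb_{s},\cc_{s},\bb_{ss},\cc_{ss}$ along $\{x=x_{1}\}$. Once this step is carried out carefully, every subsequent maximum-principle argument is a direct replay of the corresponding computation from Subsection 4.1, with the boundary control on $\{x=x_{1}\}$ playing exactly the role that decay at spatial infinity played in the $\G$-setting.
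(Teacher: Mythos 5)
Your proposal matches the paper's (unwritten) proof exactly: the paper disposes of this lemma with the single remark that ``any bound derived in Subsection 4.1 extends to the space-time region $B(\origin,x_{1})\times [0,T)$'' because each argument is a maximum principle that applies once the relevant quantity is controlled on the parabolic boundary, which is precisely the strategy you lay out. Your elaboration of the outer-boundary step --- using \eqref{definitionradiusrho}, $T<\infty$, and Shi's local estimates to bound $\bb,\cc$ and their spatial derivatives along $\{x=x_{1}\}$, and \eqref{gammadefinition} together with Lemma~\ref{consistencyRFassumption1} as the local replacement for the global bounds of Lemmas~\ref{finite-timesingularity} and~\ref{consistencyRFassumption1} --- is correct and fills in the detail the paper elides.
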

\begin{remark}
One can verify that Lemma \ref{estimatesforGin} holds for a larger class of Ricci flows than $\Gin$. Indeed, it suffices to control the flow uniformly along the parabolic boundary of some space-time region and then apply maximum principle arguments without relying on the quantities $\bb_{s}$ and $H$ being nonnegative.
\end{remark}
Next, we prove that $\bb_{s}$ and $H$ remain positive along a hypersphere of sufficiently large radius. In the following $\rho$ still denotes the radius defined by \eqref{definitionradiusrho}. 
\begin{lemma}\label{definitionxtilde}
Let $(\R^{4},g(t))_{0\leq t < T}$, with $T < \infty$, be the maximal Ricci flow solution evolving from a warped Berger metric $g_{0}\in\Gin$. There exist $\tilde{x}_{2}\geq \tilde{x_{1}} > \rho$, $\delta > 0$ and $\tilde{t}\in [0,T)$ such that
\[
\bb_{s}(\tilde{x}_{1},t)\geq \delta > 0, \,\,\,\,\,\,\,\, H(\tilde{x}_{2},t) \geq \delta > 0 \,\,\,\,\,\,\,\, \forall t\in [\tilde{t},T).
\] 
\end{lemma}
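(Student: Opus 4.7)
The plan is to establish the positivity of $b_{s}$ and $H$ at suitable fixed radii at the initial time, and then to propagate these bounds up to the singular time by combining pseudolocality-driven exterior regularity with the strict positivity argument of Lemma \ref{cruciallemma}.

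First I would derive initial versions of both inequalities. By the Remark following Definition \ref{definitionGin}, the assumption $g_{0}\in\Gin$ implies $b(s,0)\to\infty$. Combined with $b_{s}(\cdot,0)\geq 0$, the function $b_{s}(\cdot,0)$ cannot vanish identically on $[\rho,\infty)$, so there exist $\tilde{x}_{1}>\rho$ and $\delta_{0}>0$ with $b_{s}(\tilde{x}_{1},0)\geq 2\delta_{0}$. For the mean curvature, the identity $(cb^{2})_{s}=cb^{2}H$ together with $c(\cdot,0)\geq \mu$ on $[s_{0},\infty)$ and $b(\cdot,0)\to\infty$ gives $cb^{2}(\cdot,0)\to\infty$; since $H(\cdot,0)\geq 0$ and $cb^{2}$ is unbounded, $H(\cdot,0)$ cannot vanish on any unbounded subinterval of $[\rho,\infty)$. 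Continuity then yields $\tilde{x}_{2}\geq \tilde{x}_{1}$ with $H(\tilde{x}_{2},0)\geq 2\delta_{0}$ (after possibly shrinking $\delta_{0}$).

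The second step is to produce uniform regularity on an exterior region. By \eqref{definitionradiusrho} we have $|\text{Rm}_{g(t)}|\leq 1$ on $(\R^{4}\setminus B(\origin,\rho))\times [0,T)$. Combining Shi's derivative estimates with the uniform lower bound $c\geq \mu'>0$ there (following from \eqref{gammadefinition}), all of $b,c$ and their spatial derivatives up to any order, as well as their $t$-derivatives, are uniformly bounded on $(\R^{4}\setminus B(\origin,\rho'))\times [0,T)$ for any $\rho<\rho'<\tilde{x}_{1}$. In particular the flow extends smoothly past $t=T$ on this exterior region, and the maps $t\mapsto b_{s}(\tilde{x}_{1},t)$, $t\mapsto H(\tilde{x}_{2},t)$ are uniformly Lipschitz on $[0,T]$, hence extend continuously to $t=T$.

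The final step is a contradiction argument. Suppose $b_{s}(\tilde{x}_{1},T)=0$ (the case of $H$ is analogous, using the coupled system \eqref{evolutioncbbbs}--\eqref{evolutioncH}). Applied to the extended smooth flow on $\R^{4}\setminus B(\origin,\rho')$, a localized version of the system maximum principle used in Lemma \ref{cruciallemma} shows that $(c/b)b_{s}$ and $cH$ remain strictly positive for $t>0$, contradicting the vanishing at $(\tilde{x}_{1},T)$ with $T>0$. The principal obstacle is this last step: Lemma \ref{cruciallemma} was established on all of $\R^{4}$ using the boundary conditions \eqref{smoothnessorigin} at the origin and the barrier $W(x,t)=\exp(s^{2}/(1-\beta t)+\lambda t)$ to rule out loss of positivity from spatial infinity, whereas here the inner parabolic boundary is the compact sphere $\{s=\rho'\}$, on which the uniform regularity from the previous step supplies the required control, while decay at spatial infinity is handled exactly as in the original proof. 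Once the system maximum principle of \cite[Theorem 13, p.190]{protter} is redeployed in this localized setting, combined with the strict positivity result of \cite[Theorem 3, p.38]{friedman}, the argument closes and produces the desired $\tilde{t}\in [0,T)$ and uniform $\delta>0$.
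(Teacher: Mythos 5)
Your proof is correct but follows a genuinely different route from the paper's. The paper uses a quantitative mean-value argument: since $b(x,0)\to\infty$ and the curvature is uniformly bounded outside $B(\origin,\rho)$, one can fix $\rho<x_0<x_1$ so that $b(x_1,t)-b(x_0,t)\geq\varsigma>0$ for all $t\in[0,T)$; the equation \eqref{firstderivativelipschitz} makes $b_s(x,\cdot)$ uniformly Lipschitz in time for each $x>\rho$, and if all the limits $\lim_{t\to T}b_s(x,t)$ were zero on $[x_0,x_1]$ then the uniform bound on $b_{ss}$ would force $\sup_{[x_0,x_1]}b_s(\cdot,t)\to 0$, contradicting the fixed gap $\varsigma$ via the mean value theorem and distance distortion bounds. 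An identical argument applied to $\log(b^2c)$ produces $\tilde{x}_2$. You instead fix $\tilde{x}_1$ where $b_s(\cdot,0)$ is already positive, extend the flow continuously up to $t=T$ on the exterior region using pseudolocality and Shi estimates, and then use the strong maximum principle to push the strict positivity of Lemma \ref{cruciallemma} to the final time. This works, but two remarks: the flow extends smoothly \emph{to} $t=T$ on the exterior, not past it, and that is all you need; and you do not actually need to re-run the Protter--Weinberger system argument with a barrier $W$ --- nonnegativity (indeed strict positivity on $(0,T)$) of $(c/b)b_s$ and $cH$ on the exterior is already furnished by Lemma \ref{cruciallemma} applied globally on $\R^4$, so the only localized ingredient required is the Friedman strong maximum principle on a bounded annulus around $\tilde{x}_1$ at the final time slice, propagating a zero of $(c/b)b_s$ or $cH$ backward and producing a contradiction. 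Thus the inner-boundary issue you single out as the principal obstacle is less delicate than you suggest, while the paper's route avoids the maximum-principle machinery entirely.
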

\begin{proof}
We have already shown that $\bb(x,0)\rightarrow \infty$ as $x\rightarrow\infty$. Since the curvature is uniformly bounded in the complement of the Euclidean ball $B(\origin,\rho)$, we can pick $\rho < x_{0} < x_{1}$ such that
\[
\bb(x_{1},t) - \bb(x_{0},t) \geq \varsigma > 0,
\]
\noindent for some $\varsigma > 0$ and for any $t\in [0,T)$. We can use the Koszul formula to write the evolution equation of $\bb_{s}$ as 
\begin{align}
\partial_{t}(\bb_{s})&= \partial_{s}\left(-\text{Ric}_{g(t)}\left(\frac{X_{1}}{\bb},\frac{X_{1}}{\bb}\right)\bb\right) + \text{Ric}_{g(t)}(\partial_{s},\partial_{s})\bb_{s} \notag \\ &= -\text{Ric}_{g(t)}\left(\frac{X_{1}}{\bb},\frac{X_{1}}{\bb}\right)\bb_{s} -\nabla_{g(t)}\text{Ric}_{g(t)}\left(\partial_{s},\frac{X_{1}}{\bb},\frac{X_{1}}{\bb}\right)\bb + \text{Ric}_{g(t)}(\partial_{s},\partial_{s})\bb_{s}. \label{firstderivativelipschitz}
\end{align}
\noindent Therefore given $x > \rho$, by \eqref{definitionradiusrho} and Shi's derivative estimates there exists $\alpha(x) > 0$ such that $\lvert(\bb_{s})_{t}(x,t)\rvert \leq \alpha$ uniformly in time. Since $T < \infty$ the last property implies that $\bb_{s}(x,\cdot)$ is Lipschitz and hence admits a finite limit as $t\nearrow T$, which we know to be nonnegative according to Lemma \ref{cruciallemma}. Let us assume for a contradiction that any such limit is zero. Since $\bb_{ss}$ is bounded in the annular region $(x_{0},x_{1})\times S^{3}$ uniformly in time, we deduce that $\sup_{[x_{0},x_{1}]}\bb_{s}(\cdot,t)\rightarrow 0$ as $t\nearrow T$. On the other hand, being the curvature controlled in the annular region $(x_{0},x_{1})\times S^{3}$, we get
\begin{align*}
\varsigma &\leq \bb(x_{1},t) - \bb(x_{0},t) \leq \sup_{[x_{0},x_{1}]}\bb_{s}(\cdot,t)(s(x_{1},t)-s(x_{0},t))\\ &\leq \alpha\,\sup_{[x_{0},x_{1}]}\bb_{s}(\cdot,t)(s(x_{1},0)-s(x_{0},0))\leq \alpha\,\sup_{[x_{0},x_{1}]}\bb_{s}(\cdot,t),
\end{align*}
\noindent for any $t\in[0,T)$, which gives a contradiction. Therefore, there exists $\tilde{x_{1}}\in[x_{0},x_{1}]$ as in the statement. The proof for $H$ is similar. Indeed we can write $H = (\log(\bb^{2}\cc))_{s}$ and then adapt the argument above noting that by Definition \ref{definitionGin} $\log(\bb^{2}\cc))(x,0)\rightarrow \infty$ as $x\rightarrow \infty$. In particular we can always pick $\tilde{x}_{2}\geq \tilde{x}_{1}$. 
\end{proof}
Next we show that $\cc H$ stays away from zero in the compact region $B(\origin,\rho)$ for times close to the maximal time of existence $T$. In the following we let $\tilde{x}_{2}$ and $\tilde{t}$ be defined as in the previous Lemma. 
\begin{corollaryy}\label{lowerboundH}
There exists $\mu > 0$ such that $\cc H \geq \mu$ in $B(\origin,\tilde{x}_{2})\times [\tilde{t},T)$. 
\end{corollaryy}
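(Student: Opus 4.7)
The plan is to run a maximum principle argument on $\cc H$ over the region $\overline{B(\origin,\tilde{x}_{2})}\times[\tilde{t},T)$ using the evolution equation \eqref{evolutioncH}, with the boundary behaviour of $\cc H$ at the singular orbit playing the role of a pinning condition.

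First I will pin down a strictly positive lower bound on the parabolic data. At the initial slice $t=\tilde{t}$, Lemma \ref{cruciallemma} gives $\cc H>0$ on $B(\origin,\tilde{x}_{2})\setminus\{\origin\}$, while by the boundary conditions \eqref{smoothnessorigin} the function $\cc H$ extends continuously to the value $3$ at the origin for any $t$ as long as the solution is smooth; compactness then furnishes $\mu_{1}:=\min_{\overline{B(\origin,\tilde{x}_{2})}}\cc H(\cdot,\tilde{t})>0$. On the lateral boundary $\{\tilde{x}_{2}\}\times[\tilde{t},T)$, Lemma \ref{definitionxtilde} provides $H(\tilde{x}_{2},t)\geq\delta$, and \eqref{definitionradiusrho}, applied on $\R^{4}\setminus B(\origin,\rho)$ together with $T<\infty$, yields $\cc(\tilde{x}_{2},t)\geq\mu_{2}>0$ uniformly in $t$, since the distortion of the metric on this hypersphere is controlled by $e^{CT}$. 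Hence $\cc H\geq\delta\mu_{2}$ on the lateral boundary and $\cc H\equiv 3$ at the origin.

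Next I will localize any interior minimum of $\cc H$ below the threshold $\mu_{*}:=\min(\mu_{1},\delta\mu_{2},2)$ away from the origin. Since $\cc H(\origin,t)\equiv 3$ and $(\cc H)_{s}$ is uniformly bounded on $B(\origin,\tilde{x}_{2})\times[\tilde{t},T)$ by Shi's derivative estimates combined with the scale-invariant bounds in Lemma \ref{estimatesforGin}, there exists $s_{0}>0$, depending only on $g_{0}$, such that $\{\cc H<\mu_{*}\}\subset\{s\geq s_{0}\}$; using $\bb_{s}\geq 0$ from Lemma \ref{cruciallemma}, $\bb$ is then uniformly bounded below on this sublevel set by some $\bb_{*}>0$. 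With the localization in hand, suppose $\nu(t):=\min_{\overline{B(\origin,\tilde{x}_{2})}}\cc H(\cdot,t)$ drops below $\mu_{*}$ for the first time at a point $(p_{0},t_{0})$; then $p_{0}$ is an interior point with $\bb(p_{0},t_{0})\geq\bb_{*}$, and $(\cc H)_{s}=0$, $(\cc H)_{ss}\geq 0$ there. The coupling term $(16/\bb^{2})(\cc\bb_{s}/\bb)(1-\cc^{2}/\bb^{2})$ in \eqref{evolutioncH} is nonnegative by Lemma \ref{cruciallemma} and $\cc\leq\bb$, and Lemmas \ref{rotationalsymmetry} and \ref{bssquaredminus1} (available in this setting via Lemma \ref{estimatesforGin}) yield $\cc^{2}/\bb^{2}-\bb_{s}^{2}\geq -C\bb$, so
\[
\frac{2}{\bb^{2}}\left(\frac{\cc^{2}}{\bb^{2}}-\bb_{s}^{2}\right)\geq -\frac{C}{\bb}\geq -\frac{C}{\bb_{*}}
\]
at $(p_{0},t_{0})$. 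An ODE comparison for $\nu$ then gives $\nu(t)\geq\mu_{1}e^{-(C/\bb_{*})(T-\tilde{t})}>0$ on $[\tilde{t},T)$, producing the desired $\mu$.

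The main obstacle is the localization step, since the potential $V=2(\cc^{2}/\bb^{2}-\bb_{s}^{2})/\bb^{2}$ driving the evolution of $\cc H$ is not a priori uniformly controlled as $\bb\to 0$ and hence standard maximum principle arguments cannot close directly. This is resolved by the rigid boundary value $\cc H(\origin,t)\equiv 3$ from \eqref{smoothnessorigin}, which forces any substantial drop of $\cc H$ to occur at points bounded away from the singular orbit, together with the uniform control of $(\cc H)_{s}$ coming from Shi-type derivative estimates in the region where $\bb^{2}\lvert\text{Rm}\rvert$ is bounded through Lemma \ref{estimatesforGin}.
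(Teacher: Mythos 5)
Your overall setup---parabolic boundary data, a maximum principle at the first interior minimum, then an ODE comparison for $\nu(t)=\min\cc H(\cdot,t)$---is the same strategy the paper uses, and your bookkeeping of the boundary data, the nonnegativity of the coupling term $\tfrac{16}{\bb^{2}}(\tfrac{\cc}{\bb}\bb_{s})(1-\tfrac{\cc^{2}}{\bb^{2}})$, and the pointwise bound $\tfrac{\cc^{2}}{\bb^{2}}-\bb_{s}^{2}\geq -C\bb$ are all correct. The gap is in the localization step, which is exactly the heart of the matter.

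You assert that $(\cc H)_{s}$ is uniformly bounded on $B(\origin,\tilde{x}_{2})\times[\tilde{t},T)$ and conclude first that $\{\cc H<\mu_{*}\}\subset\{s\geq s_{0}\}$ and then, via $\bb_{s}\geq 0$, that $\bb\geq\bb_{*}>0$ on this sublevel set. Neither step holds. Differentiating $\cc H=2\cc\bb_{s}/\bb+\cc_{s}$ gives
\[
(\cc H)_{s}=2\frac{\bb_{s}\cc}{\bb}\left(\frac{\cc_{s}}{\cc}-\frac{\bb_{s}}{\bb}\right)-\cc\left(2k_{01}+k_{03}\right),
\]
and while the first term is uniformly bounded by Lemma \ref{estimatesforGin}, the second is only bounded by $\alpha/\bb$, since the scale-invariant estimate $\bb^{2}\lvert\text{Rm}\rvert\leq\alpha$ yields $\cc\lvert k_{0i}\rvert\leq\alpha\cc/\bb^{2}\leq\alpha/\bb$ and nothing sharper. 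More fundamentally, in the hypothetical situation $T<\infty$ the estimate $\bb^{2}\lvert\text{Rm}\rvert\leq\alpha$ forces $\bb\to 0$ somewhere inside $B(\origin,\tilde{x}_{2})$, and by monotonicity $\bb_{s}\geq 0$ the whole ball $\{s\leq s_{0}\}$ collapses: $\bb(s_{0},t)\to 0$ as $t\nearrow T$ for $s_{0}$ below the singular radius. So no uniform $\bb_{*}>0$ exists, and your ODE coefficient $-C/\bb_{*}$ is not a legitimate lower bound for the potential. This is fatal because the regime where the corollary has content is precisely $\bb\to 0$.

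The paper's proof dispenses with any localization of the minimum. The missing idea is that $\bb_{s}$ itself is controlled by $\cc H$ and $\bb$: from $H=2\bb_{s}/\bb+\cc_{s}/\cc$ together with Lemma \ref{rotationalsymmetryorder1} (uniform bound on $\cc_{s}/\cc-\bb_{s}/\bb$) one gets $\bb_{s}\leq\tfrac{1}{3}\bigl(\tfrac{\bb}{\cc}\cc H+\alpha\bb\bigr)$, hence $\bb_{s}^{2}\leq\alpha\,\cc H(1+\cc H)+\alpha\bb^{2}$. Plugging this into the potential term at the minimum and using $\cc/\bb\geq\varepsilon$ from \eqref{gammadefinition} gives
\[
(\cc H)_{t}(p_{0},t_{0})\geq\frac{2\cc H}{\bb^{2}}\left(\varepsilon^{2}-\alpha\,\cc H(1+\cc H)-\alpha\bb^{2}\right)\geq -2\alpha\,\cc H(p_{0},t_{0})
\]
whenever $\cc H(p_{0},t_{0})$ is below a fixed threshold, \emph{uniformly in $\bb$}. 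Your inequality $\tfrac{\cc^{2}}{\bb^{2}}-\bb_{s}^{2}\geq -C\bb$ is true but too weak: it never exploits the smallness of $\cc H$ itself, and after dividing by $\bb^{2}$ the factor $\bb^{-1}$ is exactly the unboundedness you were trying to localize away.
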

\begin{proof}
By the boundary conditions we have $\cc H(\origin,t) = 3$ as long as the solution exists. According to Lemma \ref{cruciallemma} and Lemma \ref{definitionxtilde} there exists $\delta > 0$ such that $\cc H(\tilde{x}_{2},t)\geq \delta$ for any $t\in [\tilde{t},T)$ and $\cc H(x,\tilde{t})\geq \delta$ for any $0\leq x\leq \tilde{x}_{2}$. Suppose that $cH$ gets smaller than $\min\{\delta,3\}$ in $B(\origin,\tilde{x}_{2})\times [\tilde{t},T)$. Then there exists a minimum point $(p_{0},t_{0})$ and from \eqref{evolutioncH}, \eqref{gammadefinition} and Lemma \ref{cruciallemma} we get
\[
(\cc H)_{t}(p_{0},t_{0}) \geq 2\frac{\cc H}{\bb^{2}}\left( \frac{\cc^{2}}{\bb^{2}} - \bb_{s}^{2}\right)(p_{0},t_{0}) \geq 2\frac{\cc H}{\bb^{2}}\left( \varepsilon^{2} - \bb_{s}^{2}\right)(p_{0},t_{0}).
\]
\noindent From Lemma \ref{rotationalsymmetryorder1} it follows that
\[
\bb_{s} = \frac{1}{2}\left(\frac{\bb}{\cc}\cc H - \frac{\bb}{\cc}\cc_{s}\right) \leq \frac{1}{2}\left(\frac{\bb}{\cc}\cc H - \bb_{s} + \alpha\bb\right),
\]
\noindent for some $\alpha = \alpha(\tilde{x}_{2}) > 0$. Thus we can find a uniform constant $\alpha$ only depending on $\tilde{x}_{2}$ such that
\[
(\cc H)_{t}(p_{0},t_{0}) \geq \frac{2\cc H}{\bb^{2}}\left(\varepsilon^{2} - \alpha\cc H(1 + \cc H) -\alpha\bb^{2}\right)(p_{0},t_{0}). 
\]
\noindent Therefore, whenever $\cc H\leq \tilde{\delta}$, for some $\tilde{\delta}$ only depending on $\tilde{x}_{2}$ and $\tilde{t}$, the function $t\mapsto \min_{B(\origin,\tilde{x}_{2})}(\cc H)(\cdot,t)$ is Lipschitz and satisfies
\[
\frac{d(\cc H)_{\text{min}}}{dt} \geq -\alpha (\cc H)_{\text{min}}.
\]
\noindent We conclude that $\cc H$ cannot approach zero in the interior of $B(\origin,\tilde{x}_{2})$ as $t
\nearrow T$.
\end{proof} 
\section{Singularity models of warped Berger Ricci flows}
In this section we perform a blow-up analysis of warped Berger Ricci flows. We first recall the following general notion.
\begin{definitionn}\label{definitonsingularitymodels}
A complete bounded curvature ancient solution to the Ricci flow $(M_{\infty},g_{\infty}(t))_{-\infty < t \leq 0}$ is a \emph{singularity model} for a warped Berger Ricci flow $(\R^{4},g(t))_{0\leq t < T}$ if $ T < \infty$ and there exists a sequence of space-time points $(p_{j},t_{j})$ with $t_{j}\nearrow T$ such that $\lambda_{j}\doteq \lvert\text{Rm}_{g(t_{j})}\rvert_{g(t_{j})}(p_{j})\rightarrow \infty$ and the rescaled Ricci flows $(\R^{4},g_{j}(t),p_{j})$ defined by
\[
g_{j}(t)\doteq \lambda_{j}g\left(t_{j}+\frac{t}{\lambda_{j}}\right)
\] 
\noindent converge to $(M_{\infty},g_{\infty}(t),p_{\infty})$ in the pointed Cheeger-Gromov sense for $t\in(-\infty,0]$. 
\end{definitionn}
\begin{remark}
We note that by the Cheeger-Gromov convergence any singularity model $(M_{\infty},g_{\infty}(t))$ of a warped Berger Ricci flow is \emph{non-compact} and \emph{non-flat}.
\end{remark}
The main goal of this section consists in classifying the singularity models of warped Berger Ricci flows. Namely, we show the following result
\begin{proposition}\label{classificationsingularity}
Let $(\R^{4},g(t))_{0\leq t < T}$, with $T < \infty$, be the maximal Ricci flow solution evolving from a warped Berger metric $g_{0}$ belonging to either $\G$ or $\Gin$. Then any singularity model is either the self-similar shrinking soliton on the cylinder or a positively curved rotationally symmetric $\kappa$-solution. 
\end{proposition}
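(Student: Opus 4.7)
The plan is to extract a smooth pointed limit of the blow-up sequence, push forward the left-invariant $\mathrm{SU}(2)$ Killing fields, upgrade the symmetry to full rotational symmetry using the scale-invariant estimates from Section 4, and then apply Zhang's rigidity.

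\textbf{Step 1 (Compactness).} Given a sequence $(p_j,t_j)$ with $\lambda_j \doteq \lvert \text{Rm}_{g(t_j)}\rvert_{g(t_j)}(p_j) \to \infty$, I would set $g_j(t) \doteq \lambda_j\,g(t_j + t/\lambda_j)$ and invoke Hamilton's compactness theorem. The scale-invariant bound $\bb^2 \lvert \text{Rm}_g\rvert_g \leq \alpha$ (globally for $\G$; and locally for $\Gin$ by Lemma \ref{estimatesforGin}) controls $\lvert\text{Rm}_{g_j}\rvert_{g_j}$ on any $g_j$-ball where $\bb_j = \sqrt{\lambda_j}\,\bb$ stays bounded; Lemma \ref{firstderivativesbounded} together with $(\bb_j)_{s_j}=\bb_s$ gives uniform Lipschitz control on $\bb_j$ in $g_j$, so $\bb_j$ remains comparable to $\bb_j(p_j,0)=\sqrt{\lambda_j}\,\bb(p_j,t_j) \leq \sqrt{\alpha}$ on unit $g_j$-balls. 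Non-collapsing follows from Perelman's $\kappa$-non-collapsing theorem on $[0,T)$ in the $\G$ case, and from the pseudolocality estimate \eqref{definitionradiusrho} combined with the positive injectivity radius in the $\Gin$ case.

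\textbf{Step 2 ($\mathfrak{su}(2)$ acts by Killing fields on the limit).} The left-invariant frame $\{X_i\}$ consists of $g(t)$-Killing fields with $|X_1|_g=|X_2|_g=\bb$ and $|X_3|_g=\cc$. After rescaling, $|X_i|_{g_j(0)}(p_j)=\sqrt{\lambda_j}\,\bb(p_j,t_j)$ (and similarly with $\cc$) is bounded by $\sqrt{\alpha}$ thanks to $\bb^2\lvert\text{Rm}\rvert \leq \alpha$ together with $\varepsilon \bb \leq \cc \leq \bb$. Since the Killing identity $\nabla^2 X_i = -\text{Rm}(\cdot,X_i)\,\cdot\,$ combined with Shi's derivative estimates supplies $C^k_{\mathrm{loc}}$-bounds uniformly in $j$, the pullbacks of $X_i$ under the diffeomorphisms realizing the Cheeger-Gromov convergence subconverge to vector fields $X_i^\infty$ that are $g_\infty(t)$-Killing for every $t$ and satisfy $[X_i^\infty, X_j^\infty]=2\epsilon_{ijk}X_k^\infty$, yielding an $\mathfrak{su}(2)$-subalgebra of Killing fields on $(M_\infty, g_\infty(t))$.

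\textbf{Step 3 (Rotational symmetry and classification).} Since $\lambda_j \bb^2(p_j,t_j) \leq \alpha$ and $\lambda_j \to \infty$, we have $\bb(p_j,t_j) \to 0$, so Lemma \ref{rotationalsymmetry} forces $\bb/\cc \to 1$ at $(p_j,t_j)$. Likewise Lemma \ref{rotationalsymmetry2} gives $\lvert k_{01}-k_{03}\rvert_{g_j}/\lambda_j \leq \alpha/(\sqrt{\lambda_j}\,\bb_j) \to 0$, and Lemma \ref{rotationalsymmetryorder1} controls $\cc_s/\cc - \bb_s/\bb$ analogously. Passing to the limit, $|X_1^\infty|_{g_\infty}=|X_2^\infty|_{g_\infty}=|X_3^\infty|_{g_\infty}$ and $k_{01}^\infty=k_{03}^\infty$, so each principal $\mathrm{SU}(2)$-orbit of $g_\infty(t)$ is round; together with the $\mathfrak{su}(2)$-action this enlarges the Killing algebra to contain $\mathfrak{so}(4)$, making $(M_\infty,g_\infty(t))$ a complete, $\kappa$-non-collapsed, bounded-curvature, rotationally symmetric ancient Ricci flow. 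Zhang's rigidity result in \cite{zhang} then forces $(M_\infty,g_\infty(t))$ to be either the self-similar shrinking cylinder on $\mathbb{R}\times S^3$ or a positively curved rotationally symmetric $\kappa$-solution on $\mathbb{R}^4$.

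\textbf{Main obstacle.} The delicate part is Step 2: one must arrange the diffeomorphisms realizing the Cheeger-Gromov convergence compatibly with the $\mathrm{SU}(2)$-structure so that the $X_i^\infty$ are genuinely independent, and identify which orbit type occurs in the limit, i.e.\ whether $p_j$ lies on a principal $\mathrm{SU}(2)$-orbit (possibly giving $M_\infty \cong \mathbb{R}\times S^3$) or approaches the exceptional orbit $\origin$ (possibly giving $M_\infty \cong \mathbb{R}^4$); this dichotomy is what ultimately separates the cylindrical model from the rotationally symmetric $\kappa$-solution in the classification.
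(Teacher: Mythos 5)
Your Step~2 rests on a claim that is false for a genuine Berger metric: the left-invariant vector fields $X_1, X_2$ are \emph{not} Killing for $g(t)$. Only $X_3$ (generating Hopf-fibre rotations, since $a=b$) is a $g(t)$-Killing field among the Milnor frame; the SU(2)-isometries are generated by the \emph{right}-invariant vector fields, which are a different collection. Indeed, if $X_1,X_2,X_3$ were all Killing, the restricted metric on each orbit would be bi-invariant, hence round, so $g(t)$ would already be rotationally symmetric and the whole proposition would be vacuous. The entire content of Section~5 of the paper is precisely that $X_1, X_2$ \emph{become} Killing only in the rescaled limit --- this is the mechanism of symmetry enhancement --- and your proposal assumes that conclusion at the outset, making the argument circular. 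The Koszul formula shows $\mathcal{L}_{X_1}g$ restricted to the orbit directions is proportional to $b^2 - c^2$, which is nonzero for a non-round Berger sphere.

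This misconception propagates into two concrete gaps. First, you invoke the Killing identity $\nabla^2 X_i = -\mathrm{Rm}(\cdot,X_i)\cdot$ to bootstrap $C^k_{\mathrm{loc}}$-bounds; since $X_1,X_2$ are not Killing, this identity does not hold for them, and the error term is exactly the obstruction to rotational symmetry. The paper instead derives the needed $C^3$-bounds on $\nabla^k X_i$ directly via the Koszul formula and the curvature estimates of Section~4 (Lemma~\ref{C3bounds}), which does not presuppose any Killing structure. Second, and more seriously, you give no argument that the limits $X_{i,\infty}$ are Killing for $g_\infty(t)$. The paper's Lemma~\ref{Yi'sKilling} establishes this by showing that the rescaled deviation from Killing at a point $\Phi_j(q)$ is controlled by $\lambda_j b^2(1-c^2/b^2)$, which by the scale-breaking estimate of Lemma~\ref{rotationalsymmetry} is $\lesssim \lambda_j b^3 \to 0$; this in turn requires first establishing $b(\Phi_j(q),\cdot)\to 0$ for \emph{every} limit point $q$ (Lemma~\ref{bphihgoesto0true}), and also simple connectivity of $M_\infty$ (Lemma~\ref{exhaustionlemma}) to extend the local Killing fields globally via Nomizu's theorem. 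None of this appears in your outline. Your Step~3 then uses the same scale-breaking estimates correctly --- which is internally inconsistent with Step~2's assertion that the symmetry is already present. A correct proof must drop the Killing-identity shortcut, establish $C^3$-bounds by direct computation, prove that $b\to 0$ at every limit point (ruling out Ricci-flat limits), and deduce the limit Killing property from the vanishing of the rescaled Lie derivative. Your Step~1 and the final appeal to Zhang's result (which gives nonnegative curvature operator, after which the cylinder/positively-curved dichotomy follows from the strong maximum principle) are essentially in line with the paper.
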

We prove the characterization of singularity models by showing that the symmetries of warped Berger Ricci flows are enhanced when dilating the flow around a singularity. More precisely, we show that the left-invariant vector fields in \eqref{leftinvariantframe} become Killing vectors when passing to the limit hence forcing the singularity model to be rotationally symmetric.
\\Given $g_{0}\in \G$ there exists $\varepsilon > 0$ such that $\cc/\bb(\cdot,0)\geq \varepsilon$. Therefore $g_{0}$ is bounded between two round cylinders outside some compact region and there exists $\alpha > 0$ such that $\text{Vol}_{g_{0}}(B_{g_{0}}(p,1))\geq \alpha$ for any $p\in\mathbb{R}^{4}$. The latter condition is satisfied by any $g_{0}\in\Gin$ being the injectivity radius positive and the curvature bounded. Thus if $(\R^{4},g(t))_{0\leq t < T}$, with $T < \infty$, is the maximal Ricci flow solution evolving from some $g_{0}$ which belongs to either $\G$ or $\Gin$, then by \cite[Theorem 8.26]{ricciflowtechniques1} there exists $\kappa > 0$ such that $g(t)$ is (weakly) $\kappa-$non-collapsed in $\mathbb{R}^{4}\times (T/2,T)$ at any scale $r\in (0,\sqrt{T/2})$. Accordingly, there exist blow-up sequences satisfying Definition \ref{definitonsingularitymodels} and hence any warped Berger Ricci flow evolving from either $\G$ or $\Gin$ admits singularity models (\cite[Section 16]{formationsingularities}). In particular, any singularity model of a warped Berger Ricci flow is (weakly) $\kappa$-non-collapsed at all scales.
\\We first consider a maximal Ricci flow solution $(\R^{4},g(t))_{0\leq t < T}$, with $T < \infty$, starting at some warped Berger metric $g_{0}\in\G$. Later we check that the same conclusions are satisfied by Ricci flows in $\Gin$.
We let $(p_{j},t_{j})$ be a blow-up sequence of space-time points giving rise to a singularity model $(M_{\infty},g_{\infty}(t),p_{\infty})$ as in Definition \ref{definitonsingularitymodels} and we denote the rescaling factors $\lvert \text{Rm}_{g(t_{j})}\rvert_{g(t_{j})}(p_{j})$ by $\lambda_{j}$. Due to the SU(2)-symmetry we may fix $\bar{\theta}\in S^{3}$ and we may set $p_{j} = (x_{j},\bar{\theta})$.
We also let $(\Phi_{j})$ be the diffeomorphisms given by the Cheeger-Gromov-Hamilton convergence (see \cite[Chapter 4]{ricciflowtechniques1}). 
\\We first provide a topological characterization of the limit manifold. 
\begin{lemma}\label{exhaustionlemma}
Let $(M_{\infty},g_{\infty}(t),p_{\infty})_{-\infty < t \leq 0}$ be a singularity model for a Ricci flow solution $(\R^{4},g(t))_{0\leq t < T}$ starting at some $g_{0}\in\G$. Then $\pi_{1}(M_{\infty}) = 0.$  
\end{lemma}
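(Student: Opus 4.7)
The plan is to show that every continuous loop $\gamma: S^{1} \to M_{\infty}$ based at $p_{\infty}$ is null-homotopic, by transferring the problem to $\R^{4}$ via the Cheeger-Gromov diffeomorphisms $\Phi_{j}: U_{j} \to V_{j} \subset \R^{4}$ (with $\{U_{j}\}$ exhausting $M_{\infty}$) and exploiting that $\pi_{1}(\R^{4}) = 0$. For $j$ large the compact image $\gamma(S^{1})$ is contained in $U_{j}$, so $\gamma_{j} \doteq \Phi_{j} \circ \gamma$ is a loop in the open set $V_{j} \subset \R^{4}$; since $\R^{4}$ is simply connected, $\gamma_{j}$ bounds a smooth immersed disk $D_{j} \subset \R^{4}$. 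If such a disk can be chosen to lie entirely inside $V_{j}$, then $\Phi_{j}^{-1}(D_{j}) \subset U_{j} \subset M_{\infty}$ is a disk in $M_{\infty}$ bounding $\gamma$, yielding the desired null-homotopy.

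The key step is therefore to guarantee $D_{j} \subset V_{j}$. The plan is to exploit the SU(2)-invariance of the rescaled ambient metric $g_{j}(0) = \lambda_{j} g(t_{j})$: the SU(2)-orbits on $\R^{4}$ are Euclidean 3-spheres together with the singular orbit at $\origin$, so any connected SU(2)-invariant open subset of $\R^{4}$ is either an open neighborhood of $\origin$ and hence contractible (homeomorphic to an open 4-ball), or is contained in $\R^{4}\setminus\{\origin\}$ and therefore of the form $S^{3} \times I$ for some open interval $I \subset (0,\infty)$; in both cases such a set is simply connected. Consequently, any loop contained in an SU(2)-invariant tube in $\R^{4}$ bounds a disk in that tube. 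I would then argue that for $j$ sufficiently large $V_{j}$ contains an SU(2)-invariant tube $T_{j}$ engulfing $\gamma_{j}$, combining the $\kappa$-non-collapsing of $g_{j}(0)$ at $p_{j}$ (giving a uniform $g_{j}(0)$-injectivity-radius lower bound) with the Hopf-fibre control $\varepsilon \leq \cc/\bb \leq 1$ from Lemma \ref{consistencyRFassumption1} to ensure that the geodesic ball $V_{j}$ actually encloses a full SU(2)-tube of comparable radial width.

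The main obstacle I anticipate is the potential lack of equivariance of the Cheeger-Gromov diffeomorphisms $\Phi_{j}$, which prevents $V_{j}$ from being SU(2)-invariant itself and thereby complicates the identification of an SU(2)-tube inside $V_{j}$. I would overcome this by a bootstrap with the remainder of Section 5, which establishes convergence of the Milnor frame $\{X_{1}, X_{2}, X_{3}\}$ to a triple of Killing fields on $M_{\infty}$ generating $\mathfrak{su}(2)$: using these limiting symmetries, $\Phi_{j}$ may be modified so as to be asymptotically SU(2)-equivariant, making the discrepancy between $V_{j}$ and its SU(2)-saturation vanish in the Cheeger-Gromov limit and allowing the preceding contraction argument to be carried out cleanly within $V_{j}$ for all $j$ large.
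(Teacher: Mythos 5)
Your overall scheme (push a loop $\gamma$ down to $\R^{4}$ via $\Phi_{j}$, engulf $\gamma_{j}$ in an SU(2)-invariant tube $T_{j}$, contract $\gamma_{j}$ inside $T_{j}$ using the simple connectivity of such tubes, and pull the homotopy back to $M_{\infty}$) is the right strategy and, I believe, essentially what the cited \cite[Lemma 4.1]{work} does, with the paper flagging \eqref{normsquaredcurvatureboundedbyc} as what makes the SU(2)-adaptation work. However, there are two genuine problems in your execution. First, the ingredients you invoke for the engulfment step do not supply what is actually needed, namely an \emph{upper} bound on the $g_{j}(0)$-diameter of the SU(2)-orbits passing through $\gamma_{j}$: $\kappa$-non-collapsing gives a volume (hence injectivity-radius) \emph{lower} bound, and $\varepsilon\le \cc/\bb\le 1$ only controls the shape, not the size, of the Berger fibres. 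The correct source of the orbit-diameter bound is precisely \eqref{normsquaredcurvatureboundedbyc} via the $k=0$ case of Lemma \ref{C3bounds}: it gives $\sup_{B_{g_{j}(0)}(p_{j},\nu)}\sqrt{\lambda_{j}}\,\bb\le f(0,\nu)$, and since $\cc\le\bb$ the orbit through any point of $B_{g_{j}(0)}(p_{j},\nu)$ has $g_{j}(0)$-diameter $O(f(0,\nu))$. With this, the SU(2)-saturation of $\gamma_{j}$ sits in $B_{g_{j}(0)}(p_{j},\nu')$ for a fixed $\nu'$ independent of $j$, which lies in $V_{j}$ for $j$ large because the $U_{j}$ exhaust $M_{\infty}$; this is the whole argument.

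Second, the obstacle you identify — the non-equivariance of $\Phi_{j}$ — is a red herring, and the fix you propose is circular. The tube $T_{j}$ is built \emph{intrinsically in $\R^{4}$} as an SU(2)-saturated set; it does not need to be the image of an SU(2)-invariant set in $M_{\infty}$, nor does $\Phi_{j}$ need to interact with the group action at all. All that is required is $T_{j}\subset V_{j}$, which is the size estimate above. Moreover, bootstrapping with the Killing-field convergence of Lemma \ref{Yi'sKilling} is not available here: the global extension of $X_{1,\infty}$ to a Killing field on all of $M_{\infty}$ invokes \cite{nomizu}, which in the paper is applied \emph{because} $M_{\infty}$ is simply connected — i.e.\ it relies on the very lemma you are proving. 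Dropping the equivariance detour and replacing your $\kappa$-non-collapsing/Hopf-fibre input with the bound \eqref{normsquaredcurvatureboundedbyc} (through Lemma \ref{C3bounds}) closes the gap.
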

\begin{proof} The proof follows from adapting the argument in \cite[Lemma 4.1]{work} which extends to the SU(2)-invariant case due to \eqref{normsquaredcurvatureboundedbyc}.
\end{proof}
Next, we  prove that the symmetries of the flow are enhanced when dilating. To this aim, we first show that the Milnor frame passes to the singularity model. Since the proof of that relies on an Ascoli-Arzel\`{a} argument, we need $C^{3}$-bounds with respect to the rescaled solutions.
\begin{lemma}\label{C3bounds}
There exists a continuous function $f:(-\infty,0]\times (0,+\infty)\rightarrow \mathbb{R}_{\geq 0}$ such that 
\[
\sup_{B_{g_{j}(t)}(p_{j},\nu)}\sum_{k = 0}^{3}\lvert \nabla_{g_{j}(t)}^{k}X_{i}\rvert_{g_{j}(t)} \leq f(t,\nu),
\]
\noindent for any $t\in (-\infty,0]$ and $\nu>0$ and for $i = 1,2,3$. Furthermore, there exists $\alpha > 0$ such that for $i = 1,2,3$ 
\begin{equation}\label{C1lowerbound}
\lvert \nabla_{g_{j}(0)} X_{i} \rvert_{g_{j}(0)}(p_{j}) \geq \alpha,
\end{equation}
\noindent up to passing to a subsequence.
\end{lemma}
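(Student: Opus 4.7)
The plan is to reduce the estimates to explicit expressions for $|\nabla^k X_i|_{g(t)}^2$ in terms of the warp functions $b$ and $c$ and their spatial derivatives, and then apply the a priori bounds of Section~4 after tracking the behaviour under the parabolic rescaling.

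First I would work in the $g$-orthonormal frame $e_0 = \partial_s$, $e_\alpha = X_\alpha/b$ for $\alpha = 1, 2$, and $e_3 = X_3/c$, and apply the Koszul formula together with the Milnor brackets $[X_i, X_j] = 2\epsilon_{ijk} X_k$. A direct computation yields
\begin{align*}
|\nabla X_1|_{g(t)}^2 &= 2 b_s^2 + \frac{2 c^2}{b^2} + \frac{4 b^2}{c^2} - 4, \\
|\nabla X_3|_{g(t)}^2 &= 2 c_s^2 + \frac{2 c^4}{b^4},
\end{align*}
with a symmetric formula for $X_2$, and analogous but longer expressions for $|\nabla^k X_i|_{g(t)}^2$ when $k = 2, 3$, involving higher spatial derivatives of $b, c$ and negative powers of $b, c$ generated by the connection coefficients.

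Next, under the rescaling $g_j(t) = \lambda_j g(t_j + t/\lambda_j)$ a $(1, m)$-tensor norm transforms as $|T|_{g_j}^2 = \lambda_j^{1-m} |T|_g^2$ at the corresponding space-time point; in particular $|\nabla X_i|^2$ is scale invariant, while $|\nabla^k X_i|_{g_j}^2 = \lambda_j^{-(k-1)} |\nabla^k X_i|_g^2$ for $k \geq 1$. Combining the identification $B_{g_j(t)}(p_j, \nu) = B_{g(t_j + t/\lambda_j)}(p_j, \nu/\sqrt{\lambda_j})$ with the scale-invariance of $b_s$, each of $b$ and $c$ varies by at most $O(\nu)$ across the ball in the rescaled metric. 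The estimates of Section~4 then apply uniformly on the ball: Lemma~\ref{firstderivativesbounded} (or Lemma~\ref{estimatesforGin}) controls $b_s, c_s$; \eqref{uniformlowerboundforcb} and \eqref{gammadefinition} bound $c/b \in [\varepsilon, 1]$; and the curvature bound \eqref{normsquaredcurvatureboundedbyc} combined with Shi's derivative estimates shows that the negative powers of $b, c$ appearing in $|\nabla^k X_i|^2_g$ are exactly absorbed by the factor $\lambda_j^{-(k-1)}$, yielding a bound of the form $|\nabla^k X_i|_{g_j(t)} \leq f(t, \nu)$. For initial data in $\Gin$, the pseudolocality estimate \eqref{definitionradiusrho} confines the $p_j$ (and the relevant balls, for $j$ large) to a fixed compact Euclidean ball where the above bounds hold.

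Finally, the lower bound \eqref{C1lowerbound} is immediate from the explicit formulas. Since $|\nabla X_i|^2$ is scale invariant and $c \leq b$ by Lemma~\ref{consistencyRFassumption1}, the function $2x^2 + 4/x^{2} - 4$ is decreasing in $x = c/b$ on $(0, 1]$ and equals $2$ at $x = 1$, so $|\nabla X_1|_{g_j(0)}^2(p_j) \geq 2$, and analogously for $X_2$; for $X_3$ we use $|\nabla X_3|^2 \geq 2(c/b)^4 \geq 2 \varepsilon^4$, where $\varepsilon > 0$ is the uniform lower bound on $c/b$ valid globally in the $\G$ case by \eqref{uniformlowerboundforcb} and at $p_j$ in the $\Gin$ case by pseudolocality and \eqref{gammadefinition}. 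The main obstacle is organizational: tracking, for $k = 2, 3$, how each negative power of $b$ or $c$ appearing in $|\nabla^k X_i|^2_g$ is absorbed by the correct power of $\lambda_j^{-1}$ after rescaling, which requires carefully combining Shi's estimates with the curvature bound $b^2 |\mathrm{Rm}| \leq \alpha$.
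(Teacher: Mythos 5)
Your proposal is essentially the paper's own proof, just phrased through the cleaner lens of tensorial scaling. The paper also fixes the orthonormal frame $\{\partial_s, X_1/\bb, X_2/\bb, X_3/\cc\}$, computes $\lvert\nabla X_1\rvert^2 = 2(\bb_s^2 + 2\bb^2/\cc^2 + \cc^2/\bb^2 - 2)$, proceeds case by case in $k=0,1,2,3$, and closes the $k=2,3$ estimates by combining \eqref{normsquaredcurvatureboundedbyc} with Shi's derivative estimates exactly as you suggest, then extends to $t<0$ via Corollary \ref{cadmitslimit}. The lower bound \eqref{C1lowerbound} is handled in the paper just as briefly as in your argument (via monotonicity of the explicit expression on $\cc/\bb\in(0,1]$). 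The only point I'd sharpen: the phrase "negative powers of $\bb,\cc$ are exactly absorbed by $\lambda_j^{-(k-1)}$" elides what actually happens for $k=2,3$ — the paper controls, e.g., $\lambda_j^{-1/2}\lvert\bb_{ss}\rvert$ by writing $\lvert\bb_{ss}\rvert = \bb\lvert k_{01}\rvert\le\alpha\lambda_j\bb$ (using that the rescaled flows have uniformly bounded curvature on the relevant balls) and then reducing to the $k=0$ bound $\sqrt{\lambda_j}\bb\le f(\nu)$; so it is the $k=0$ case, not a literal cancellation of powers, that closes the induction. Similarly you should state explicitly that the time-extension uses $\lvert(\bb^2)_t\rvert\le\alpha$, which gives $\lambda_j\lvert\bb^2(p_j,t_j)-\bb^2(p_j,t_j+t/\lambda_j)\rvert\le\alpha\lvert t\rvert$; this is Corollary \ref{cadmitslimit} and is the ingredient that makes $f$ depend continuously on $t$.
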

\begin{proof}
We fix $t = 0$ and $\nu > 0$ and we let $q\in B_{g_{j}(0)}(p_{j},\nu)$. 
In the following we only analyse the case of $X_{1}$ since the others are proved similarly. We deal with the bounds for $\lvert \nabla_{g_{j}(0)}^{k}X_{1}\rvert_{g_{j}(0)}$ with $k=0,1,2,3$ separately.

 \emph{Case} $k = 0$. We consider a $g(t_{j})$-unit speed geodesic from $p_{j}$ to $q$. From Lemma \ref{firstderivativesbounded} we get
\[
\sqrt{\lambda_{j}}(b(q,t_{j}) - b(p_{j},t_{j})) \leq \sqrt{\lambda_{j}}\left(\sup_{B_{g(t{_j})}(p_{j},\frac{\nu}{\sqrt{\lambda_{j}}})} \lvert \bb_{s} \rvert \right)d_{g(t_{j})}(p_{j},q) \leq \alpha\,\nu.
\]
\noindent The desired estimate then follows from \eqref{normsquaredcurvatureboundedbyc} which gives $\lambda_{j}\bb^{2}(p_{j},t_{j}) \leq \alpha$. 

 \emph{Case} $k=1$. By direct computation we get
\[
\lvert \nabla_{g(t)} X_{1} \rvert^{2}(\cdot,t) = 2\left(\bb_{s}^{2} + 2\frac{\bb^{2}}{\cc^{2}} + \frac{\cc^{2}}{\bb^{2}} - 2 \right)(\cdot,t),
\]
\noindent for any $t\in [0,T)$. Lemma \ref{consistencyRFassumption1} and Lemma \ref{firstderivativesbounded} imply that $\lvert \nabla_{g(t)} X_{1} \rvert(\cdot,t)$ is uniformly bounded and that the estimate \eqref{C1lowerbound} is satisfied.

\emph{Case} $k=2$. We analyse in detail only one exemplificative instance.  
One of the terms appearing in the computation of the norm $(\lambda_{j})^{-\frac{1}{2}}\lvert \nabla_{g(t_{j})}^{2}X_{1}\rvert$ is 
\[
(\lambda_{j})^{-\frac{1}{2}}\lvert\nabla_{g(t_{j})}^{2}X_{1}(\partial s,\partial s, \sigma_{1})\rvert \bb (q, t_{j})\equiv (\lambda_{j})^{-\frac{1}{2}}\lvert \bb_{ss} \rvert(q, t_{j}) \leq \alpha\sqrt{\lambda_{j}}\bb(q,t_{j}),
\]
\noindent where we have used \eqref{sectionalhorizontal01}. The last term is then bounded because it coincides with the case $k=0$ we have already discussed.

 \emph{Case} $k=3$. One of the terms appearing in the computation of $(\lambda_{j})^{-1}\lvert \nabla_{g(t_{j})}^{3}X_{1}\rvert$ is 
\begin{equation}\label{thirdorder}
(\lambda_{j})^{-1}\bb\lvert\nabla_{g(t_{j})}^{3}X_{1}(\partial_{s},\partial_{s},\partial_{s},\sigma_{1})\rvert  (q, t_{j}) = (\lambda_{j})^{-1}\bb\left \vert \frac{\bb_{sss}}{\bb}\right \vert (q, t_{j}).
\end{equation}
\noindent According to Shi's first derivative estimate the covariant derivatives of the curvature are bounded on the singularity models, therefore there exists a uniform constant $\alpha$ such that 
\[
\lvert \nabla_{g(t_{j})} \text{Rm}_{g(t_{j})}\rvert_{g(t_{j})} \leq \alpha (\lambda_{j})^{\frac{3}{2}}.
\]
\noindent Thus we have 
\[
\left \vert \frac{\bb_{sss}}{\bb}\right \vert(q,t_{j}) \leq \left(\left \vert \frac{\bb_{ss}\bb_{s}}{\bb^{2}}\right \vert + \left \vert (k_{01})_{s}\right\vert\right)(q,t_{j}) \leq \left(\alpha\frac{\lambda_{j}}{\bb} + \alpha (\lambda_{j})^{\frac{3}{2}}\right)(q,t_{j}).
\]
\noindent We can then bound the right hand side of \eqref{thirdorder} as 
\[
(\lambda_{j})^{-1}\bb\left \vert \frac{\bb_{sss}}{\bb}\right \vert(q, t_{j}) \leq \alpha(1 + \sqrt{\lambda_{j}}\bb)(q,t_{j}) \leq f(\nu), 
\]
\noindent where the last inequality follows again from the case $k = 0$. The other terms are dealt with similarly.
\\Let now $t\in (-\infty,0]$. By Corollary \ref{cadmitslimit} we get
\[
\lambda_{j}\left \vert \bb^{2}(p_{j},t_{j}) - \bb^{2}(p_{j},t_{j} + \frac{t}{\lambda_{j}}) \right \vert \leq \alpha\lambda_{j}\left \vert \frac{t}{\lambda_{j}} \right \vert \leq \alpha \lvert t \rvert.
\]
\noindent We may then extend the proof of the bound for the case $k = 0$ for any $t\in (-\infty,0]$. The cases $k=1,2,3$ generalize easily.
\end{proof}
Since the rescaled Ricci flows converge to the limit ancient flow in the pointed Cheeger-Gromov sense, from Lemma \ref{C3bounds} it follows that the sequence $(\Phi_{j}^{-1})_{\ast}X_{1}$ is uniformly $C^{3}$-bounded in $B_{g_{\infty}(0)}(p_{\infty},1)$ with respect to $g_{\infty}(0)$. We can then apply the Ascoli-Arzel\`{a} theorem and obtain the following 
\begin{corollaryy}\label{convergenceY1inB1}
There exists a subsequence $(\Phi_{j}^{-1})_{\ast}X_{1}$ that converges in $C^{2}$ to a vector field $X_{1,\infty}$ on $B_{g_{\infty}(0)}(p_{\infty},1)$.
\end{corollaryy}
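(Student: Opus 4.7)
The plan is to transfer the $C^{3}$-bounds of Lemma \ref{C3bounds} back to $M_{\infty}$ through the Cheeger--Gromov diffeomorphisms $\Phi_{j}$ and then invoke Ascoli--Arzel\`{a}. Set $Y_{j}\doteq (\Phi_{j}^{-1})_{\ast}X_{1}$, a smooth vector field defined on the open set $U_{j}\subset M_{\infty}$ where $\Phi_{j}$ is defined. Since the $U_{j}$ exhaust $M_{\infty}$, the closed ball $\overline{B_{g_{\infty}(0)}(p_{\infty},1)}$ is contained in $U_{j}$ for all $j$ sufficiently large.

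The first step is to observe that because $\Phi_{j}\colon (U_{j},\Phi_{j}^{\ast}g_{j}(0))\to (\Phi_{j}(U_{j}),g_{j}(0))$ is an isometry and both the Levi-Civita connection and the fiber metric are natural under isometries, for each integer $k\ge 0$ and each $q\in U_{j}$ we have
\[
\lvert\nabla_{\Phi_{j}^{\ast}g_{j}(0)}^{k}Y_{j}\rvert_{\Phi_{j}^{\ast}g_{j}(0)}(q) \;=\; \lvert\nabla_{g_{j}(0)}^{k}X_{1}\rvert_{g_{j}(0)}(\Phi_{j}(q)).
\]
Since $\Phi_{j}^{\ast}g_{j}(0)\to g_{\infty}(0)$ smoothly on the compact set $\overline{B_{g_{\infty}(0)}(p_{\infty},1)}$, we can choose $\nu>0$ so that $\Phi_{j}(\overline{B_{g_{\infty}(0)}(p_{\infty},1)})\subset B_{g_{j}(0)}(p_{j},\nu)$ for every $j$ large. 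Combining the above identity (for $k=0,1,2,3$) with the $C^{3}$-bound of Lemma \ref{C3bounds} evaluated at $t=0$ then yields the uniform estimate
\[
\sup_{\overline{B_{g_{\infty}(0)}(p_{\infty},1)}}\sum_{k=0}^{3}\lvert\nabla_{\Phi_{j}^{\ast}g_{j}(0)}^{k}Y_{j}\rvert_{\Phi_{j}^{\ast}g_{j}(0)} \;\leq\; f(0,\nu),
\]
with a constant independent of $j$.

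The next step is to convert these bounds into bounds with respect to the fixed metric $g_{\infty}(0)$. The smooth convergence $\Phi_{j}^{\ast}g_{j}(0)\to g_{\infty}(0)$ on $\overline{B_{g_{\infty}(0)}(p_{\infty},1)}$ ensures that, for $j$ large, both metrics and their Christoffel symbols are uniformly comparable and uniformly close in $C^{\infty}$. A standard expansion of $\nabla_{g_{\infty}(0)}^{k}Y_{j}$ in terms of $\nabla_{\Phi_{j}^{\ast}g_{j}(0)}^{k}Y_{j}$ and the difference tensor between the two connections therefore produces a uniform $C^{3}$-bound for $Y_{j}$ with respect to $g_{\infty}(0)$ on $\overline{B_{g_{\infty}(0)}(p_{\infty},1)}$. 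Ascoli--Arzel\`{a} applied to this precompact sequence in $C^{3}$ then gives a subsequence converging in $C^{2}$ to a limit vector field $X_{1,\infty}$ on $\overline{B_{g_{\infty}(0)}(p_{\infty},1)}$.

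The only real subtlety is the bookkeeping between the three metrics involved, namely $g_{j}(0)$ on $\R^{4}$, $\Phi_{j}^{\ast}g_{j}(0)$ on $M_{\infty}$ and $g_{\infty}(0)$ on $M_{\infty}$; once the smooth Cheeger--Gromov convergence is fully exploited this is routine, and the essential analytic content was already packaged into the uniform $C^{3}$-estimate of Lemma \ref{C3bounds}.
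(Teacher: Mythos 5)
Your argument is correct and follows the same route as the paper: the paper's own justification (given in the sentence preceding the corollary) is precisely that the Cheeger--Gromov convergence transports the $C^{3}$-bounds of Lemma \ref{C3bounds} into uniform $C^{3}$-bounds for $(\Phi_{j}^{-1})_{\ast}X_{1}$ with respect to $g_{\infty}(0)$ on the compact ball, after which Ascoli--Arzel\`{a} gives a $C^{2}$-convergent subsequence. You have merely made explicit the bookkeeping (pullback identity for covariant derivatives under the isometry $\Phi_{j}$, inclusion of the fixed ball in $B_{g_{j}(0)}(p_{j},\nu)$, and comparison between $\Phi_{j}^{\ast}g_{j}(0)$ and $g_{\infty}(0)$) that the paper leaves implicit.
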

From now on we re-index the subsequence given by the previous Corollary. 
In order to prove that $X_{1,\infty}$ is actually a Killing vector field for $g_{\infty}(0)$ we need a preliminary result. The following shows that the singularity model cannot be Ricci flat. 
\begin{lemma}\label{bphihgoesto0true}
For any $q\in M_{\infty}$ and for any $t\in (-\infty,0]$ the following is satisfied: 
\[
\lim_{j\rightarrow \infty}\bb(\Phi_{j}(q),t_{j} + \frac{t}{\lambda_{j}}) = 0. 
\]
\end{lemma}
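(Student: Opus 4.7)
The plan is to combine three ingredients: the pointwise vanishing of $\bb$ at the basepoints $(p_j,t_j)$ forced by the scale-invariant estimate \eqref{normsquaredcurvatureboundedbyc}, the spatial Lipschitz control on $\bb$ provided by Lemma \ref{firstderivativesbounded}, and a uniform time-derivative bound on $\bb^2$ that follows from Corollary \ref{cadmitslimit}.

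First, from \eqref{normsquaredcurvatureboundedbyc} and the definition $\lambda_j=\lvert\mathrm{Rm}_{g(t_j)}\rvert_{g(t_j)}(p_j)$ I immediately read off
\[
\bb(p_j,t_j)\leq \alpha\,\lambda_j^{-1/2},
\]
so in particular $\bb(p_j,t_j)\to 0$. Next, fix $q\in M_\infty$. By the pointed Cheeger-Gromov convergence $(\mathbb{R}^4,g_j(0),p_j)\to (M_\infty,g_\infty(0),p_\infty)$ the diffeomorphisms $\Phi_j$ are defined at $q$ for $j$ large, and $d_{g_j(0)}(p_j,\Phi_j(q))\to d_{g_\infty(0)}(p_\infty,q)<\infty$. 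Undoing the rescaling this reads $d_{g(t_j)}(p_j,\Phi_j(q))\leq C\,\lambda_j^{-1/2}$ for some constant $C=C(q)$. Lemma \ref{firstderivativesbounded} gives a uniform spatial bound $\lvert\bb_s\rvert\leq\alpha$ in the space-time, hence
\[
\lvert \bb(\Phi_j(q),t_j)-\bb(p_j,t_j)\rvert \leq \alpha\, d_{g(t_j)}(p_j,\Phi_j(q))\leq \alpha C\,\lambda_j^{-1/2},
\]
and therefore $\bb(\Phi_j(q),t_j)\to 0$ as $j\to\infty$.

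It remains to propagate this smallness to times $t_j+t/\lambda_j$ for $t\leq 0$ fixed. The estimates collected in Lemma \ref{firstderivativesbounded} and Lemma \ref{2ndderivativescontrolleb2}, together with $\cc\leq\bb$ and Lemma \ref{rotationalsymmetry}, show (as already used in the proof of Corollary \ref{cadmitslimit}) that $\lvert(\bb^2)_t\rvert$ is uniformly bounded by some constant $\alpha$ on $\mathbb{R}^4\times[0,T)$. Consequently
\[
\bb^2(\Phi_j(q),t_j+t/\lambda_j)\leq \bb^2(\Phi_j(q),t_j)+\alpha\,\lvert t\rvert/\lambda_j,
\]
and since both terms on the right tend to zero, we obtain $\bb(\Phi_j(q),t_j+t/\lambda_j)\to 0$. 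No step is genuinely hard: the only care needed is to use $\bb^2$ rather than $\bb$ in the time-direction estimate, as the raw evolution equation \eqref{Ricciflowpdes1} for $\bb$ contains a $\cc^2/\bb^3$ term that cannot be controlled once $\bb$ becomes small, whereas the corresponding equation for $\bb^2$ is harmless.
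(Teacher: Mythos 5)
Your proof is correct, and it is genuinely simpler than the paper's. The paper argues by contradiction: assuming $\bb(\Phi_j(q),t_j+t/\lambda_j)\to\mu>0$, it uses \eqref{normsquaredcurvatureboundedbyc} to conclude $\mathrm{Rm}_{g_\infty(t)}(q)=0$, then invokes the nonnegativity of scalar curvature for ancient solutions, the strong maximum principle, and uniqueness to deduce that $g_\infty$ is Ricci-flat; it then brings in the Killing-field lower bound \eqref{C1lowerbound}, real analyticity, and the symmetry estimate of Lemma~\ref{rotationalsymmetry2} to upgrade this to flatness, contradicting the non-flatness of the singularity model. Your argument bypasses all of that machinery: the scale-invariant bound \eqref{normsquaredcurvatureboundedbyc} forces $\bb(p_j,t_j)\lesssim\lambda_j^{-1/2}\to 0$; the uniform spatial Lipschitz bound $\lvert\bb_s\rvert\le\alpha$ from Lemma~\ref{firstderivativesbounded} propagates this to $\Phi_j(q)$, since $d_{g(t_j)}(p_j,\Phi_j(q))=\lambda_j^{-1/2}d_{g_j(0)}(p_j,\Phi_j(q))\to 0$; and the uniform time-derivative bound $\lvert(\bb^2)_t\rvert\le\alpha$ (from the proof of Corollary~\ref{cadmitslimit}) propagates it to $t_j+t/\lambda_j$. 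Your remark about using $\bb^2$ rather than $\bb$ in the time direction is also the correct caution. In fact this is precisely the estimate the paper itself records in the $k=0$ case of Lemma~\ref{C3bounds}, which already yields $\sqrt{\lambda_j}\,\bb(\Phi_j(q),t_j+t/\lambda_j)\le f(t,\nu)$ and hence the conclusion of the lemma; so your route is the one the paper's own lemmas most naturally suggest. The paper's indirect proof trades this brevity for an explicit observation, namely that the singularity model cannot be Ricci-flat, which the author wanted to surface as a remark, but which is not needed for the statement at hand.
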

\begin{proof}
Suppose for a contradiction that there exist $q\in M_{\infty}$, $t\in (-\infty,0]$, a subsequence (which we still denote by $j$) and $\mu > 0$ such that $\bb(\Phi_{j}(q),t_{j} + (\lambda_{j})^{-1}t)\rightarrow \mu$. By \eqref{normsquaredcurvatureboundedbyc} we immediately derive that $R_{g_{\infty}(t)}(q) = 0$. 
Since any complete ancient solution to the Ricci flow has nonnegative scalar curvature \cite{chen2009}, a standard application of the maximum principle and the uniqueness of the flow among complete and bounded curvature solutions yield $\text{Ric}_{\infty} \equiv 0$ everywhere in the space-time. We then fix the time to be $0$ and assume that $g_{\infty}(0)$ is not flat. By the uniform $C^{1}$-lower bound in \eqref{C1lowerbound} there exists an open subset $U\subset B_{g_{\infty}(0)}(p_{\infty},1)$ where $\lvert X_{1,\infty}\rvert_{g_{\infty}(0)}|_{U} > 0$, with $X_{1,\infty}$ given by Corollary \ref{convergenceY1inB1}. From the real analyticity of the ancient limit flow \cite{bando} it follows that there exists $\bar{q}\in U$ such that $\lvert \text{Rm}_{g_{\infty}(0)}\rvert_{g_{\infty}(0)}(\bar{q}) > 0$, otherwise the limit would be flat. Moreover, $\bb(\Phi_{j}(\bar{q}),t_{j}) \rightarrow 0$ as $j\rightarrow \infty$. For if such condition did not hold, then by \eqref{normsquaredcurvatureboundedbyc} the Riemann tensor would vanish at $\bar{q}$. Since $g_{\infty}(0)$ is Ricci flat we get 
\begin{align*}
0 &= \left \vert \text{Ric}_{g_{\infty}(0)} \right \vert^{2}_{g_{\infty}(0)}(\bar{q}) \notag \\ &=\lim_{j\rightarrow \infty}\frac{1}{\lambda^{2}_{j}}\left((k_{01} + k_{02} + k_{03})^{2} + 2 (k_{01} + k_{12} + k_{13})^{2} + (k_{03} + k_{13} + k_{23})^{2}\right)(\Phi_{j}(\bar{q}),t_{j}) \notag \\ &= \lim_{j\rightarrow \infty}\frac{1}{\lambda^{2}_{j}\bb^{4}}\left(\bb^{4}\left((2k_{01} + k_{03})^{2} + 2 (k_{01} + k_{12} + k_{13})^{2} + (k_{03} + 2k_{13})^{2}\right)\right)(\Phi_{j}(\bar{q}),t_{j}).  
\end{align*}
\noindent By the Cheeger-Gromov convergence we get $\Phi_{j}(\bar{q})\in B_{g_{j}(0)}(p_{j},2)$ for $j$ large enough. Therefore, from Lemma \ref{C3bounds} (the case of $k=0$) it follows that $\lambda_{j}\bb^{2}(\Phi_{j}(\bar{q}),t_{j})\leq \alpha$ for $j$ large and for some positive $\alpha$. From the estimate in Lemma \ref{rotationalsymmetry2} we finally derive that the limit above is zero if and only if
\[
\lim_{j\rightarrow \infty}\left(\bb^{2}\lvert \text{sec}_{g(t_{j})}\rvert\right)(\Phi_{j}(\bar{q}),t_{j}) = 0,
\]
\noindent with $\text{sec}_{g(t_{j})}$ the maximal sectional curvature of $g(t_{j})$. 
Therefore by Corollary \ref{convergenceY1inB1} and the choice of $\bar{q}$, up to passing to a diagonal subsequence, we conclude that
\begin{align*}
0 < \lvert X_{1,\infty}\rvert_{g_{\infty}(0)}^{2}\lvert \text{Rm}_{g_{\infty}(0)}\rvert_{g_{\infty}(0)}(\bar{q}) &=  \lim_{j\rightarrow \infty}\lvert X_{1}\rvert_{g_{j}(0)}^{2}\lvert \text{Rm}_{g_{j}(0)}\rvert_{g_{j}(0)}(\Phi_{j}(\bar{q})) \\ &= \lim_{j\rightarrow\infty}\left(\bb^{2}\lvert \text{Rm}_{g(t_{j})}\rvert_{g(t_{j})}\right)(\Phi_{j}(\bar{q})),
\end{align*}
\noindent which is a contradiction because we have just proved that the right hand side must vanish. 
\end{proof}
We can now show that $X_{1,\infty}$ is a Killing vector field on the limit manifold for any time. 
\begin{lemma}\label{Yi'sKilling}
There exists a unique smooth extension of $X_{1,\infty}$ to the limit manifold $M_{\infty}$ such that $(\Phi_{j}^{-1})_{\ast}X_{1}$ converges in $C^{2}$ to $X_{1,\infty}$ on compact sets. Moreover $X_{1,\infty}$ is a $g_{\infty}(t)$-Killing vector field for any $t\in (-\infty,0]$. 
\end{lemma}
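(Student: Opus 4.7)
My plan is to push $X_{1}$ to the limit by an Ascoli-Arzel\`a argument based on the $C^{3}$ bounds of Lemma~\ref{C3bounds}, and to show that its failure to be Killing is controlled by a scale-invariant quantity that vanishes in the blow-up limit thanks to Lemmas~\ref{rotationalsymmetry} and \ref{bphihgoesto0true}.

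For the extension, given a compact $K \subset M_{\infty}$ and $T_{0} > 0$, the Cheeger-Gromov-Hamilton convergence ensures $\Phi_{j}(K) \subset B_{g_{j}(t)}(p_{j},\nu)$ for some $\nu = \nu(K,T_{0})$ and all $t \in [-T_{0},0]$, and $\Phi_{j}^{\ast}g_{j}(t) \to g_{\infty}(t)$ in $C^{\infty}_{\mathrm{loc}}$. Lemma~\ref{C3bounds} then gives uniform $C^{3}$ bounds on $Y_{j} \doteq \Phi_{j}^{\ast} X_{1}$ on $K \times [-T_{0},0]$ with respect to the converging metrics. Ascoli-Arzel\`a yields a $C^{2}$-subsequential limit, and a diagonal argument over a compact exhaustion of $M_{\infty}$ together with $T_{0} \nearrow \infty$ produces a vector field $X_{1,\infty}$ that agrees on $B_{g_{\infty}(0)}(p_{\infty},1)$ with the field from Corollary~\ref{convergenceY1inB1}.

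Next I compute $\mathcal{L}_{X_{1}} g(t)$ directly. Since $b, c$ depend only on $s$ and $X_{1}(s) = 0$, while $[X_{1},\partial_{s}] = 0$ and $[X_{i},X_{j}] = 2\epsilon_{ijk}X_{k}$, all components of $\mathcal{L}_{X_{1}} g(t)$ in the frame $\{\partial_{s}, X_{1}, X_{2}, X_{3}\}$ vanish except
\[
(\mathcal{L}_{X_{1}} g(t))(X_{2}, X_{3}) = 2(b^{2}-c^{2})(\cdot,t).
\]
Computing the pointwise norm in a $g(t)$-orthonormal frame gives $|\mathcal{L}_{X_{1}} g(t)|_{g(t)} = 2\sqrt{2}\,(b^{2}-c^{2})/(bc)$. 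Lemma~\ref{rotationalsymmetry} (and the analogous bound on $B(\origin,x_{1})$ in the $\Gin$ case from Lemma~\ref{estimatesforGin}) yields $b-c \leq \alpha\, bc$, hence
\[
|\mathcal{L}_{X_{1}} g(t)|_{g(t)} \leq \alpha'\, b(\cdot,t).
\]
Because $|\mathcal{L}_{X} g|_{g}$ is scale-invariant, the same bound persists under the parabolic rescaling: $|\mathcal{L}_{X_{1}} g_{j}(t)|_{g_{j}(t)}(p) \leq \alpha'\, b(p, t_{j}+t/\lambda_{j})$.

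Evaluating at $p = \Phi_{j}(q)$ with $q$ in a compact $K \subset M_{\infty}$, the case $k=0$ of Lemma~\ref{C3bounds} provides the uniform bound $\lambda_{j} b^{2}(\Phi_{j}(q), t_{j}+t/\lambda_{j}) = |X_{1}|^{2}_{g_{j}(t)}(\Phi_{j}(q)) \leq C(K,t)$; combined with Lemma~\ref{bphihgoesto0true} this forces $b(\Phi_{j}(q), t_{j}+t/\lambda_{j}) \to 0$ uniformly on $K$ for $t$ in any compact subset of $(-\infty,0]$. Pulling back via $\Phi_{j}$ and using the naturality identity $\Phi_{j}^{\ast}(\mathcal{L}_{X_{1}} g_{j}(t)) = \mathcal{L}_{Y_{j}}(\Phi_{j}^{\ast} g_{j}(t))$ together with the $C^{2}$ convergences $Y_{j} \to X_{1,\infty}$ and $\Phi_{j}^{\ast} g_{j}(t) \to g_{\infty}(t)$, one concludes
\[
\mathcal{L}_{X_{1,\infty}} g_{\infty}(t) \equiv 0 \quad \text{on } M_{\infty}, \;\; \forall t \in (-\infty, 0].
\]
Smoothness of $X_{1,\infty}$ follows from the elliptic regularity theory for Killing fields on the (real-analytic) limit flow, and uniqueness from the agreement on $B_{g_{\infty}(0)}(p_{\infty},1)$. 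The main obstacle is the delicate cancellation between the $\lambda_{j}$-dilation in $\mathcal{L}_{X_{1}} g_{j}$ and the decay of $b$: the argument works only because Lemma~\ref{rotationalsymmetry} controls the defect $(b^{2}-c^{2})/(bc)$ at precisely the scale-invariant rate $O(b)$, which is what encodes the enhancement of symmetries under blow-up.
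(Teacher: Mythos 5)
Your proposal is correct and reaches the same conclusion, but the route is genuinely different from the paper's, and in two interesting ways.

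First, the paper proves the Killing property by contradiction: it assumes a lower bound $\delta > 0$ on $g_{\infty}(0)(\nabla_{Z}X_{1,\infty},W) + g_{\infty}(0)(Z,\nabla_{W}X_{1,\infty})$ near some $q$, pushes forward the test vectors $Z,W$ by $\Phi_{j}$, expands them in the Milnor frame, and isolates (via the Koszul formula) the coefficient $2\lambda_{j}\bb^{2}(1 - \cc^{2}/\bb^{2})(z_{j}^{2}w_{j}^{3} + z_{j}^{3}w_{j}^{2})$, which is then beaten down using Lemma \ref{rotationalsymmetry} and Lemma \ref{bphihgoesto0true}. You instead compute $\mathcal{L}_{X_{1}}g(t)$ explicitly, observe that the only nonzero component is $(\mathcal{L}_{X_{1}}g(t))(X_{2},X_{3}) = 2(\bb^{2}-\cc^{2})$, note that $\lvert\mathcal{L}_{X_{1}}g\rvert_{g}$ is scale-invariant, and bound it by $O(\bb)$ using the same Lemma \ref{rotationalsymmetry}. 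This is a cleaner packaging of the identical analytic content; the paper's coefficient expression is just the components of your Lie derivative in the Milnor frame. Second, the paper extends $X_{1,\infty}$ from $B_{g_{\infty}(0)}(p_{\infty},1)$ to all of $M_{\infty}$ via the real-analyticity of the ancient limit flow and the Nomizu extension theorem, which requires invoking the simple connectedness of $M_{\infty}$ (Lemma \ref{exhaustionlemma}); you instead extend by applying Ascoli--Arzel\`a directly on a compact exhaustion, bypassing both the analyticity and the topological input. This is arguably more elementary and makes the logical dependence on Lemma \ref{exhaustionlemma} lighter.

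One small point you should tighten: your Ascoli--Arzel\`a argument only furnishes a \emph{subsequential} limit, so to match the statement (and the paper's phrasing, in which the full sequence $(\Phi_{j}^{-1})_{\ast}X_{1}$ converges to $X_{1,\infty}$), you need the paper's closing step: every subsequence has a subsubsequence converging to a Killing field, and that Killing field agrees with $X_{1,\infty}$ on $B_{g_{\infty}(0)}(p_{\infty},1)$, hence equals $X_{1,\infty}$ on $M_{\infty}$ by rigidity of Killing fields (a Killing field on a connected manifold is determined by its $1$-jet at a point). Your remark ``uniqueness from the agreement on $B_{g_{\infty}(0)}(p_{\infty},1)$'' is the right idea but should be stated as this subsubsequence-plus-rigidity argument to fully justify convergence of the whole sequence. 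Also, you assert that $\bb(\Phi_{j}(q),t_{j}+t/\lambda_{j}) \to 0$ ``uniformly on $K$''; Lemma \ref{bphihgoesto0true} gives only pointwise vanishing, but that is in fact all you need, since the $C^{1}$ convergence $\mathcal{L}_{Y_{j}}(\Phi_{j}^{\ast}g_{j}(t)) \to \mathcal{L}_{X_{1,\infty}}g_{\infty}(t)$ together with pointwise vanishing of the norms already forces $\mathcal{L}_{X_{1,\infty}}g_{\infty}(t) \equiv 0$.
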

\begin{proof}
We first prove that $X_{1,\infty}$ is a Killing vector field in $B_{g_{\infty}(0)}(p_{\infty},1)$ with respect to $g_{\infty}(0)$. Suppose for a contradiction that there exist $q\in B_{g_{\infty}(0)}(p_{\infty},1)$, $\delta > 0$ and $Z,W\in C^{\infty}(TM_{\infty})$ such that
\[
g_{\infty}(0)\left(\nabla^{g_{\infty}(0)}_{Z}X_{1,\infty}, W\right) + g_{\infty}(0)\left(Z, \nabla^{g_{\infty}(0)}_{W}X_{1,\infty} \right) \geq \delta > 0
\]
\noindent in some compact neighbourhood $\overline{\Omega}$ of $q$. By Corollary \ref{convergenceY1inB1} and the Cheeger-Gromov convergence we get
\[
\left(g_{j}(0)\left(\nabla^{g_{j}(0)}_{(\Phi_{j})_{\ast}Z}X_{1}, (\Phi_{j})_{\ast}W\right) + g_{j}(0)\left((\Phi_{j})_{\ast}Z, \nabla^{g_{j}(0)}_{(\Phi_{j})_{\ast}W}X_{1} \right)\right)\left(\Phi_{j}(q)\right) \geq \frac{\delta}{3},
\]
\noindent for some $j$ large enough. If at $\Phi_{j}(q)$ we write $(\Phi_{j})_{\ast}Z = z^{0}_{j}\partial_{s(t_{j})} + \sum_{k=1}^{3}z_{j}^{k}X_{k}$ and similarly for $(\Phi_{j})_{\ast}W$, then by the Koszul formula we get
\[
\frac{\delta}{3}\leq \left(2\lambda_{j}\bb^{2}\left(1 - \frac{\cc^{2}}{\bb^{2}}\right)\left(z_{j}^{2}w_{j}^{3} + z_{j}^{3}w_{j}^{2}\right)\right)\left(\Phi_{j}(q)\right).
\]
\noindent Since $\lvert Z\rvert_{g_{\infty}(0)}(q,0)\geq \lim_{j\rightarrow \infty}\sqrt{\lambda_{j}}\lvert z_{j}^{k}\rvert\bb(\Phi_{j}(q),t_{j})$, for $k = 1,2$ and similarly for $W$, we can use Lemma \ref{consistencyRFassumption1} (with $\varepsilon > 0$) for the case $k=3$ and conclude that there exists a positive constant $\beta$ depending on $q$ such that
\[
\frac{\delta}{3}\leq 2\beta\left(1 - \frac{\cc^{2}}{\bb^{2}}\right)\left(\Phi_{j}(q),t_{j}\right) \leq \alpha \bb\left(\Phi_{j}(q),t_{j}\right),
\]
\noindent where we have used Lemma \ref{rotationalsymmetry}. According to Lemma \ref{bphihgoesto0true} we can choose $j$ sufficiently large such that the right hand side is as small as we need, thus obtaining the contradiction.
\\Since the limit ancient flow is real analytic \cite{bando} and by Lemma \ref{exhaustionlemma} $M_{\infty}$ is simply connected, it is a classic result that $X_{1,\infty}$ extends uniquely to a global Killing vector field on $(M_{\infty},g_{\infty}(0))$ \cite{nomizu}. Being $g_{\infty}(0)$ complete, we also get that $X_{1,\infty}$ is smooth. 
\\Given $\nu > 1$, Lemma \ref{C3bounds} implies that for any subsequence of $(\Phi_{j}^{-1})_{\ast}X_{1}$ there exists a subsubsequence that converges in $C^{2}$ to some vector field on $B_{g_{\infty}(0)}(p_{\infty},\nu)$. The argument above shows that the limit vector field must be a Killing field for $g_{\infty}(0)$. By the uniqueness result in \cite{nomizu} we conclude that such limit vector field is indeed $X_{1,\infty}$. The statement is then proved when $t = 0$.
The very same proof for the case $t=0$ works when $t\in (-\infty,0]$. 
\end{proof}
The lower bound \eqref{C1lowerbound} and the previous Lemma extend to the sequences $(\Phi_{j}^{-1})_{\ast}X_{2}$ and $(\Phi_{j}^{-1})_{\ast}X_{3}$ which then define analogous Killing vector fields $X_{2,\infty}$ and $X_{3,\infty}$ for the singularity model. Moreover, from the Cheeger-Gromov-Hamilton convergence we derive that the system $\{X_{i,\infty}\}_{i=1}^{3}$ is an orthogonal frame with respect to $g_{\infty}(t)$ for any $t\in (-\infty,0]$. We can now prove that this frame of Killing fields implies that the singularity model is spherically symmetric.
\begin{lemma}\label{rotationalsymmetrylimit}
The metric $g_{\infty}(t)$ is rotationally symmetric for any $t\in (-\infty,0]$. Moreover $M_{\infty} = \mathbb{R}^{4}$ or $M_{\infty} = \mathbb{R}\times S^{3}$.
\end{lemma}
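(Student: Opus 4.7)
The plan is to promote the three limiting Killing fields $X_{i,\infty}$ to a global isometric $\mathrm{SU}(2)$ action on $M_\infty$, use the estimates from Section 4 to show the action is by round spheres, and read off the topology from simple connectedness.

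First I would run the Ascoli--Arzel\`a / Killing extension argument of Corollary~\ref{convergenceY1inB1} and Lemma~\ref{Yi'sKilling} in parallel for $X_2$ and $X_3$, producing globally defined Killing fields $X_{2,\infty}, X_{3,\infty}$ for $g_\infty(t)$, each pointwise $g_\infty(t)$-orthogonal to the others and to $\partial_s$ by the Cheeger--Gromov convergence. Since pullback commutes with Lie brackets, on each ball $B_{g_j(0)}(p_j,\nu)$ the fields $(\Phi_j^{-1})_*X_i$ satisfy the Milnor relations $[\,(\Phi_j^{-1})_*X_i,(\Phi_j^{-1})_*X_j\,] = 2\epsilon_{ijk}(\Phi_j^{-1})_*X_k$. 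The $C^2$ convergence therefore passes these relations to the limit on a dense open set, and real analyticity of ancient flows \cite{bando} plus the uniqueness theorem of \cite{nomizu} extend them globally. Simple connectedness of $M_\infty$ (Lemma~\ref{exhaustionlemma}) then lets me integrate this $\mathfrak{su}(2)$ subalgebra of $g_\infty(t)$-Killing fields to a global isometric $\mathrm{SU}(2)$ action; the lower bound \eqref{C1lowerbound} implies the action is effective and of cohomogeneity one, with generic orbits diffeomorphic to $S^3$.

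Next I would show that each principal orbit is \emph{round}. Since $\lvert X_1\rvert_{g_j(t)}/\lvert X_3\rvert_{g_j(t)} \equiv b/c$ is scale-invariant, the Cheeger--Gromov convergence gives
\[
\frac{\lvert X_{1,\infty}\rvert_{g_\infty(t)}}{\lvert X_{3,\infty}\rvert_{g_\infty(t)}}(q) \;=\; \lim_{j\to\infty}\frac{b}{c}\bigl(\Phi_j(q),\,t_j+t/\lambda_j\bigr).
\]
By Lemma~\ref{rotationalsymmetry} the right side lies in $[1,1+\alpha\,b(\Phi_j(q),\cdot)]$, and Lemma~\ref{bphihgoesto0true} forces $b(\Phi_j(q),\cdot)\to 0$. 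Hence $\lvert X_{1,\infty}\rvert=\lvert X_{2,\infty}\rvert=\lvert X_{3,\infty}\rvert$ pointwise, so on each principal orbit the Milnor frame is a common multiple of an orthonormal frame; the induced metric is bi-invariant on $\mathrm{SU}(2)\simeq S^3$ and hence round. Flowing along a unit-speed geodesic transverse to the orbits, this yields a local doubly warped representation $g_\infty(t) = ds\otimes ds + f(s,t)^2 g_{S^3}$ whose Killing algebra contains $\mathfrak{so}(4)$. A second application of the uniqueness extension argument of Lemma~\ref{Yi'sKilling} propagates these additional Killing fields to all of $M_\infty$, establishing that $g_\infty(t)$ is rotationally symmetric.

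Finally I would classify the topology from the $\mathrm{SU}(2)$-cohomogeneity-one structure. Either the action is free, in which case the orbit space is a connected $1$-manifold and $M_\infty$ is an $S^3$-bundle over it; simple connectedness together with completeness of $g_\infty(t)$ then forces $M_\infty\cong \mathbb{R}\times S^3$. Otherwise there is a singular orbit, and by the classification of isotropy groups of $\mathrm{SU}(2)$ acting on a simply connected $4$-manifold the only possibility is a fixed point, yielding $M_\infty \cong \mathbb{R}^4$. The step I expect to be hardest is the global propagation of the extra $\mathfrak{so}(4)$-symmetry that is produced only on each principal orbit, but real analyticity combined with simple connectedness should make this step clean, exactly as in the earlier extension of $X_{1,\infty}$.
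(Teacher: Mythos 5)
Your proposal follows the paper's proof quite closely: extend $X_{2},X_{3}$ to global Killing fields via the same Ascoli--Arzel\`a and Nomizu-uniqueness argument, pass the Milnor bracket relations to the limit, integrate the resulting $\mathfrak{su}(2)$ to an isometric cohomogeneity-one $\mathrm{SU}(2)$ action, and then use the scale-invariant ratio $b/c$ together with Lemma~\ref{rotationalsymmetry} to equalise the three warping functions. Your route to $\phi_{1,\infty}=\phi_{3,\infty}$ via $b/c\in[1,1+\alpha b]$ and Lemma~\ref{bphihgoesto0true} is an equivalent (and arguably cleaner) reformulation of the paper's estimate, which instead divides by $\sqrt{\lambda_j}\,b$ and uses $\lambda_j\to\infty$. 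Your ``second application of the uniqueness extension argument'' is harmless but unnecessary: once $g_\infty(t)=ds^2+\phi_\infty^2\,\bar g_{S^3}$ is established on a tubular neighbourhood of one principal orbit, cohomogeneity one forces this form on the entire dense principal part, and it extends to any singular orbit by smoothness; the round $\mathfrak{so}(4)$ Killing fields are then automatic, with nothing to propagate.

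There is a genuine gap in the final topological step. You assert that ``by the classification of isotropy groups of $\mathrm{SU}(2)$ acting on a simply connected $4$-manifold the only possibility is a fixed point.'' This is false as a purely topological statement: the total space of $\mathcal{O}(-1)\to\mathbb{CP}^1$ is simply connected and carries a cohomogeneity-one $\mathrm{SU}(2)$ action whose singular orbit is $S^2$ (isotropy $S^1$), and more generally any $S^1\subset\mathrm{SU}(2)$ gives a codimension-two singular orbit compatible with simple connectedness. What actually eliminates this case is the rotational symmetry you have just proved: since $\phi_{1,\infty}=\phi_{2,\infty}=\phi_{3,\infty}$, the only way a singular orbit can occur is for all three warping functions to vanish simultaneously, i.e.\ the singular orbit has codimension four and is a fixed point. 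With that corrected, the smoothness conditions ($\phi_\infty$ odd, $\partial_y\phi_\infty=1$ at the orbit) give $M_\infty=\mathbb{R}^4$, exactly as in the paper. In short, the topology classification must be read off \emph{after} and \emph{from} the rotational symmetry, not from the group action and $\pi_1(M_\infty)=0$ alone.
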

\begin{proof}
According to \eqref{C1lowerbound} and the orthogonality of the vector fields $X_{i,\infty}$ there exists at least a point $q\in M_{\infty}$ where this frame spans a 3-dimensional subspace of $T_{q}M_{\infty}$. Therefore, since the Lie brackets are preserved in the limit, Lemma \ref{Yi'sKilling} implies that there exists a (non-trivial) copy of $\mathfrak{su}(2)$ in the Lie algebra of Killing fields $\mathfrak{iso}(M_{\infty},g_{\infty}(t))$. By integrating the Killing fields we derive that SU(2) acts isometrically with cohomogeneity 1 on $(M_{\infty},g_{\infty}(t))$ for any $t\in (-\infty,0]$. In particular, by the Lie algebra constants we see that $\{X_{i,\infty}\}_{i=1}^{3}$ is a Milnor frame for $g_{\infty}(t)$. 
\\By the classification of connected non-compact manifolds supporting the cohomogeneity 1 action of a compact Lie group there exists at the most one singular orbit $\mathcal{O}_{sing}$ for the SU(2) action on $M_{\infty}$ \cite{cohomogeneity1}. Moreover, we can write $M_{\infty} = \mathcal{O}_{sing} \cup M_{\text{prin}}$, where $M_{\text{prin}}$ is an open dense submanifold foliated by maximal orbits of the form
\begin{equation}\label{definitionprincipalpart}
M_{\text{prin}} = \mathbb{R}\times \text{SU(2)}/H,
\end{equation}
\noindent with $H$ the isotropy group of the action along principal orbits \cite{cohomogeneity1}. We note that when $\mathcal{O}_{sing} = \emptyset$ Lemma \ref{exhaustionlemma} implies that $M_{\infty} = \mathbb{R}\times S^{3}$.
\\All the information about $g_{\infty}(t)$ can be obtained by restricting it to a geodesic starting at the singular orbit and meeting the principal orbits orthogonally. Namely, once we denote the dual coframe associated with $\{X_{i,\infty}\}_{i=1}^{3}$ by $\{\sigma_{i,\infty}\}_{i=1}^{3}$, we have 
\[
g_{\infty}(t)|_{M_{\text{prin}}} = (dy_{t})^{2} + \phi_{1,\infty}^{2}(y,t)\,\sigma_{1,\infty}^{2} + \phi_{2,\infty}^{2}(y,t)\,\sigma_{2,\infty}^{2} + \phi_{3,\infty}^{2}(y,t)\,\sigma_{3,\infty}^{2},
\]
\noindent with $\phi_{i,\infty}(y,t) \doteq \lvert X_{i,\infty}\rvert_{g_{\infty}(t)} (y)$ for any $y > 0$, $t\leq 0$ and $i = 1,2,3$. 
\\Let $q\in M_{\text{prin}}$. By the convergence of $(\Phi_{j}^{-1})_{\ast}X_{i}$ to $X_{i,\infty}$ on compact sets, we get 
\begin{align*}
\frac{1}{\phi_{1,\infty}}\left(\frac{\phi_{1,\infty}}{\phi_{3,\infty}}-1\right)(q,t) &\equiv \frac{1}{\lvert X_{1,\infty}\rvert_{g_{\infty}(t)}}\left(\frac{\lvert X_{1,\infty}\rvert_{g_{\infty}(t)}}{\lvert X_{3,\infty}\rvert_{g_{\infty}(t)}}-1\right)(q) \\ &= \lim_{j\rightarrow \infty} \frac{1}{\sqrt{\lambda_{j}}\bb}\left(\frac{\bb}{\cc} - 1\right)\left(\Phi_{j}(q),t_{j} + \frac{t}{\lambda_{j}}\right) \leq 0,
\end{align*}
\noindent where we have used the estimate in Lemma \ref{rotationalsymmetry}. Since the ratio $\bb/\cc\geq 1$ is scale invariant we obtain $\phi_{1,\infty} = \phi_{3,\infty}$. The final identity $\phi_{1,\infty} = \phi_{2,\infty}$ is a consequence of the extra U(1)-symmetry. Thus $g_{\infty}(t)$ is rotationally symmetric. Furthermore, if there exists a singular orbit, then $\phi_{i,\infty} \equiv \phi_{\infty}$ is an odd function with $\partial_{y_{t}}\phi_{\infty}(\mathcal{O}_{sing},t) = 1$ (see, e.g., \cite{cohomogeneity1}). From the boundary conditions \eqref{smoothnessorigin} we deduce that $M_{\infty} = \mathbb{R}^{4}$. We may finally conclude that $M_{\infty}=\mathbb{R}^{4}$ or $M_{\infty} = \mathbb{R}\times S^{3}$ with 
\begin{equation}\label{sphericalsymmetrylimitformula}
g_{\infty}(t) = (dy_{t})^{2} + \phi_{\infty}^{2}(y,t)\bar{g}_{S^{3}},
\end{equation}
\noindent where $\bar{g}_{S^{3}}$ is the standard constant curvature 1 metric on $S^{3}$ and 
\[
\phi_{\infty}(q,t) = \lim_{j\rightarrow\infty}\sqrt{\lambda_{j}}\,\bb\left(\Phi_{j}(q),t_{j}+ \frac{t}{\lambda_{j}}\right),
\] 
\noindent for any $(q,t)\in M_{\infty}\times (-\infty,0]$.
\end{proof}
We now show that Lemma \ref{rotationalsymmetrylimit} actually extends to any singularity model of a warped Berger Ricci flow in $\Gin$. Indeed, given a blow-up sequence $(p_{j},t_{j})$ as above and a radial coordinate $x_{1} > \rho$, with $\rho$ satisfying \eqref{definitionradiusrho}, then by Lemma \ref{estimatesforGin} it suffices to prove that any rescaled geodesic ball $B_{g_{j}(t)}(p_{j},\nu)$ lies in $B(\origin,x_{1})$ for $j$ sufficiently large.
\begin{lemma}\label{claimzozzo}
Let $(M_{\infty},g_{\infty}(t),p_{\infty})_{-\infty < t \leq 0}$ be a singularity model for a warped Berger Ricci flow $(\R^{4},g(t))_{0\leq t < T}$, with $T < \infty$, starting at some $g_{0}\in \Gin$. For any $t\leq 0$, for any $\nu > 0$ and for any $x_{1} > \rho$, with $\rho$ satisfying \eqref{definitionradiusrho}, there exists $j_{0} = j_{0}(t,\nu,x_{1})$ such that for all $j\geq j_{0}$ we have
\[
B_{g_{j}(t)}(p_{j},\nu)\subset B(\origin,x_{1}).
\]
\end{lemma}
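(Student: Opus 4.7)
The idea is to combine the pseudolocality-type bound \eqref{definitionradiusrho} with the Ricci flow equation to produce a uniform-in-time positive lower bound on the $g(t)$-width of the annulus $A\doteq B(\origin,x_{1})\setminus B(\origin,\rho)$, and then to observe that the rescaled balls $B_{g_{j}(t)}(p_{j},\nu)$ correspond to $g(t_{j}+t/\lambda_{j})$-balls of vanishing radius $\nu/\sqrt{\lambda_{j}}$. First, since $\lambda_{j}=\lvert\text{Rm}_{g(t_{j})}\rvert_{g(t_{j})}(p_{j})\to \infty$ while $\lvert \text{Rm}_{g(t)}\rvert\leq 1$ on $\R^{4}\setminus B(\origin,\rho)$ uniformly in $t\in [0,T)$, the base points $p_{j}$ must lie in $B(\origin,\rho)$ once $j$ is large enough.

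The main step is the uniform width estimate. Since \eqref{definitionradiusrho} holds in particular on $A$, we have $\lvert\text{Ric}_{g(t)}\rvert\leq 3$ there. Writing $g(t)|_{A}$ in the fixed $x$-coordinate and using $\partial_{t}g_{xx}=-2\text{Ric}_{g(t)}(\partial_{x},\partial_{x})$, one finds $\lvert\partial_{t}\log\xi^{2}(x,t)\rvert\leq 6$ for $x\in[\rho,x_{1}]$ and $t\in [0,T)$; integrating in time yields
\[
e^{-3T}\xi(x,0)\leq \xi(x,t)\leq e^{3T}\xi(x,0) \qquad \text{on } [\rho,x_{1}]\times[0,T).
\]
Now, any smooth curve $\gamma$ joining a point of $B(\origin,\rho)$ to a point of $\R^{4}\setminus B(\origin,x_{1})$ must admit a sub-arc on which the coordinate $x\circ\gamma$ traverses the whole interval $[\rho,x_{1}]$; on that sub-arc the inequality $\lvert\dot\gamma\rvert_{g(t)}\geq \xi(x,t)\lvert\dot x\rvert$ implies that the $g(t)$-length of $\gamma$ is at least $\int_{\rho}^{x_{1}}\xi(x,t)\,dx$. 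Consequently
\[
d_{g(t)}\bigl(B(\origin,\rho),\R^{4}\setminus B(\origin,x_{1})\bigr)\,\geq\, C \doteq e^{-3T}\bigl(s(x_{1},0)-s(\rho,0)\bigr)\, >\, 0,
\]
uniformly in $t\in [0,T)$.

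Finally, for $t\leq 0$ fixed, the ball $B_{g_{j}(t)}(p_{j},\nu)$ coincides with the $g(t_{j}+t/\lambda_{j})$-ball of radius $\nu/\sqrt{\lambda_{j}}$ around $p_{j}$. Choosing $j_{0}=j_{0}(t,\nu,x_{1})$ so large that for every $j\geq j_{0}$ one has $p_{j}\in B(\origin,\rho)$, $t_{j}+t/\lambda_{j}\in [0,T)$ (possible since $\lambda_{j}\to\infty$ and $t_{j}\nearrow T$), and $\nu/\sqrt{\lambda_{j}}<C$, the previous paragraph forces every point of $B_{g_{j}(t)}(p_{j},\nu)$ to lie in $B(\origin,x_{1})$, which is the claim. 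The only delicate ingredient is the uniform width bound on the annulus, whose proof relies entirely on the global-in-time curvature control \eqref{definitionradiusrho} supplied by the pseudolocality result of \cite{pseudolocalityapplication}; the rest of the argument is a direct rescaling.
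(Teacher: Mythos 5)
Your proof is correct and follows essentially the same route as the paper's: both arguments exploit the pseudolocality bound \eqref{definitionradiusrho} to control the time-evolution of the metric on the annulus $B(\origin,x_1)\setminus B(\origin,\rho)$ and hence obtain a uniform-in-time positive lower bound for the $g(t)$-distance across it, while the rescaled balls have $g(t_j+t/\lambda_j)$-radius $\nu/\sqrt{\lambda_j}\to 0$. The paper phrases this via a contradiction and invokes "standard distortion estimates" to get $d_{g(t_{j_k}+t/\lambda_{j_k})}(y,z)\geq\alpha\,d_{g_0}(y,z)$ for points on the two boundary spheres, whereas you unpack that black box by integrating $\partial_t\log\xi^2=-2\,\mathrm{Ric}(\partial_s,\partial_s)$ in time on the annulus and bounding the length of the crossing sub-arc from below; this is a slightly more self-contained presentation but not a different idea.
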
 
\begin{proof}
Given a blow-up sequence $(p_{j},t_{j})$ with $p_{j} = (x_{j},\theta)$ for some $\theta\in S^{3}$, we observe that up to a finite number of indices we have $x_{j}< \rho$ otherwise $\lvert \text{Rm}_{g(t_{j})}\rvert_{g(t_{j})}(p_{j})$ would be bounded. Suppose for a contradiction that there exist a time $t$, a radius $\nu$, a coordinate $x_{1} > \rho$ and a subsequence $q_{j_{k}} = (y_{j_{k}},\theta_{j_{k}})\subset B_{g_{j}(t)}(p_{j},\nu)$ such that $y_{j_{k}} > x_{1}$. Then by \eqref{definitionradiusrho} and standard distortion estimates of the Riemannian distance we get
\begin{align*}
\frac{\nu}{\sqrt{\lambda_{j_{k}}}} &> d_{g(t_{j_{k}} + \frac{t}{\lambda_{j_{k}}})}(p_{j_{k}},q_{j_{k}}) \geq \inf_{\substack{y\in S(\origin,\rho) \\ z\in S(\origin,x_{1})}}d_{g(t_{j_{k}} + \frac{t}{\lambda_{j_{k}}})}(y,z) \\ &\geq 
\alpha \inf_{\substack{y\in S(\origin,\rho) \\ z\in S(\origin,x_{1})}}d_{g_{0}}(y,z), 
\end{align*}
\noindent which then gives us a contradiction for $k$ large enough.
\end{proof}
We may then adapt all the arguments above and conclude that any singularity model of a warped Berger Ricci flow in $\Gin$ is rotationally symmetric. We finally address the proof of the classification result in Proposition \ref{classificationsingularity}.
\begin{proof}[Proof of Proposition \ref{classificationsingularity}] Lemma \ref{rotationalsymmetrylimit} implies that any singularity model is in particular conformally flat. Thus by \cite{zhang} we derive that any singularity model has nonnegative curvature operator. Since we have shown that singularity models are weakly $\kappa$-non-collapsed at all scales, we find that any singularity model is a $\kappa$-solution to the Ricci flow. If the curvature operator is not strictly positive at some point in the space-time, then the same argument in \cite[Lemma 4.3]{work} shows that $(M_{\infty},g_{\infty}(t))$ splits off a line and must hence be isometric to the self-similar shrinking soliton on the cylinder $\R\times S^{3}$.
\\Conversely, if the curvature operator is strictly positive at a point, then by the strong maximum principle we conclude that the singularity model is positively curved. 
\end{proof}
\begin{remark}\label{classificationextends}
We point out that Proposition \ref{classificationsingularity} extends to warped Berger Ricci flows for which both the estimate \eqref{normsquaredcurvatureboundedbyc} and the rotational symmetry type of bounds in Lemmas \ref{rotationalsymmetry}, \ref{rotationalsymmetryorder1} and \ref{rotationalsymmetry2} are satisfied.  
\end{remark}


\section{Proofs of the main results}
\subsection{Bryant soliton singularities.}
In this subsection we show that any Ricci flow in $\G$ encounters a Type-II singularity and that Theorem \ref{maintheoremtype2bis} is satisfied.
\begin{proof}[Proof of Theorem \ref{maintheoremtype2}]
According to Lemma \ref{finite-timesingularity} the Ricci flow develops a finite-time singularity at some $T<\infty$. Suppose that the Ricci flow is Type-I and let $\Sigma$ be the singular set defined as in \cite[Definiton 1.5]{type1}. 

If the origin $\origin$ does not belong to $\Sigma$, then the flow stays smooth on $B(\origin,2r)$ for some $r > 0$. Thus there exists $\delta > 0$ such that $\bb(r,t)\geq \delta > 0$ for any $t\in [0,T)$. Lemma \ref{cruciallemma} then implies $\bb(x,t)\geq \delta$ for any $x\geq r$ and for all $t\in [0,T)$. From the estimate \eqref{normsquaredcurvatureboundedbyc} we finally deduce that the curvature stays uniformly bounded outside $B(\origin,r)$ and hence on $\R^{4}$. The latter condition contradicts that the flow develops a singularity at $T$ \cite{shi}. 

If $\origin\in\Sigma$, then we can apply \cite[Theorem 1.1]{type1} and derive that any parabolic dilation of the flow at $\origin$ (sub)converges to a \emph{non-flat} shrinking soliton. By the classification in Proposition \ref{classificationsingularity} we get that any such singularity model is a shrinking cylinder \cite{kotschwar}. However by the SU(2) symmetry and the Cheeger-Gromov convergence we have just shown that the cylinder $\R\times S^{3}$ is exhausted by open sets diffeomorphic to $\R^{4}$, which is not possible. Therefore, the singularity is Type-II.

Since the flow is Type-II and non-collapsed we can choose a blow-up sequence giving rise to a singularity model which consists of an \emph{eternal} solution \cite[Section 16]{formationsingularities}). By the classification in Proposition \ref{classificationsingularity} we deduce that such eternal solution is rotationally symmetric and \emph{positively} curved\footnote{At this point, one can also rely on the recent classification of rotationally symmetric $\kappa$-solutions in \cite{brendle2} to conclude that such eternal solution must be isometric to the Bryant soliton. However, we chose to present a more self-contained argument which is sufficient to complete the proof of Theorem 1.}. Therefore the scalar curvature and the Riemann curvature are comparable up to the singular time and we can hence adapt the argument in \cite{formationsingularities} to extract a space-time sequence $(p_{j},t_{j})$, with $t_{j}\nearrow T$, such that if we set $\lambda_{j} \doteq R_{g(t_{j})}(p_{j})$, then the rescaled Ricci flows $(\R^{4},g_{j}(t),p_{j})$ defined by $g_{j}(t)\doteq \lambda_{j}g(t_{j} + (\lambda_{j})^{-1}t)$ (sub)converge in the pointed Cheeger-Gromov sense to a $\kappa$-solution whose scalar curvature attains its supremum in the space-time. According to \cite{eternal} the singularity model is then a gradient steady soliton and must hence be isometric to the Bryant soliton by the classification in Proposition \ref{classificationsingularity}.   
\end{proof}
\vspace{0.2in}
\begin{proof}[Proof of Theorem \ref{maintheoremtype2bis}] We prove the four points in Theorem \ref{maintheoremtype2bis} separately.

\emph{(i) The Bryant soliton appears at the origin.} Let $(p_{j},t_{j})$ and $\lambda_{j}$ be defined as in the proof of Theorem \ref{maintheoremtype2}, let $\Phi_{j}$ be the family of diffeomorphisms given by the Cheeger-Gromov convergence and let $(\R^{4},g_{\infty}(t),p_{\infty})$ be the Bryant soliton arising as limit singularity model. By the SU(2) symmetry we may choose $p_{j}$ of the form $(x_{j},\theta)$ for some $\theta\in S^{3}$. Suppose for a contradiction that there exists $\delta > 0$ such that for $j$ sufficiently large we have $d_{g_{j}(0)}(\origin,p_{j}) \geq 2\delta > 0$. We may then find points $q_{j} \equiv (\tilde{x}_{j},\theta)$, with $\tilde{x}_{j} < x_{j}$, such that, up to passing to a subsequence, $\Phi_{j}^{-1}(q_{j})\rightarrow q_{\infty}$ for some $q_{\infty}$ satisfying $d_{g_{\infty}(0)}(p_{\infty},q_{\infty}) = \delta$. Since the scalar curvature of the Bryant soliton $g_{\infty}(t)$ attains its maximum at the centre of symmetry, i.e. at the origin of $\R^{4}$, we deduce that $p_{\infty} = \origin\in \R^{4}$ and therefore that the Killing vectors $\{X_{i,\infty}\}$ constructed above need to vanish at $p_{\infty}$. Equivalently, from the argument in Lemma \ref{rotationalsymmetrylimit} we derive that
\[
0 = \lvert X_{1,\infty} \rvert_{g_{\infty}(0)}(p_{\infty}) = \lim_{j\rightarrow\infty} \sqrt{\lambda_{j}}\bb(x_{j},t_{j}).
\]
\noindent Since the warping coefficient $\bb$ is monotone in space and $\tilde{x}_{j} < x_{j}$ we have 
\[
\lvert X_{1,\infty}\rvert_{g_{\infty}(0)}(q_{\infty}) = \lim_{j\rightarrow\infty}\sqrt{\lambda_{j
}}\bb(\tilde{x}_{j},t_{j}) \leq 0,
\]
\noindent which is not possible because the Killing fields generating the rotational symmetry cannot vanish along a principal orbit, i.e. away from the origin. Therefore, up to choosing a subsequence, we have $d_{g_{j}(0)}(\origin,p_{j}) \rightarrow 0$. In particular, we may pick a subsequence such that $R_{g(t_{j})}(\origin) \geq (1-\delta_{j})\lambda_{j}$ for some $\delta_{j}\rightarrow 0$. If we then dilate the Ricci flow by factors $R_{g(t_{j})}(\origin)$ we still obtain the Bryant soliton as pointed Cheeger-Gromov limit.

\emph{(ii) The singularity is global.} Consider the set of points where the flow becomes singular as $t\nearrow T$:
\[
\Omega \doteq \left\{p\in\R^{4}: \lim_{t\nearrow T}\bb(p,t) = \lim_{t\nearrow T}\cc(p,t) = 0. \right\}.
\]
\noindent We note that the previous definition makes sense due to Corollary \ref{cadmitslimit} and the estimate \eqref{normsquaredcurvatureboundedbyc}. Part (ii) in the statement of Theorem \ref{maintheoremtype2bis} is equivalent to showing $\Omega = \R^{4}$. Indeed we have proved above that the curvature cannot stay uniformly bounded at the origin, while away from the origin the estimate \eqref{normsquaredcurvatureboundedbyc} implies that both $\bb$ and $\cc$ need to converge to zero as $t\nearrow T$ for the curvature to blow-up. 
We assume for a contradiction that $\Omega \neq \R^{4}$. By Lemma \ref{cruciallemma} there exists $\bar{x} \geq 0$ satisfying $\Omega = B(\origin,\bar{x})$. We may always take the Euclidean ball $B(\origin,\bar{x})$ to be \emph{closed} because by Corollary \ref{cadmitslimit} there exists a uniform constant $\alpha > 0$ such that $\bb^{2}(x,t) \leq \alpha(T-t)$ for all $x < \bar{x}$.
\begin{claim}\label{claimderivativezero}
Let $(\R^{4},g(t))_{0\leq t < T}$ be the Ricci flow starting at some $g_{0}\in\G$. Then 
$\lim_{t\nearrow T}\cc H(x,t) = 0$ for any $x > \bar{x}$.
\end{claim}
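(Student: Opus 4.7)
The plan is to combine the smooth convergence of $g(t)$ on $\R^{4}\setminus B(\origin,\bar{x})$, which rests on the preserved monotonicity $\bb_{s}\geq 0$, with the Bryant soliton limit at the origin obtained in part (i), exploiting the scale-invariance of $\cc H$.

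The first step is to check that the pointwise limit $\cc H(x,T)\doteq \lim_{t\nearrow T}\cc H(x,t)$ exists for every $x>\bar{x}$, so that it is enough to identify this limit along a single subsequence. Fix $x_{1}>\bar{x}$: by definition of $\bar{x}$ the value $\bb(x_{1},T)$ is strictly positive, and combining Lemma \ref{cruciallemma} (monotonicity of $\bb$ in $s$) with the Lipschitz bound $\lvert (\bb^{2})_{t}\rvert\leq \alpha$ from Corollary \ref{cadmitslimit} yields $\delta>0$ and $t_{0}\in [0,T)$ with $\bb\geq \delta$ on $[x_{1},\infty)\times [t_{0},T)$. Estimate \eqref{normsquaredcurvatureboundedbyc} then bounds $\lvert\text{Rm}_{g(t)}\rvert$ uniformly on this region, and Shi's derivative estimates upgrade this to $C^{\infty}_{\text{loc}}$-convergence of $g(t)$ to a smooth limit $g(T)$ on $(\bar{x},\infty)$, which gives the existence of the desired limit.

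The second step is to invoke part (i): there is a sequence $t_{j}\nearrow T$ with $R_{j}\doteq R_{g(t_{j})}(\origin)\to \infty$ such that the rescaled flows $g_{j}(0)=R_{j}g(t_{j})$ converge in the pointed Cheeger-Gromov sense to the Bryant soliton $(\R^{4},g_{\infty},\origin_{\infty})$ via diffeomorphisms $\Phi_{j}$ fixing the origin. For any $x>\bar{x}$, Lemma \ref{firstderivativesbounded} gives $s(x,t_{j})\geq \bb(x,t_{j})/\alpha\geq \delta/\alpha>0$, so $d_{g_{j}(0)}(\origin,x)=\sqrt{R_{j}}\,s(x,t_{j})\to \infty$ and $\Phi_{j}(x)$ diverges in $M_{\infty}$. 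The Bryant soliton has the rotationally symmetric form $g_{\infty}=dr^{2}+\phi^{2}(r)\bar{g}_{S^{3}}$ with $\phi(r)\sim c\sqrt{r}$ and $\phi'(r)\to 0$ at infinity, so $\cc H_{g_{\infty}}=3\phi'(r)\to 0$ as $r\to \infty$. Because $\cc H$ is scale-invariant, $\cc H(x,t_{j})=\cc H_{g_{j}(0)}(x)$; the smooth Cheeger-Gromov convergence $\Phi_{j}^{\ast}g_{\infty}\to g_{j}(0)$ on a fixed compact neighbourhood of $x$ then yields $\lvert \cc H_{g_{j}(0)}(x) - \cc H_{g_{\infty}}(\Phi_{j}(x))\rvert\to 0$, and combined with the decay above this forces $\cc H(x,t_{j})\to 0$. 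The first step then delivers $\cc H(x,T)=0$.

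The main technical subtlety is the simultaneous use of Cheeger-Gromov convergence (uniform only on compact subsets of the base) together with the non-compact escape of $\Phi_{j}(x)$ in the Bryant target. I would handle this by fixing a small compact neighbourhood $K\subset \R^{4}$ of $x$ in the base on which $\Phi_{j}^{\ast}g_{\infty}\to g_{j}(0)$ in every $C^{k}$ for $j$ large, and then using the identity $\cc H_{\Phi_{j}^{\ast}g_{\infty}}(x) = \cc H_{g_{\infty}}(\Phi_{j}(x))$ to transfer the spatial decay of $\cc H$ at infinity in $M_{\infty}$ back to the base manifold.
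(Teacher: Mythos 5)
Your first step (existence of the pointwise limit $\lim_{t\nearrow T}\cc H(x,t)$ for $x>\bar{x}$) matches the paper's and is correct: the uniform curvature bound from \eqref{normsquaredcurvatureboundedbyc} on the annulus where $\bb$ stays bounded below gives $C^\infty_{\text{loc}}$-control outside $B(\origin,\bar{x})$, so the limit exists. The genuine gap lies in the second step.

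You want to transfer the spatial decay of $\cc H_{g_\infty}$ at infinity on the Bryant soliton to the fixed base point $x$ via Cheeger--Gromov convergence and scale invariance. But the Cheeger--Gromov convergence $(\R^4,g_j(0),\origin)\to(\R^4,g_\infty(0),p_\infty)$ only provides diffeomorphisms $\Phi_j$ defined on an exhaustion by compact sets of the \emph{limit} $M_\infty$, with $\Phi_j^{\ast}g_j(0)\to g_\infty(0)$ in $C^\infty_{\text{loc}}$ \emph{on $M_\infty$}. As you yourself note, $d_{g_j(0)}(\origin,x)\to\infty$, so the preimage of $x$ under the $\Phi_j$'s escapes every fixed compact set of $M_\infty$. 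Consequently the convergence of $g_j(0)$ to $g_\infty(0)$ gives no uniform control at $x$: the approximation error of the Cheeger--Gromov convergence on the compact exhaustion $K_m\subset M_\infty$ is only small for $j\geq j_0(m)$, and a priori $j_0(m)$ may grow much faster than the rate at which $\Phi_j^{-1}(x)$ escapes. Your proposed remedy (``fixing a small compact neighbourhood $K\subset\R^{4}$ of $x$ on which $\Phi_j^{\ast}g_\infty\to g_j(0)$'') is not what Cheeger--Gromov convergence gives; it controls $g_j$ on $\Phi_j(K)$ for $K\subset M_\infty$ compact, which are sets that shrink to the origin in the unrescaled metric and thus never reach the fixed $x>\bar{x}$. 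Without a further argument showing the blow-up limit is attained \emph{uniformly on the base} at the relevant scales, the identity $\cc H(x,t_j)=\cc H_{g_j(0)}(x)\approx \cc H_{g_\infty}(\Phi_j^{-1}(x))$ is unjustified.

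The paper avoids this by arguing entirely inside the blow-up limit. It supposes $\cc H(x_0,t)\not\to 0$ for some $x_0>\bar{x}$; a maximum-principle argument as in Corollary \ref{lowerboundH} then shows $\cc H\geq\mu>0$ on all of $B(\origin,x_0)$ for times near $T$, which combined with Lemma \ref{estimatesforGin} gives a lower bound on $\bb_s$ inside the singular region. Passing to the blow-up limit along \emph{any} sequence (not just the one producing the Bryant soliton) and using that $\Phi_j(q)\in B(\origin,x_0)$ by Lemma \ref{bphihgoesto0true}, this forces $\partial_y\phi_\infty\geq\mu/6$ everywhere in the rotationally symmetric limit, hence Euclidean volume growth, contradicting Perelman's theorem that a non-compact $\kappa$-solution cannot have positive asymptotic volume ratio. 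The key structural difference is that the paper's contradiction lives on compact sets of $M_\infty$ (where Cheeger--Gromov convergence is actually uniform), whereas your argument tries to evaluate the convergence at a point escaping to infinity.
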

\begin{proof}[Proof of Claim \ref{claimderivativezero}]
We prove the Claim by a blow-up argument. Namely, we show that if the statement was false, then any singularity model would have Euclidean volume growth, thus leading to a contradiction.
\\Since by \eqref{normsquaredcurvatureboundedbyc} the curvature is uniformly controlled in time for any radius $x > \bar{x}$ by some positive constant only depending on $x$, the same argument in the proof of Lemma \ref{definitionxtilde} shows that the limit $\lim_{t\nearrow T}\cc H(x,t)$ is well defined and finite (and nonnegative by Lemma \ref{cruciallemma}) for any $x > \bar{x}$.
Suppose that there exists $x_{0} > \bar{x}$ such that $\lim_{t\nearrow T}\cc H(x_{0},t) > 0$. Then the same argument in Corollary \ref{lowerboundH} implies that $\cc H$ is uniformly bounded from below by some $\mu > 0$ on the ball $B(\origin,x_{0})$ for times close to $T$. In particular, by Lemma \ref{estimatesforGin} there exists $\alpha > 0$ satisfying
\[
\bb_{s} \geq \frac{1}{3}\left(\mu \frac{\bb}{\cc} - \alpha\bb\right)
\]
\noindent on $B(\origin,x_{0})$ for times close enough to $T$. Let us rescale the solution along a blow-up sequence and let $(M_{\infty},g_{\infty}(t),p_{\infty})$ be the associated singularity model. We note that by Lemma \ref{bphihgoesto0true} if $q\in M_{\infty}$, then $\Phi_{j}(q)\in B(\origin,x_{0})$ for $j$ large enough. Moreover, we have 
\[
\bb_{s}(\Phi_{j}(q),t_{j}) \geq \frac{1}{3}\left(\mu \frac{\bb}{\cc} - \alpha\bb\right)(\Phi_{j}(q),t_{j}) \geq \frac{1}{6}\mu.
\]
\noindent Thus, from Corollary \ref{convergenceY1inB1} and Lemma \ref{rotationalsymmetrylimit} we derive that
\begin{align*}
(1 - (\partial_{y}\phi_{\infty})^{2})(q,0) &= (\phi_{\infty}^{2}k_{12}^{\infty})(q,0) = \lim_{j\rightarrow \infty}\left(4 - \bb_{s}^{2} - 3\frac{\cc^{2}}{\bb^{2}}\right)(\Phi_{j}(q),t_{j}) \\ &= \lim_{j\rightarrow \infty}\left(1 - \bb_{s}^{2}\right)(\Phi_{j}(q),t_{j}) \leq 1 - \frac{1}{36}\mu^{2},
\end{align*}
\noindent which then implies $\partial_{y}\phi_{\infty}(q,0) \geq \mu/6$ for any $q\in M_{\infty}$. Since the limit is rotationally symmetric we obtain
\begin{equation}\label{positiveAVReqn}
\text{Vol}_{g_{\infty}(0)}(B_{g_{\infty}(0)}(p_{\infty},r)) \geq \alpha r^{4},
\end{equation}
\noindent for any $r\geq 1$ and for some $\alpha > 0$. By Proposition \ref{classificationsingularity} and the bound \eqref{positiveAVReqn} we conclude that $(M_{\infty},g_{\infty}(t))$ is a non-compact $\kappa$-solution with \emph{positive asymptotic volume ratio}. According to a rigidity property proved by Perelman \cite[Proposition 11.4]{pseudolocality} $g_{\infty}(t)$ must then be flat, which is a contradiction.
\end{proof}
\vspace{0.1in}
Since by Claim \ref{claimderivativezero} $(\bb^{2}\cc)_{s}(x,t) = \bb^{2}\cc H(x,t) \rightarrow 0$ as $t\nearrow T$ for all $x > \bar{x}$ we can argue as in the proof of Lemma \ref{definitionxtilde} and deduce that there exists $\gamma > 0$ such that 
\begin{equation}\label{redefinitiongamma}
\lim_{t\nearrow T}\bb^{2}\cc(x,t)= \gamma,\,\,\,\,\,\,\, \forall x > \bar{x}.
\end{equation}
\noindent We now show that if $\bb$ is small at $\bar{x}$ for times close to $T$, then $\bb$ cannot jump to some positive quantity $\gamma^{1/3}$ for all $x > \bar{x}$ when $t\nearrow T$. Let $\varepsilon < 1$ and $T_{\varepsilon} < T$ to be chosen below such that $\bb^{2}(\bar{x},t) \leq \varepsilon/2$ for all $t\in [T_{\varepsilon},T)$. We let $x_{\varepsilon} > \bar{x}$ be such that $\bb^{2}(x_{\varepsilon},T_{\varepsilon}) \leq \varepsilon$ and $\tilde{T}$ be the first time larger than $T_{\varepsilon}$ such that $\bb(x_{\varepsilon},\tilde{T}) = 1$, if such time exists. By Lemma \ref{bssquaredminus1} and Lemma \ref{estimatelog} we have
\begin{align*}
\partial_{t}\bb^{2}(x_{\varepsilon},t) & = 2\left(\bb\bb_{ss} + \frac{\bb\bb_{s}\cc_{s}}{\cc} + \bb_{s}^{2} + 2\frac{\cc^{2}}{\bb^{2}} - 4\right)(x_{\varepsilon},t) \leq 2(\bb\bb_{ss} + \alpha\bb)(x_{\varepsilon},t) \\ &\leq \alpha\left(\frac{1}{\lvert \text{log}\bb\rvert} + \bb\right)(x_{\varepsilon},t) \leq \frac{\alpha}{\lvert \text{log}\bb\rvert}(x_{\varepsilon},t),
\end{align*}
\noindent for some $\alpha > 0$ independent of $\varepsilon$ and $t$ and for all $t\in [T_{\varepsilon},\tilde{T}]$. Thus, we can integrate the previous inequality and obtain
\[
\bb^{2}(x_{\varepsilon},t)\left(2\lvert\text{log}(\bb(x_{\varepsilon},t))\rvert + 1\right) \leq \alpha (t-T_{\varepsilon}) + \bb^{2}(x_{\varepsilon},T_{\varepsilon})\left(2\lvert\text{log}(\bb(x_{\varepsilon},T_{\varepsilon}))\rvert + 1 \right).
\] 
\noindent Therefore, since $\bb^{2}(x_{\varepsilon},T_{\varepsilon}) \leq \varepsilon < 1$ we get
\[
\bb^{2}(x_{\varepsilon},t) \leq \alpha(T-T_{\varepsilon}) + 3\varepsilon.
\]
\noindent Once we choose $\varepsilon$ and $T_{\varepsilon}$ accordingly, we derive that $\tilde{T}$ does not exist and hence that $\bb^{2}(x_{\varepsilon},t)\leq \gamma^{\frac{2}{3}}/4$ for all $t\in [T_{\varepsilon},T)$. We then find
\[
\bb^{2}\cc(x_{\varepsilon},t) \leq \bb^{3}(x_{\varepsilon},t) \leq \gamma/8,
\]
\noindent which contradicts \eqref{redefinitiongamma}. Therefore $\Omega = \R^{4}$.

\emph{(iii) Type-I blow-up at infinity.} Once we know that the singularity is global it is natural to expect shrinking cylinders to appear when dilating the solution at infinity. 
\\Let $t_{j}\nearrow T$, $\delta > 0$ arbitrary and $\varepsilon > 0$ a positive quantity to be chosen below. Since the spatial derivatives of $\bb$ and $\cc$ are decaying to zero at infinity for any $t\geq T/2$, we may always pick points $p_{j}$ such that $d_{g_{0}}(\origin,p_{j})\rightarrow \infty$ and 
\begin{equation}\label{controlinfinitypj}
\sup_{B_{g(t)}(p_{j},\delta)}\left(\lvert k_{01}\rvert + \lvert k_{03}\rvert + \left\vert\frac{\bb_{s}}{\bb}\right\vert + \left\vert\frac{\cc_{s}}{\cc}\right\vert\right) \leq \varepsilon,
\end{equation}
\noindent for all $t\in[T/2, T_{j}\doteq (T+t_{j})/2]$. Let us denote the factors $R_{g(t_{j})}(p_{j})$ by $\lambda_{j}$. From \eqref{controlinfinitypj} we derive $\lambda_{j}\bb^{2}(p_{j},t_{j})\geq \beta > 0$ uniformly with respect to $j$. Since by Corollary \ref{cadmitslimit} and part (ii) of Theorem \ref{maintheoremtype2bis} $\bb^{2}(\cdot,t) \leq \alpha(T-t)$ for some uniform constant $\alpha > 0$, we also see that $\lambda_{j} \rightarrow \infty$. Similarly, by \eqref{controlinfinitypj} we find that $\partial_{t}\bb^{2}(\cdot,t)\leq -\alpha < 0$ in $B_{g(t)}(p_{j},\delta)$ for all $t\in[T/2, T_{j}]$. Therefore we have
\[
(T-t_{j})\lambda_{j} = 2(T_{j} - t_{j})\lambda_{j} \leq \frac{\alpha}{\bb^{2}(p_{j},t_{j})}(T_{j}-t_{j}) \leq \frac{\alpha(T_{j}-t_{j})}{\bb^{2}(p_{j},T_{j}) + \alpha(T_{j}-t_{j})} \leq \alpha,
\]
\noindent where we have used \eqref{normsquaredcurvatureboundedbyc}. Analogously, given $\nu > 0$, $t\leq 0$ and $p\in B_{g(t_{j})}(p_{j},\nu(\lambda_{j})^{-1/2})$ we see that
\[
\lambda_{j}\bb^{2}(p,t_{j}+ (\lambda_{j})^{-1}t) \geq \lambda_{j}\bb^{2}(p,t_{j}) + \alpha\lvert t\rvert,
\]
\noindent for $j$ large enough. From \eqref{controlinfinitypj} we also derive the following spatial control:
\[
\left\vert\text{log}\left(\frac{\bb(p,t_{j})}{\bb(p_{j},t_{j})}\right)\right\vert \leq \varepsilon\frac{\nu}{\sqrt{\lambda_{j}}}.
\]
\noindent We may finally estimate the curvature of the rescaled Ricci flows as 
\begin{align*}
\frac{1}{\lambda_{j}}\lvert\text{Rm}_{g(t_{j}+(\lambda_{j})^{-1}t)}\rvert(p)&\leq \frac{\alpha}{\lambda_{j}\bb^{2}(p,t_{j}+(\lambda_{j})^{-1}t))} \leq \frac{\alpha}{\lambda_{j}\bb^{2}(p,t_{j}) + \alpha\lvert t\rvert} \\ &\leq \frac{\alpha}{\beta\exp(-2\frac{\varepsilon\nu}{\sqrt{\lambda_{j}}}) + \alpha\lvert t\rvert}.
\end{align*}
\noindent Since the flow is weakly $\kappa$-non-collapsed for some $\kappa > 0$ we may apply Hamilton's compactness theorem and conclude that the sequence of rescaled Ricci flows converge in the pointed Cheeger-Gromov sense to a singularity model $(M_{\infty},g_{\infty},p_{\infty})_{-\infty < t \leq 0}$ to which the classification in Proposition \ref{classificationsingularity} applies. In particular, $g_{\infty}(t)$ is of the form \eqref{sphericalsymmetrylimitformula}. Arguing as in the proof of Claim \ref{claimderivativezero} and using \eqref{controlinfinitypj} we see that 
\begin{align*}
(1 - (\partial_{y}\phi_{\infty})^{2})(q,0) &= (\phi_{\infty}^{2}k_{12}^{\infty})(q,0) = \lim_{j\rightarrow \infty}\left(4 - \bb_{s}^{2} - 3\frac{\cc^{2}}{\bb^{2}}\right)(\Phi_{j}(q),t_{j}) \\ &= \lim_{j\rightarrow \infty}\left(1 - \bb_{s}^{2}\right)(\Phi_{j}(q),t_{j}) \geq 1 - (\sup_{\R^{4}}\bb^{2}(\cdot,0))\varepsilon^{2} \geq \frac{1}{2},
\end{align*}
\noindent for $\varepsilon$ small enough, where we have used the fact that $\bb$ is uniformly bounded from above. We finally conclude that $\lvert\partial_{y}\phi_{\infty}\rvert < 1/2$ which by the boundary conditions \eqref{smoothnessorigin} and the classification in Proposition \ref{classificationsingularity} implies that $(M_{\infty},g_{\infty}(t))$ is the self-similar shrinking soliton on $\R\times S^{3}$.

\emph{(iv) Classification of singularity models.} According to Proposition \ref{classificationsingularity} if the singularity model is not a family of shrinking cylinders, then it must be a positively curved rotationally symmetric $\kappa$-solution. By the recent classification in \cite{brendle2} we conclude that in this case the singularity model is isometric to the Bryant soliton (up to scaling).
\end{proof}
\subsection{Immortal warped Berger Ricci flows.}
Let $(\R^{4},g(t))_{0\leq t < T}$ be the maximal Ricci flow starting at some $g_{0}\in\Gin$ and suppose that $T < \infty$. 
\begin{proof}[Proof of Theorem \ref{mainimmortalresult}]
Let $\tilde{x}_{2}$, $\tilde{t}$ and $\mu$ be given by Corollary \ref{lowerboundH} and consider a blow-up sequence giving rise to a singularity model $(M_{\infty},g_{\infty}(t),p_{\infty})$. Since by Lemma \ref{claimzozzo} the rescaled geodesic balls are included in $B(\origin,\tilde{x}_{2})$ for $j$ large enough, we can argue exactly as in the proof of Claim \ref{claimderivativezero} and deduce that any singularity model for the flow is in fact flat. This shows that the maximal time of existence cannot be finite.
\end{proof}
\subsection{Type-I Berger Ricci flows contain minimal 3-spheres.}
The existence of sufficiently pinched minimal embedded hyperspheres gives rise to Type-I singularities for (asymptotically flat) rotationally symmetric Ricci flows on $\R^{n}$ \cite{work}. Thus, in general we cannot extend the conclusions of Theorem \ref{maintheoremtype2} and Theorem \ref{mainimmortalresult} to include initial data containing minimal $3$-spheres.
\\While in the SO$(n)$-invariant case no minimal spheres can appear along the flow, in the SU(2)-cohomogeneity 1 setting an analogous property might fail. On the other hand, minimal spheres can disappear in finite time \cite[Proposition 1.7]{work}. 
\\In the following we consider a Type-I warped Berger Ricci flow whose curvature is controlled at spatial infinity uniformly in time. A priori one might expect that there exist examples of Type-I singularities where both $\bb$ and $\cc$ have local minima while the mean curvature of the embedded hyperspheres remains positive. The next result rules out this possibility. We prove that for times close to the maximal time of existence a Type-I warped Berger Ricci flow solution $(\R^{4},g(t))$ must contain minimal $3$-spheres.  
\begin{proof}[Proof of Theorem \ref{maintypeI}] The decay of the curvature and the lower bound for the injectivity radius ensure that \eqref{definitionradiusrho} holds for some sufficiently large radius $\rho$. Therefore Lemma \ref{estimatesforGin} and hence the classification of singularity models in Proposition \ref{classificationsingularity} are satisfied in this setting (see also Remark \ref{classificationextends}). A first consequence of this fact is that the same argument in the proof of Theorem \ref{maintheoremtype2} shows that if the origin is in the singular set (as defined in \cite{type1}), then the singularity cannot be Type-I. 
\\Therefore we only need to consider the case where the curvature stays uniformly bounded in a Euclidean ball $B(\origin,2r)$ for some $r > 0$. Accordingly, there exists $\varepsilon > 0$ such that $\bb(r,t)\geq \cc(r,t)\geq \varepsilon$ for any $t\in [0,T)$. 
\\Assume for a contradiction that there exists a sequence $t_{j}\nearrow T$ such that the mean curvature of hyperspheres $H(\cdot,t_{j})$ is strictly positive on the time slices $\R^{4}\setminus \{\origin\}\times \{t_{j}\}$. From the identity $H = (\log(\bb^{2}\cc))_{s}$ we deduce that 
\begin{equation}\label{inequalityzozza}
\bb^{2}\cc(x,t_{j}) \geq \bb^{2}\cc(r,t_{j})\geq \varepsilon^{3},
\end{equation}
\noindent for any $j$ and for any $x\geq r$. Since Corollary \ref{cadmitslimit} holds in this setting, the singular set contains a point $p\in\R^{4}\setminus B(\origin,2r)$ such that $\bb(p,t) \rightarrow 0$ as $t\nearrow T$ and similarly for $\cc$ by Lemma \ref{rotationalsymmetry}. For if such $p$ did not exist, then by the first estimate in Lemma \ref{estimatesforGin} applied to the region $B(\origin,\rho)\setminus B(\origin,r)\times [0,T)$, the curvature would be bounded as $t\nearrow T$ which is not possible by \cite{shi}. This contradicts the inequalities in \eqref{inequalityzozza}.
\end{proof}
\begin{remark}
The proof actually shows that $H$ has to change sign for times close to the maximal time of existence. Equivalently, the Ricci flow solution contains neck-like regions that pinch off in finite time at a Type-I rate.
\end{remark}
\section{Some applications}
In this section we provide two simple applications of the main results. On the one hand we rule out the existence of Taub-NUT like shrinking solitons on $\R^{4}$. On the other hand, we completely classify Ricci flows of nonnegatively curved warped Berger metrics. 
\subsection{Non existence of Taub-NUT like shrinking solitons.}
Theorem \ref{maintheoremtype2} immediately implies that there are no warped Berger shrinking solitons with no necks and bounded by a round cylinder at infinity. Namely, we have the following 
\begin{corollaryy}\label{corollaryshrinking1}
The set $\G$ does not contain shrinking Ricci solitons.
\end{corollaryy}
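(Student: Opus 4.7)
The plan is to contradict Theorem \ref{maintheoremtype2} by observing that any shrinking Ricci soliton produces a self-similar Ricci flow, which is in particular of Type-I, whereas Theorem \ref{maintheoremtype2} forces Type-II behaviour for every $g_0\in\G$.

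Concretely, suppose for contradiction that some $g_0\in\G$ is a shrinking Ricci soliton, so that there exist a complete vector field $X$ on $\R^{4}$ and $\lambda>0$ with
\[
\text{Ric}_{g_0} + \tfrac{1}{2}\mathcal{L}_X g_0 = \lambda\, g_0.
\]
The standard canonical form for soliton flows then produces diffeomorphisms $\phi_t$ of $\R^{4}$ with $\phi_0=\text{id}$ such that
\[
g(t) \doteq (1-2\lambda t)\, \phi_t^{\ast} g_0, \qquad t\in[0,(2\lambda)^{-1}),
\]
is a Ricci flow starting at $g_0$, complete with bounded curvature on every compact sub-interval of $[0,(2\lambda)^{-1})$. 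By the Chen-Zhu uniqueness theorem, $g(t)$ coincides with the maximal complete bounded curvature Ricci flow evolving from $g_0$, and the maximal time of existence is $T=(2\lambda)^{-1}<\infty$.

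Since pullback by diffeomorphisms is an isometry and the norm of the Riemann tensor scales by $c^{-1}$ under a constant rescaling of the metric by a factor $c>0$, the self-similar expression above gives
\[
(T-t)\,\sup_{\R^{4}}\lvert\text{Rm}_{g(t)}\rvert_{g(t)} \;=\; \frac{1}{2\lambda}\,\sup_{\R^{4}}\lvert\text{Rm}_{g_0}\rvert_{g_0},
\]
and the right-hand side is finite because $g_0\in\G$ has bounded curvature. Therefore $g(t)$ develops a Type-I singularity at $T$, directly contradicting Theorem \ref{maintheoremtype2}.

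The main (and essentially only) subtlety is justifying the canonical form, namely the completeness of the vector field $X/(1-2\lambda t)$ generating $\phi_t$ on $[0,T)$. For warped Berger initial data this is not an obstacle: the Ricci flow uniqueness combined with the soliton identity forces $X$ to be SU(2)-invariant and, by the Berger warped structure, to reduce to a radial (gradient) vector field whose components are controlled by $\bb$, $\cc$ and their first spatial derivatives, all of which are bounded on $\R^{4}$ since $g_0\in\G$. Hence $X$ generates a complete flow on $\R^{4}$, the self-similar representation is well defined, and the Type-I/Type-II contradiction is genuine.
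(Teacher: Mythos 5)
Your approach is exactly the paper's: the corollary is treated as an immediate consequence of Theorem \ref{maintheoremtype2} because a shrinking soliton in $\G$ would generate a self-similar, hence Type-I, Ricci flow (and by Chen--Zhu uniqueness that flow is the maximal one), contradicting the Type-II conclusion. The scaling computation $(T-t)\sup|\mathrm{Rm}_{g(t)}| = (2\lambda)^{-1}\sup|\mathrm{Rm}_{g_0}| < \infty$ is correct and is the heart of the matter.

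One small but genuine slip in your discussion of the canonical form: you assert that the radial component of the soliton field $X$ is \emph{bounded} because $\bb,\cc,\bb_s,\cc_s$ are bounded for $g_0\in\G$. That is not right. For a $\mathrm{U}(2)$-invariant gradient shrinker with potential $f$, the radial second-order equation reads $f_{ss} = \lambda + 2\bb_{ss}/\bb + \cc_{ss}/\cc$, and since the curvature terms are bounded while $\lambda>0$, one gets $f_s \sim \lambda s$, i.e.\ \emph{linear} growth of $|X|$, not boundedness. Fortunately linear growth still implies completeness of the integral curves (standard Gronwall/comparison argument), so your conclusion stands; only the stated bound is wrong. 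It would also be cleaner to invoke the well-known facts (Naber; Zhang) that a complete shrinking soliton with bounded curvature can be taken to be gradient and that the gradient field of a complete gradient soliton is automatically complete, rather than reconstructing completeness from the warped structure.
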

Recently, Appleton found non-collapsed Taub-NUT like gradient \emph{steady} solitons on $\R^{4}$ \cite{appleton}. It is straightforward to check that these solitons belong to $\Gin$. Indeed the curvature decays linearly at spatial infinity and both the warping functions $\bb$ and $\cc$ are increasing in space. According to Theorem \ref{mainimmortalresult} there are no shrinking solitons on $\R^{4}$ analogous to the steady ones constructed in \cite{appleton}. More precisely, we have shown the following:
\begin{corollaryy}\label{corollaryshrinking2}
The set $\Gin$ does not contain shrinking Ricci solitons. 
\end{corollaryy}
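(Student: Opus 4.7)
The strategy is to contradict Theorem \ref{mainimmortalresult} directly. Suppose for contradiction that some $g_{0}\in\Gin$ is a complete gradient shrinking Ricci soliton, i.e.\ there exist a smooth potential $f$ on $\R^{4}$ and $\tau_{0}>0$ satisfying
\[
\text{Ric}_{g_{0}}+\text{Hess}_{g_{0}}f=\frac{1}{2\tau_{0}}\,g_{0}.
\]
I want to produce the canonical self-similar Ricci flow associated with $g_{0}$, show that it has finite maximal time, and invoke the theorem to get a contradiction.

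First I would observe that $g_{0}$ has bounded curvature: by condition (ii) of Definition \ref{definitionGin}, $\lvert\text{Rm}_{g_{0}}\rvert_{g_{0}}(s)\to 0$ as $s\to\infty$, and combined with smoothness this forces $\sup_{\R^{4}}\lvert\text{Rm}_{g_{0}}\rvert_{g_{0}}<\infty$. Next, from the soliton structure one obtains the canonical self-similar Ricci flow
\[
g(t)=(1-t/\tau_{0})\,\phi_{t}^{*}g_{0},\qquad t\in[0,\tau_{0}),
\]
where $\phi_{t}$ is the flow of the time-dependent vector field $(1-t/\tau_{0})^{-1}\nabla^{g_{0}}f$. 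This solution is complete with bounded curvature on each time slice, and $\sup_{\R^{4}}\lvert\text{Rm}_{g(t)}\rvert_{g(t)}$ blows up like $(1-t/\tau_{0})^{-1}$ as $t\nearrow\tau_{0}$. Invoking the Chen--Zhu uniqueness for complete bounded curvature Ricci flows (cited in the introduction), $g(t)$ must coincide with the maximal complete bounded curvature solution starting at $g_{0}$, whose maximal time of existence is therefore $T=\tau_{0}<\infty$. This directly contradicts Theorem \ref{mainimmortalresult}, which asserts that any such maximal solution starting in $\Gin$ is immortal.

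The only real subtlety I anticipate is justifying that the ansatz above defines a genuine smooth Ricci flow on $[0,\tau_{0})$, which reduces to showing that $\nabla^{g_{0}}f$ generates a global one-parameter family of diffeomorphisms of $\R^{4}$. This is standard for complete shrinking solitons: the Hamilton identity $R+\lvert\nabla f\rvert^{2}-f/\tau_{0}\equiv\text{const}$, together with the uniform curvature bound derived above, implies that $\lvert\nabla f\rvert$ has at most linear growth, which guarantees completeness of its flow. With this in place no further analysis is required, and the corollary follows at once.
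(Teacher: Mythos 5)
Your proof is correct and takes the same route the paper itself relies on (though the paper leaves the argument almost entirely implicit, pointing only to Theorem \ref{mainimmortalresult}): a shrinker in $\Gin$ would generate, via the canonical self-similar Ricci flow, a complete bounded-curvature solution with finite singular time, contradicting the immortality asserted by Theorem \ref{mainimmortalresult}. You supply the standard supporting details --- boundedness of $\lvert\text{Rm}_{g_0}\rvert$, bounded curvature on each time slice, completeness of the flow of $\nabla^{g_0}f$ via the Hamilton identity and linear growth of $\lvert\nabla f\rvert$, and Chen--Zhu uniqueness --- which the paper does not spell out. One small point worth making explicit: the flat Gaussian shrinker does formally lie in $\Gin$, but its canonical self-similar flow is stationary rather than singular, so as is customary both the corollary and your argument should be read as concerning \emph{non-trivial} (non-flat) shrinkers; your proof already implicitly assumes this at the step where you assert that $\sup_{\R^{4}}\lvert\text{Rm}_{g(t)}\rvert_{g(t)}$ blows up like $(1-t/\tau_{0})^{-1}$, which fails precisely in the flat case.
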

We note that by \cite{kotschwar} it is known that there do not exist complete non-trivial \emph{rotationally invariant} shrinking soliton structures on $\R^{4}$.

\subsection{Ricci flow of Berger metrics with nonnegative curvature.}
By combining Theorem \ref{maintheoremtype2} and Theorem \ref{mainimmortalresult} we are able to classify Ricci flows evolving from complete warped Berger metrics with bounded nonnegative curvature operator. 
We recall that by \cite{positivecurvature} the curvature operator stays nonnegative along the Ricci flow in any dimension. 
\begin{proof}[Proof of Corollary \ref{mainpositiveresult}] If $g_{0}$ is a complete warped Berger metric with bounded nonnegative curvature, then the injectivity radius of $g_{0}$ is positive and $\bb_{ss} \leq 0$ and $\cc_{ss}\leq 0$. By completeness the latter condition implies that both $\bb_{s}$ and $\cc_{s}$ are nonnegative. Thus there exists a positive (possibly infinite) quantity $\mu \doteq \lim_{x\rightarrow \infty}\bb(x,0)$. 

If $\mu < \infty$, i.e. $\bb(\cdot,0)$ is bounded on $\R^{4}$, then $g_{0}$ belongs to $\G$ and the conclusions of Theorem \ref{maintheoremtype2} and Theorem \ref{maintheoremtype2bis} apply. In particular, there exists a sequence $t_{j}\nearrow T$ such that the rescaled Ricci flows $(\R^{4},g_{j}(t),\origin)$ defined by $g_{j}(t)\doteq R_{g(t_{j})}(\origin)g(t_{j} + (R_{g(t_{j})}(\origin))^{-1}t)$ converge to the Bryant soliton in the Cheeger-Gromov sense. Given any other sequence $\tilde{t}_{j}\nearrow T$, from the trace of the Harnack estimate \cite{harnack} we derive that
\[
R(\origin,\tilde{t}_{j}) \geq \frac{t_{j}}{\tilde{t}_{j}}R(\origin,t_{j}),
\]
\noindent up to reordering the indices. Therefore we conclude that dilations of the Ricci flow by factors $R(\origin,\tilde{t}_{j})$ still give rise to the Bryant soliton.

If $\mu = \infty$, then consider the Ricci flow solution starting at $g_{0}$ and pick $0 < t_{0} < T$. The vertical sectional curvatures decay to zero at infinity being the spatial derivatives $\bb_{s}(\cdot,t_{0})$ and $\cc_{s}(\cdot,t_{0})$ decreasing and nonnegative. In particular $\bb_{ss}/\bb(\cdot,t_{0})$ (and $\cc_{ss}/\cc(\cdot,t_{0})$ as well) is integrable. By the same argument we used to prove Claim \ref{secondderivativesdecay} we get $\bb_{ss}/\bb(x,t_{0})\rightarrow 0$ at infinity and similarly for $\cc_{ss}/\cc(x,t_{0})$. Therefore $g(t_{0})\in\Gin$ and we can apply Theorem \ref{mainimmortalresult}.

\end{proof}
\bibliographystyle{alpha}
\bibliography{referenceswarped2}

\newcommand{\etalchar}[1]{$^{#1}$}
\begin{thebibliography}{CCG{\etalchar{+}}08}

\bibitem[AIK15]{degenerateneck}
S.~Angenent, J.~Isenberg, and D.~Knopf.
\newblock \emph{Degenerate neckpinches in Ricci flow}.
\newblock {\em \emph{J. Reine Angew. Math.}}, (\textbf{709}):81--117, 2015.

\bibitem[AK04]{exampleneck}
S.~Angenent and D.~Knopf.
\newblock \emph{An example of neckpinching for {R}icci Flow on ${S}^{n+1}$}.
\newblock {\em \emph{Math. Res. Lett.}}, \textbf{11}, 07 2004.

\bibitem[App18]{appleton}
A.~Appleton.
\newblock \emph{A family of non-collapsed steady {R}icci solitons in even
  dimensions greater or equal to four}.
\newblock {\em \emph{arXiv preprint arXiv:1708.00161}}, 2018.

\bibitem[App19]{appleton2}
A.~Appleton.
\newblock \emph{Eguchi-{H}anson singularities in {U}(2)-invariant {R}icci
  flow}.
\newblock {\em \emph{arXiv preprint arXiv:1903.09936 }}, 2019.

\bibitem[Ban87]{bando}
S.~Bando.
\newblock \emph{Real analyticity of solutions of {H}amilton's equation}.
\newblock {\em \emph{Math. Z.}}, \textbf{195}(1):93--97, 1987.

\bibitem[BK16]{bettiol}
R.~G. Bettiol and A.~M. Krishnan.
\newblock \emph{Four-dimensional cohomogeneity one {R}icci flow and nonnegative
  sectional curvature}.
\newblock {\em \emph{arXiv preprint arXiv:1606.00778}}, 2016.

\bibitem[BL18]{bohm}
C.~B{\"o}hm and R.~A. Lafuente.
\newblock \emph{Immortal homogeneous {R}icci flows}.
\newblock {\em \emph{Invent. Math.}}, \textbf{212}(2):461--529, 2018.

\bibitem[Bre14]{brendle3}
S.~Brendle.
\newblock \emph{Rotational symmetry of Ricci solitons in higher dimensions}.
\newblock {\em \emph{J. Differential. Geom.}}, \textbf{97}(2):191--214, 2014.

\bibitem[Bre18]{brendle}
S.~Brendle.
\newblock \emph{Ancient solutions to the {R}icci flow in dimension 3}.
\newblock {\em \emph{arXiv preprint arXiv:1811.02559}}, 2018.

\bibitem[Bry05]{bryant}
R.~L. Bryant.
\newblock \emph{Ricci flow solitons in dimension three with SO(3)-symmetries}.
\newblock
  \url{https://services.math.duke.edu/~bryant/3DRotSymRicciSolitons.pdf}, 2005.

\bibitem[CCG{\etalchar{+}}07]{ricciflowtechniques1}
B.~Chow, S.~Chu, D.~Glickenstein, C.~Guenther, J.~Isenberg, T.~Ivey, D.~Knopf,
  P.~Lu, F.~Luo, and L.~Ni.
\newblock \emph{The {R}icci Flow: Techniques and Applications-Part {I}:
  Geometric Aspects}.
\newblock {\em \emph{Mathematical Surveys and Monographs}}, 135, 2007.

\bibitem[CCG{\etalchar{+}}08]{ricciflowtechniques2}
B.~Chow, S.~Chu, D.~Glickenstein, C.~Guenther, J.~Isenberg, T.~Ivey, D.~Knopf,
  P.~Lu, F.~Luo, and L.~Ni.
\newblock \emph{The {R}icci Flow: Techniques and Applications Part {II}:
  Analytic Aspects}.
\newblock {\em \emph{Mathematical Surveys and Monographs}}, 144, 2008.

\bibitem[Che09]{chen2009}
B.-L. Chen.
\newblock \emph{Strong uniqueness of the {R}icci flow}.
\newblock {\em \emph{J. Differential Geom.}}, \textbf{82}(2):363--382, 06 2009.

\bibitem[CK04]{IRF}
B.~Chow and D.~Knopf.
\newblock {\em The {R}icci flow: an introduction}, volume~1.
\newblock American Mathematical Soc., 2004.

\bibitem[CLN06]{hamiltonricciflow}
B.~Chow, P.~Lu, and L.~Ni.
\newblock {\em Hamilton's {R}icci flow}, volume \textbf{77} of {\em Graduate
  studies in Mathematics}.
\newblock AMS, 2006.

\bibitem[CRW15]{cabezas}
E.~Cabezas-Rivas and B.~Wilking.
\newblock \emph{How to produce a {R}icci flow via {C}heeger--{G}romoll
  exhaustion}.
\newblock {\em \emph{J. Eur. Math. Soc.}}, \textbf{17}(12):3153--3194, 2015.

\bibitem[CTY11]{pseudolocalityapplication}
A.~Chau, L.-F. Tam, and C.~Yu.
\newblock \emph{Pseudolocality for the {R}icci flow and applications}.
\newblock {\em \emph{Canad. J. Math.}}, \textbf{63}:55--85, 2011.

\bibitem[CZ06]{uniqueness}
B.-L. Chen and X.-P. Zhu.
\newblock \emph{Uniqueness of the {R}icci flow on complete noncompact
  manifolds}.
\newblock {\em \emph{J. Differential Geom.}}, \textbf{74}(1):119--154, 2006.

\bibitem[DG19]{work}
F.~Di~Giovanni.
\newblock \emph{Rotationally symmetric {R}icci flow on $\mathbb{R}^{n+1}$}.
\newblock {\em \emph{arXiv preprint arXiv:1904.09555}}, 2019.

\bibitem[EMT11]{type1}
J.~Enders, R.~M{\"u}ller, and P.~M. Topping.
\newblock \emph{On Type-{I} singularities in {R}icci flow}.
\newblock {\em \emph{Comm. Anal. Geom.}}, \textbf{19}(5):905--922, 2011.

\bibitem[Fri64]{friedman}
A.~Friedman.
\newblock {\em Partial differential equations of parabolic type}.
\newblock Prentice Hall, Inc., 1964.

\bibitem[GP79]{gibbons}
G.~W. Gibbons and C.~N. Pope.
\newblock \emph{The positive action conjecture and asymptotically Euclidean
  metrics in quantum gravity}.
\newblock {\em \emph{Comm. Math. Phys.}}, \textbf{66}(3):267--290, 1979.

\bibitem[GZ02]{cohomogeneity1}
K.~Grove and W.~Ziller.
\newblock \emph{Cohomogeneity one manifolds with positive {R}icci curvature}.
\newblock {\em \emph{Invent. Math.}}, \textbf{149}(3):619--646, 2002.

\bibitem[GZ08]{type2}
H.-L. Gu and X.-P. Zhu.
\newblock \emph{The existence of type-II Singularities for the Ricci Flow on
  $S^{n+1}$}.
\newblock {\em \emph{Comm. Anal. Geom.}}, \textbf{16}(3):467--494, 2008.

\bibitem[Ham82]{threemanifolds}
R.~S. Hamilton.
\newblock \emph{Three-manifolds with positive {R}icci curvature}.
\newblock {\em \emph{J. Differential Geom.}}, \textbf{17}(2):255--306, 1982.

\bibitem[Ham86]{positivecurvature}
R.~S. Hamilton.
\newblock \textit{Four-manifolds with positive curvature operator}.
\newblock {\em \emph{J. Differential Geom.}}, \textbf{24}(2):153--179, 1986.

\bibitem[Ham93a]{eternal}
R.~S. Hamilton.
\newblock \emph{Eternal solutions to the {R}icci flow}.
\newblock {\em \emph{J. Differential Geom.}}, \textbf{38}(1):1--11, 1993.

\bibitem[Ham93b]{harnack}
R.~S. Hamilton.
\newblock \emph{The Harnack estimate for the Ricci flow}.
\newblock {\em \emph{J. Differential Geom.}}, \textbf{37}(1):225--243, 1993.

\bibitem[Ham95]{formationsingularities}
R.~S. Hamilton.
\newblock \emph{The formation of singularities in the {R}icci flow.}
\newblock {\em \emph{Surveys in Differential Geometry}}, \textbf{2}:7--136,
  1995.

\bibitem[Haw77]{hawking}
S.~W. Hawking.
\newblock \emph{Gravitational instantons}.
\newblock {\em \emph{Phys. Lett.}}, \textbf{60}A(2):81--83, 1977.

\bibitem[IK{\v{S}}16]{IKS1}
J.~Isenberg, D.~Knopf, and N.~{\v{S}}e{\v{s}}um.
\newblock \emph{{R}icci flow neckpinches without rotational symmetry}.
\newblock {\em \emph{Comm. Part. Diff. Eq.}}, \textbf{41}(12):1860--1894, 2016.

\bibitem[IK{\v{S}}19]{IKS2}
J.~Isenberg, D.~Knopf, and N.~{\v{S}}e{\v{s}}um.
\newblock \emph{Non-{K}\"{a}hler {R}icci flow singularities that converge to
  {K}\"{a}hler-{R}icci solitons}.
\newblock {\em \emph{arXiv preprint arXiv:1703.02918v5}}, 2019.

\bibitem[Kot08]{kotschwar}
B.~Kotschwar.
\newblock \emph{On rotationally invariant shrinking {R}icci solitons}.
\newblock {\em \emph{Pacific J. Math.}}, \textbf{236}(1):73--88, 2008.

\bibitem[Laf15]{lafuente}
R.~Lafuente.
\newblock \emph{Scalar Curvature Behavior of Homogeneous {R}icci Flows}.
\newblock {\em \emph{J. Geom. Anal.}}, \textbf{25}, 12 2015.

\bibitem[LZ18]{brendle2}
X.~Li and Y.~Zhang.
\newblock \emph{Ancient solutions to the {R}icci flow in higher dimensions}.
\newblock {\em \emph{arXiv preprint arXiv:1812.04156}}, 2018.

\bibitem[Nab10]{naber}
A.~Naber.
\newblock \emph{Noncompact shrinking four solitons with nonnegative curvature}.
\newblock {\em \emph{J. Reine Angew. Math.}}, \textbf{2010}(645):125--153,
  2010.

\bibitem[Nom60]{nomizu}
K.~Nomizu.
\newblock \emph{On Local and Global Existence of {K}illing Vector Fields}.
\newblock {\em \emph{Ann. of Math.}}, \textbf{72}(1):105--120, 1960.

\bibitem[OW07]{woolgar}
T.~Oliynyk and E.~Woolgar.
\newblock \emph{Rotationally symmetric {R}icci flow on asymptotically flat
  manifolds.}
\newblock {\em \emph{Comm. Anal. Geom.}}, \textbf{15}(3):535--568, 2007.

\bibitem[Per02]{pseudolocality}
G.~Perelman.
\newblock \emph{The entropy formula for the {R}icci flow and its geometric
  applications}.
\newblock {\em \emph{arXiv preprint math/0211159}}, 2002.

\bibitem[PW12]{protter}
M.~H. Protter and H.~F. Weinberger.
\newblock {\em Maximum principles in differential equations}.
\newblock Springer Science \& Business Media, 2012.

\bibitem[Shi89]{shi}
W.-X. Shi.
\newblock \emph{Deforming the metric on complete Riemannian manifolds}.
\newblock {\em \emph{J. Differential Geom.}}, \textbf{30}(1):223--301, 1989.

\bibitem[Son16]{song2}
A.~Song.
\newblock \emph{Appearance of stable minimal spheres along the {R}icci flow in
  positive scalar curvature}.
\newblock {\em \emph{arXiv preprint arXiv:1611.09747}}, 2016.

\bibitem[Sto19]{stolarski}
M.~Stolarski.
\newblock \emph{Curvature blow-up in doubly-warped product metrics evolving by
  {R}icci flow}.
\newblock {\em \emph{arXiv preprint arXiv:1905.00087}}, 2019.

\bibitem[Wu14]{wu}
H.~Wu.
\newblock \emph{On Type-II Singularities in Ricci Flow on $\mathbb{R}^{N}$}.
\newblock {\em \emph{Comm. Part. Diff. Eq.}}, \textbf{39}(11):2064--2090, 2014.

\bibitem[Zha08]{zhang}
Z.-H. Zhang.
\newblock \emph{Gradient Shrinking Solitons with Vanishing {W}eyl Tensor}.
\newblock {\em \emph{Pacific J. Math.}}, \textbf{242}, 2008.

\end{thebibliography}
\end{document}